\newcommand{\CC}{{\mathcal C}}
\newcommand{\FF}{{\mathcal F}}
\newcommand{\XX}{{\mathcal X}}
\newcommand{\C}{{\mathbb C}}
\newcommand{\N}{{\mathbb N}}
\newcommand{\R}{{\mathbb R}}
\newcommand{\supp}{{\operatorname{supp}}}
\newcommand{\Tr}{{\operatorname{t}}}
\newcommand{\Ran}{{\operatorname{rn}}}
\newcommand{\Ker}{{\operatorname{kr}}}
\newtheorem{teo}{Theorem}[section]
\newtheorem{lema}[teo]{Lemma}
\newtheorem{coro}[teo]{Corollary}
\newtheorem{propo}[teo]{Proposition}
\newtheorem{ques}[teo]{Question}
\theoremstyle{remark} \newtheorem{remark}[teo]{Remark}}
\title[Shell interactions for Dirac operators: point spectrum and confinement]{Shell interactions for Dirac operators:\\on the point spectrum and the confinement}
\author[N. Arrizabalaga, A. Mas, L. Vega]{Naiara Arrizabalaga, Albert Mas, Luis Vega}
\date{July, 2014}
\subjclass[2010]{Primary 81Q10, Secondary 35Q40.} 
\keywords{Dirac operator, self-adjoint extension, shell interaction,  singular integral.}
\thanks{Arrizabalaga was supported in part by MTM2011-24054 and IT641-13. Mas was partially supported by MTM2010-16232, 2009SGR-000420, IT-641-13, and ERC-320501 (ANGEOM project).  Vega was partially supported by MTM2011-24054, UFI11/52 and IT641-13.}
\address{N. Arrizabalaga and A. Mas. Departamento de Matem\'aticas, Universidad del Pa\'is Vasco/Euskal Herriko Unibertsitatea, 48080 Bilbao (Spain)}
\email{naiara.arrizabalaga@ehu.es, amasblesa@gmail.com}
\address{L. Vega. Departamento de Matem\'aticas, Universidad del Pa\'is Vasco/Euskal Herriko Unibertsitatea, 48080 Bilbao and BCAM, Alameda de Mazarredo, 14
E-48009 Bilbao (Spain)} 
\email{luis.vega@ehu.es, lvega@bcamath.org}
\begin{document}

\begin{abstract}
Spectral properties and the confinement phenomenon for the coupling $H+V$ are studied, where $H=-i\alpha\cdot\nabla
+m\beta$ is the free Dirac operator in $\R^3$ and $V$ is a measure-valued potential. The potentials $V$ under consideration are given in terms of surface measures on the boundary of bounded regular domains in $\R^3$.

A criterion for the existence of point spectrum is given, with applications to electrostatic shell potentials.
In the case of the sphere, an uncertainty principle is developed and its relation with some eigenvectors of the coupling is shown.

Furthermore, a criterion for generating confinement is given. As an application, some known results about confinement on the sphere for electrostatic plus Lorentz scalar shell potentials are generalized to regular surfaces. 
\end{abstract}
\maketitle

\section{Introduction}

In this article we study the coupling of the free Dirac operator with measure-valued potentials developed in \cite{AMV}. The aim of the present work is to give spectral properties of these couplings and to investigate the phenomenon of confinement.
Given $m\geq0$, the free Dirac operator in $\R^3$ is defined by 
$H=-i\alpha\cdot\nabla+m\beta,$
where $\alpha=(\alpha_1,\alpha_2,\alpha_3)$,
\begin{equation}\label{freedirac}
\begin{split}
\alpha_j
=&\left(\begin{array}{cc} 0 & \sigma_j\\
\sigma_j & 0 \end{array}\right) 
\quad\text{for }j=1,2,3,\quad
\beta
=\left(\begin{array}{cc} I_2 & 0\\
0 & -I_2 \end{array}\right),\quad
I_2
=\left(\begin{array}{cc} 1 & 0\\
0 & 1 \end{array}\right),\\
&\text{and}\quad
\sigma_1
=\left(\begin{array}{cc} 0 & 1\\
1 & 0 \end{array}\right), 
\quad\sigma_2
=\left(\begin{array}{cc} 0 & -i\\
i & 0 \end{array}\right), 
\quad\sigma_3
=\left(\begin{array}{cc} 1 & 0\\
0 & -1 \end{array}\right)
\end{split}
\end{equation}
is the family of {\em Pauli matrices}. Although one can take $m=0$ in the definition of $H$, throughout this article we always assume $m>0$. 

The work \cite{AMV} contains some results concerning $H+V$ for quite general singular measures $\sigma$ in $\R^3$ and suitable $L^2(\sigma)^4$-valued potentials $V$. However, in that article, most of the interest is focused 
on the case that $\sigma$ is the surface measure on the boundary of a bounded regular domain in $\R^3$, and this is our setting for the present paper. In order to make an 	
understandable exposition of the main results, let us first recall the basic ideas used in \cite{AMV}. 

The ambient Hilbert space is
$L^2(\R^3)^4$ with respect to the Lebesgue measure, and $H$ is defined
in the sense of distributions. Given a bounded regular domain $\Omega_+\subset\R^3$ with boundary $\Sigma$ and surface measure $\sigma$, in \cite{AMV} we find domains $D\subset
L^2(\R^3)^4$ in which $H+V:D\to L^2(\R^3)^4$ is an unbounded self-adjoint operator, where $V$ is a suitable $L^2(\sigma)^4$-valued potential. The construction of $D$ relies on the following simple argument:
by assumption, $V$ is $L^2(\sigma)^4$-valued. Thus, given $\varphi\in D$, we can write $V(\varphi)=-g$ in the sense of distributions for some $g\in
L^2(\sigma)^4$. Moreover, since $(H+V)(\varphi)
\in L^2(\R^3)^4$, we can also write $(H+V)(\varphi)=G$ for some $G\in L^2(\R^3)^4$. Therefore, $H(\varphi)=G+g$ in the sense of distributions, and so $\varphi$ should be the
convolution $\phi*(G+g)$, where $\phi$ is a fundamental solution of $H$. In particular,
\begin{equation}\label{intro 1}
\begin{split}
D\subset\{\varphi&=\phi*(G+g): G\in L^2(\R^3)^4,\,g\in L^2(\sigma)^4\}\quad\text{and}\\
&V(\varphi)=-g\quad\text{for all }\varphi=\phi*(G+g)\in D.
\end{split}
\end{equation}
To guarantee that $H+V$ is self-adjoint on $D
$, in \cite{AMV} we impose some relations between $G$ and $g$ with the aid of bounded self-adjoint operators 
$\Lambda:L^2(\sigma)^4\to L^2(\sigma)^4$. That is to say, given suitable $\Lambda$'s, following (\ref{intro 1}) we find domains $D_\Lambda$ where $H+V$ is self-adjoint. 

We consider the potential $V$ given by (\ref{intro 1}) as a ``generic'' potential since it seems to be prescribed from the begining, i.e., $V(\varphi)=-g$ for all $\varphi=\phi*(G+g)\in D_\Lambda$, so $V$ is independent of $\Lambda$.  
For an a priori given potential $V_\sigma$, the key idea in \cite{AMV} to construct a domain where $H+V_\sigma$ is self-adjoint is to find a precise bounded self-adjoint operator $\Lambda_\sigma$ so that $V_\sigma(\varphi)=V(\varphi)$ for all $\varphi\in D_{\Lambda_\sigma}$, and the self-adjointness of $H+V_\sigma$ on $D_{\Lambda_\sigma}$ follows directly from the one of $H+V$. 

The generic potential $V$ given by (\ref{intro 1}) reflects the following rough idea:
if we know that the gradient of a function $\varphi$ has an absolutely continuous part $G$ and a singular part $g$ supported on $\Sigma$ (in our setting, $V(\varphi)\in L^2(\sigma)^4$ and $(H+V)(\varphi)
\in L^2(\R^3)^4$), then $\varphi$ must have a jump across $\Sigma$, and this jump completely determines the singular part of the gradient (in our setting, the jump determines the value $V(\varphi)$). For a given potential $V_\sigma$, one manages to define a suitable domain $D$ such that, for any $\varphi\in D$, the singular part which comes from the gradient on the jump of $\varphi$ across $\Sigma$ agrees with $-V_\sigma(\varphi)$.

As outlined above, in \cite{AMV} we show that any $\varphi=\phi*(G+g)$ has non-tangential boundary values $\varphi_\pm\in L^2(\sigma)^4$ when we approach to $\Sigma$ from $\Omega_\pm$, where $\Omega_-=\R^3\setminus\overline{\Omega_+}$. This enables us to consider, for example, the electrostatic shell potential 
$$V_\lambda(\varphi)=\frac{\lambda}{2}(\varphi_++\varphi_-)$$
for $\lambda\in\R$. Following the argument above, we construct a domain $D_\lambda$ where $H+V_\lambda$ is self-adjoint for all $\lambda\neq\pm2$. Other similar potentials are also treated in \cite{AMV}.

At this point a remark is in order. In \cite{Posi1} (see also \cite[Section 2]{Posi2}) the author provides, in a very general framework, all self-adjoint extensions of symmetric operators obtained by restricting a self-adjoint operator $A$ with domain $D(A)$ to a dense (and closed with respect to the graph norm) subspace $N\subset D(A)$. Typically, $A$ is a differential operator and $N$ is the kernel of some trace operator along a null set. In particular, \cite{Posi1} can be used to provide all the domains where $H+V$ is self-adjoint and, in this sense, some of the results in \cite{AMV} follow from the ones in \cite{Posi1}. However, as regards applications, we make use of some explicit layer potentials on $\Sigma$ derived from our approach in \cite{AMV}  which turn out to be very fruitful both in the description of the domains and in the study of the properties of the couplings under consideration.

Concerning the results in this article, in Section \ref{s spec} we show a general criterion for the existence of eigenvalues in $(-m,m)$ for $H+V$, namely Proposition \ref{spec p1}, which has strong connections with \cite[Theorem 2.5]{Posi2}. This criterion relates the existence of eigenvalues, which is a problem in the whole $\R^3$, with a spectral property of certain bounded operators in $L^2(\sigma)^4$, which is a problem settled exclusively on $\Sigma$. 
Afterwards, we show some applications to the case of electrostatic shell potentials $V_\lambda$. In particular, Theorem \ref{spec t1} shows that $H+V_\lambda$ and $H+V_{-4/\lambda}$ have the same eigenvalues in $(-m,m)$ for all $\lambda\neq0$, which is a special case of the more general isospectral transformation, and that $H+V_\lambda$ has no eigenvalues in $(-m,m)$ if $|\lambda|$ is too big or too small. This is an interesting feature, since it shows that there are lower and upper thresholds on 
the possible values of $|\lambda|$ in order to have non-trivial eigenvalues in $(-m,m)$ for $H+V_\lambda$, unlike what happens to the coupling of $H$ with similar potentials (see Remark \ref{threshold}).
Theorem \ref{symm a -a} is another consequence of the general criterion, where we show that, under a symmetry assumption on $\sigma$, $H+V_\lambda$ has an eigenvalue $a\in(-m,m)$ if and only if $H+V_{-\lambda}$ has $-a$ as an eigenvalue. For completeness, we also show in Theorem \ref{spec t2} that if $\Omega_-$ is connected then $H+V_\lambda$ has no eigenvalues in $\R\setminus[-m,m]$.

Section \ref{s sphere} is devoted to the spectral study of $H+V_\lambda$ when $\Sigma$ is the sphere $$S^2=\{x\in\R^3: |x|=1\}.$$ Our interest is to characterize the eigenvalues finding sharp constants of some appropriately chosen inequalities. In principle, this is far from obvious for Dirac hamiltonians because they are not semibounded operators. However, this procedure has been successfully used in  \cite{DDEV} with Hamiltonians with coulombic singularities. In that case, the sharp constants appear as strictly positive lower bounds for the absolute value of the commutator of two operators, being $H$ one of them. Therefore, this approach can be seen as another use of the uncertainty principle. In the current article we proceed in the same vein and we give an uncertainty principle that concerns some $L^2(\sigma)^2$ bounded operators also related to $H$. In Theorem \ref{thmineq} we obtain a sharp inequality on $S^2$ which turns out to be connected to the eigenvalues of $H+V_\lambda$. For its proof, we strongly use the classical spherical harmonics, so a generalization to other surfaces $\Sigma$ seems an interesting and challenging question. As a consequence of the above-mentioned inequality, we recover a well-known sharp lower bound for the $2$-dimensional Riesz transform on the sphere (see Corollary \ref{riesz} and \cite{HMMPT}). In Lemma \ref{eigenfunction} we provide a specific criterion (based on Proposition \ref{spec p1}) to generate eigenvectors of $H+V_\lambda$. Section \ref{further com} contains some comments on the relation between the uncertainty principle and the eigenvectors of  $H+V_\lambda$, positive results on the existence of eigenvalues, and an open question (as far as we know) and some consequences of an affirmative answer.

Finally, in Section \ref{s confinement} we deal with the confinement phenomenon on regular surfaces. Roughly speaking, one says that an $L^2(\sigma)^4$-valued potential $V$ generates confinement with respect to $H$ and $\Sigma$ if the particles modelized by the evolution operator associated to $H+V$ never cross $\Sigma$ over time. That is, if a function $u(x,t)$ verifies the equation $\partial_t u+i(H+V)u=0$ and $u(\cdot,0)$ is supported in $\Omega_\pm$, that $V$ generates confinement means that $u(\cdot,t)$ is supported in $\Omega_\pm$ for all $t\in\R$, so $\Sigma$ becomes impenetrable for the particles. Similarly to Section \ref{s spec}, in Section \ref{s confinement} we first show a general criterion on $H+V$ to generate confinement, namely Theorem \ref{conf t1}. This criterion is stated in terms of an algebraic property of certain bounded operators in $L^2(\sigma)^4$, so as before we convert a problem in the whole $\R^3$ to a problem exclusively settled in $\Sigma$. An application to electrostatic and Lorentz scalar shell potentials is also shown. In particular, in Theorem \ref{conf appl} we prove that, for $\lambda_e,\lambda_s\in\R$, the potential
$$V_{es}(\varphi)=\frac{1}{2}\,(\lambda_e+\lambda_s\beta)(\varphi_++\varphi_-)$$
generates confinement if and only if $\lambda_e^2-\lambda_s^2=-4$. This was already known for the case of the sphere (see \cite{Dittrich}), and we generalize it to sufficiently regular surfaces.
For the reader only interested on confinement, we mention that Section \ref{s confinement} can be read independently of Sections \ref{s spec} and \ref{s sphere}. Finally, we also want to recall \cite{Posi3}, where the authors studied the confinement phenomenon for singular perturbations of general self-adjoint Hamiltonian operators.

It is worth mentioning that, although all the applications in this article are concerned to the potentials $V_\lambda$ and $V_{es}$ above-mentioned, the general criteria stated in Proposition \ref{spec p1} and Theorem \ref{conf t1} can be used as a first step to study the spectrum and confinement for the coupling of $H$ with other shell potentials. In a sense, once a potential $V_\sigma$ is given, one must find the suitable operator $\Lambda$ (mentioned in the beginning of the introduction) so that $V_\sigma(\varphi)=V(\varphi)$ for all $\varphi\in D_\Lambda$, and then one must check the criteria for that specific $\Lambda$.

\section{Preliminaries}\label{s preliminaries}
Since this article is a continuation of the study developed in \cite{AMV}, we assume that the reader is familiar with the notation, methods and results in there. However, in this section we recall some basic rudiments for the sake of completeness.

Given a positive Borel measure $\nu$ in $\R^3$, set
$$L^2(\nu)^4=\left\{f:\R^3\to\C^4\text{ $\nu$-measurable}:\, 
\int|f|^2\,d\nu<\infty\right\},$$
and denote by $\langle\cdot,\cdot\rangle_{\nu}$ and $\|\cdot\|_\nu$ the standard scalar product and norm in $L^2(\nu)^4$, i.e., 
$\langle f,g\rangle_\nu=\int f\cdot\overline g\,d\nu$ and 
$\|f\|^2_\nu=\int|f|^2\,d\nu$ for $f,g\in L^2(\nu)^4$.  We write $I_4$ or $1$ interchangeably to denote the identity operator on $L^2(\nu)^4$.  We say that $\nu$ is $d$-dimensional if there exists $C>0$ such that $\nu(B(x,r))\leq Cr^d$ for all $x\in\R^3$, $r>0$. We denote by $\mu$ the Lebesgue measure in $\R^3$.

Let $\Sigma$ be the boundary of a bounded Lipschitz domain $\Omega_+\subset\R^3$, let $\sigma$ and $N$ be the surface measure and outward unit normal vector field on $\Sigma$ respectively, and set $\Omega_-=\R^3\setminus\overline{\Omega_+}$, so $\Sigma=\partial\Omega_\pm$. Note that $\sigma$ is 2-dimensional. Since we are not interested in optimal regularity assumptions, for the sequel we assume that $\Sigma$ is of class $\CC^2$ (see Remark \ref{r regularity} for the Lipschitz case).
Finally, we introduce the auxiliary space of measures $$\XX=\left\{G\mu+g\sigma:\,G\in L^2(\mu)^4,\, g\in L^2(\sigma)^4\right\}.$$

Observe that $H$, which is symmetric and initially defined in $\CC^\infty_c(\R^3)^4$ ($\C^4$-valued functions in $\R^3$ which are $\CC^\infty$ and with compact support), can be extended by duality to the space of distributions with respect to the test space $\CC^\infty_c(\R^3)^4$ and, in particular, it can be defined on $\XX$.
The following lemma is concerned with the resolvent of $H$, which will be very useful for the results below. As usual, we denote by $\overline{(\phi^a)^t}$ the complex conjugate of the transpose of $\phi^a$, that is, $((\phi^a)^t)_{\,j,k}=\phi^a_{k,j}$ and $(\overline{\phi^a})_{j,k}=\overline{\phi^a_{j,k}}$ for all $1\leq j,k\leq 4$.
\begin{lema}\label{resolvent}
Given $a\in(-m,m)$, a fundamental solution of $H-a$ is given by 
$$\phi^a(x)=\frac{e^{-\sqrt{m^2-a^2}|x|}}{4\pi|x|}\left(a+m\beta
+\left(1+\sqrt{m^2-a^2}|x|\right)\,i\alpha\cdot\frac{x}{|x|^2}\right)\quad\text{for }x\in\R^3\setminus\{0\},$$
i.e., $(H-a)\phi^a=\delta_0 I_4$ in the sense of distributions, where $\delta_0$ denotes the Dirac measure on $0$. Furthermore, $\phi^a$ satisfies $(i)$, $(ii)$, and $(iii)$ of \cite[Section 2.2]{AMV}, that is,
\begin{itemize}
\item[$(i)$] $\phi^a_{j,k}\in\CC^\infty(\R^3\setminus\{0\})$ for all $1\leq j,k\leq 4$,
\item[$(ii)$] $\phi^a(x-y)=\overline{(\phi^a)^t}(y-x)$ for all $x,y\in\R^3$ such that $x\neq y$, 
\item[$(iii)$] there exist $\gamma,\delta>0$ such that
\begin{itemize}
\item[$(a)$] $\sup_{1\leq j,k\leq 4}|\phi^a_{j,k}(x)|\leq C|x|^{-2}$ for all $|x|<\delta$,
\item[$(b)$] $\sup_{1\leq j,k\leq 4}|\phi^a_{j,k}(x)|\leq Ce^{-\gamma|x|}$ for all $|x|>1/\delta$,
\item[$(c)$] $\sup_{1\leq j,k\leq 4}\,
\sup_{\xi\in\R^3}(1+|\xi|^2)^{1/2}|\FF(\phi^a_{j,k})(\xi)|<\infty,$ where $\FF$ denotes the Fourier transform in $\R^3$.
\end{itemize}
\end{itemize}
\end{lema}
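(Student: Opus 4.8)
The plan is to obtain $\phi^a$ by factoring $H-a$. From the explicit matrices in (\ref{freedirac}) one reads off the anticommutation relations $\alpha_j\alpha_k+\alpha_k\alpha_j=2\delta_{jk}I_4$, $\alpha_j\beta+\beta\alpha_j=0$ and $\beta^2=I_4$, which give $H^2=(-\Delta+m^2)I_4$ in the sense of distributions. Since $a$ is a real scalar,
$$(H-a)(H+a)=(H+a)(H-a)=H^2-a^2I_4=(-\Delta+k^2)I_4,\qquad k:=\sqrt{m^2-a^2}>0.$$
Now recall that the Yukawa kernel $E_k(x):=e^{-k|x|}/(4\pi|x|)$ is the fundamental solution of $-\Delta+k^2$ in $\R^3$, i.e. $(-\Delta+k^2)E_k=\delta_0$. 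Hence the natural candidate is $\phi^a:=(H+a)(E_kI_4)$, and the first assertion reduces to two things: checking that $(H+a)(E_kI_4)$ is exactly the matrix displayed in the statement, and justifying the distributional identity $(H-a)\phi^a=(H-a)(H+a)(E_kI_4)=(-\Delta+k^2)E_k\,I_4=\delta_0I_4$.

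For the first of these I would simply apply $-i\alpha\cdot\nabla+m\beta+a$ to $E_kI_4$. A direct computation gives $\nabla E_k(x)=-e^{-k|x|}(1+k|x|)\,x/(4\pi|x|^3)$, so that
$$-i\alpha\cdot\nabla(E_kI_4)=\frac{e^{-k|x|}}{4\pi|x|}\,(1+k|x|)\,i\alpha\cdot\frac{x}{|x|^2},$$
and adding $(m\beta+a)E_kI_4$ reproduces precisely the stated formula. The distributional step is a matter of bookkeeping: $E_kI_4$ and $\phi^a$ are locally integrable on $\R^3$ (the only singularity, at the origin, is $O(|x|^{-2})$, which is integrable in dimension $3$) and decay exponentially, hence are tempered distributions, and the constant-coefficient operators $H-a$ and $H+a$ act on and compose over distributions in the obvious way, which makes all the manipulations above legitimate. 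This is the same scheme used for the fundamental solution of $H$ (the case $a=0$) in \cite{AMV}.

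It remains to verify $(i)$--$(iii)$. Item $(i)$ is immediate, since away from the origin the entries of $\phi^a$ are sums of products of $e^{-k|x|}$, $|x|^{-1}$ and $x_\ell|x|^{-2}$, all of class $\CC^\infty$ there. For $(ii)$ it suffices to show $\phi^a(z)=\overline{(\phi^a)^t}(-z)$ for $z\neq0$, equivalently that the conjugate transpose of $\phi^a(z)$ equals $\phi^a(-z)$; this holds because the scalar prefactor $e^{-k|z|}/(4\pi|z|)$ is real and depends only on $|z|$, $a$ is real, the matrices $\beta$ and $\alpha_\ell$ are Hermitian, and $i\alpha\cdot z/|z|^2$ is anti-Hermitian and odd in $z$, so conjugate-transposing has the same effect as replacing $z$ by $-z$. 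The bounds $(iii)(a)$ and $(iii)(b)$ are elementary estimates on the explicit formula: near $0$ the factor $|x|^{-1}$ multiplies a term that is itself $O(|x|^{-1})$, giving $O(|x|^{-2})$; for large $|x|$ the exponential $e^{-k|x|}$ absorbs the at most linear growth of the remaining factors, giving $Ce^{-\gamma|x|}$ for any $0<\gamma<k$. Finally, for $(iii)(c)$ I would take Fourier transforms in $\phi^a=(H+a)(E_kI_4)$: since $\FF$ turns $-i\alpha\cdot\nabla+m\beta+a$ into multiplication by a symbol of the form (a constant times) $\alpha\cdot\xi+m\beta+a$, and $\FF(E_k)$ is a constant multiple of $(|\xi|^2+k^2)^{-1}$, each entry of $\FF(\phi^a)(\xi)$ is $O\big((1+|\xi|)(|\xi|^2+k^2)^{-1}\big)=O\big((1+|\xi|^2)^{-1/2}\big)$, which is exactly $(iii)(c)$.

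I do not expect any genuine obstacle here: the heart of the matter is the algebraic identity $(H-a)(H+a)=-\Delta+k^2$ together with the classical Yukawa kernel, and $(i)$--$(iii)$ are routine once the explicit formula is at hand. The only points demanding a little care are the distributional justification of the factorization (local integrability of $E_k$ and $\phi^a$) and the tracking of normalization constants in the Fourier transform for $(iii)(c)$; neither is serious, and both are handled exactly as in \cite{AMV}.
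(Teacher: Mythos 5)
Your proposal is correct and follows exactly the paper's own route: factor $(H-a)(H+a)=(-\Delta+m^2-a^2)I_4$, apply $H+a$ to the Yukawa kernel $e^{-\sqrt{m^2-a^2}|x|}/(4\pi|x|)$ to obtain $\phi^a$, and then verify $(i)$--$(iii)$ directly from the explicit formula. The extra detail you supply on $(i)$--$(iii)$ matches what the paper delegates to the analogous Lemma 3.1 of \cite{AMV}.
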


\begin{proof}
It is well known that $\psi(x)=e^{-\sqrt{m^2-a^2}|x|}(4\pi|x|)^{-1}$ is a fundamental solution of the Helmhotz operator $-\Delta+m^2-a^2$ in $\R^3$. Note that $H^2=(-\Delta+m^2)I_4$, so
$$(H-a)(H+a)=H^2-a^2=(-\Delta+m^2-a^2)I_4,$$ and if we set $\phi^a(x)=(H+a)(\psi(x)I_4)$ then $(H-a)\phi^a=\delta_0I_4$ in the sense of distributions. The explicit formula for $\phi^a$ follows by a straightforward computation. The rest of the proof is analogous to \cite[Lemma 3.1]{AMV}.
\end{proof}

Given a positive Borel measure $\nu$ in $\R^3$, $f\in L^2(\nu)^4$, and $x\in\R^3$, we set $$(\phi^a*f\nu)(x)=\int\phi^a(x-y)f(y)\,d\nu(y),$$ whenever the integral makes sense. Actually, by Lemma \ref{resolvent} and \cite[Lemma 2.1]{AMV}, 
if $\nu$ is a $d$-dimensional measure in $\R^3$ with $1<d\leq 3$, then there exists some constant $C>0$ such that $\|\phi^a*g\nu\|_{\mu}\leq C\|g\|_{\nu}$ for all $g\in L^2(\nu)^4$.

In what follows we use a non standard notation, $\Phi^a$, to define the convolution of measures in $\XX$ with the fundamental solution of $H-a$, $\phi^a$. Capital letters, as $F$ or $G$, in the argument of $\Phi^a$ denote elements of $L^2(\mu)^4$, and the lowercase letters, as $f$ or $g$, denote elements in  $L^2(\sigma)^4$. Despite that this notation is non standard, it is very convenient in order to shorten the forthcoming computations. 

Given $G\mu+g\sigma\in\XX$, we define $$\Phi^a(G+g)=
\phi^a*G\mu+\phi^a*g\sigma.$$
Then, the above inequality shows that 
$\|\Phi^a(G+g)\|_\mu\leq C(\|G\|_\mu+\|g\|_\sigma)$ for some constant $C>0$ and all $G\mu+g\sigma\in\XX$, so $\Phi^a(G+g)\in L^2(\mu)^4$. Moreover, following \cite[Section 2.3]{AMV} one can show that 
$(H-a)(\Phi^a(G+g))=G\mu+g\sigma$ in the sense of distributions. This allows us to define a ``generic'' potential $V$ acting on functions $\varphi=\Phi^a(G+g)$ by
\begin{equation*}
V(\varphi)= -g\sigma,
\end{equation*}
so that $(H-a+V)(\varphi)=G\mu$
in the sense of distributions. For simplicity of notation, we write 
$(H-a+V)(\varphi)=G\in L^2(\mu)^4$. 

In order to construct a domain of definition where $H+V$ is self-adjoint, in \cite{AMV} we had to consider the trace operator on $\Sigma$.  
For $G\in\CC_c^\infty(\R^3)^4$, one defines the trace operator on $\Sigma$ by $\Tr_\Sigma(G)=G\chi_{\Sigma}$. Then, $\Tr_\Sigma$ extends to a bounded linear operator $$\Tr_\sigma:W^{1,2}(\mu)^4\to L^2(\sigma)^4$$ (see \cite[Proposition 2.6]{AMV}, for example), where $W^{1,2}(\mu)^4$ denotes the Sobolev space of $\C^4$-valued functions such that all its components have all their derivatives up to first order in $L^2(\mu)$.  From Lemma \ref{resolvent}, we have
$\|\Phi^a(G)\|_{W^{1,2}(\mu)^4}\leq C\|G\|_{\mu}$ for some $C>0$ and all $G\in L^2(\mu)^4$ (see \cite[Lemma 2.8]{AMV}), thus we can define $$\Phi_\sigma^a(G)=\Tr_\sigma(\Phi^a(G))=\Tr_\sigma(\phi^a*G\mu)$$ and it satisfies
$\|\Phi_\sigma^a(G)\|_\sigma\leq C\|G\|_\mu$ for all $G\in L^2(\mu)^4$. 
Note that, for $a=0$, the above definitions recover the ones in \cite[Section 2.3]{AMV}.

The next lemma, which is a generalization of \cite[Lemma 3.3]{AMV}, will be used in the sequel.
\begin{lema}\label{l jump}
Given $g\in L^2(\sigma)^4$, $x\in\Sigma$ and $a\in\R$, set 
\begin{equation*}
\begin{split}
C_\sigma^a (g)(x)=\lim_{\epsilon\searrow0}\int_{|x-z|>\epsilon}\phi^a(x-z)g(z)\,d\sigma(z) 
\quad\text{and}\quad
C_{\pm}^a(g)(x)=\lim_{\Omega_{\pm}\ni y\stackrel{nt}{\longrightarrow} x}
\Phi^a(g)(y),
\end{split}
\end{equation*}
where $\textstyle{\Omega_{\pm}\ni y\stackrel{nt}{\longrightarrow} x}$ means that $y\in\Omega_{\pm}$ tends to $x\in\Sigma$ non-tangentially. Then $C_\sigma^a$ and $C_\pm^a$ are bounded linear operators in $L^2(\sigma)^4$. Moreover, the following  holds:
\begin{itemize}
\item[$(i)$] $C_\pm^a =\mp\frac{i}{2}\,(\alpha\cdot N)+C_\sigma^a$ (Plemelj--Sokhotski jump formulae),
\item[$(ii)$] $-4(C_\sigma^a(\alpha\cdot N))^2=-4((\alpha\cdot N)C_\sigma^a)^2=I_4$ for $a\in[-m,m]$.
\end{itemize}
\end{lema}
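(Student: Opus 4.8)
The plan is to deduce the boundedness assertions and the Plemelj--Sokhotski formulae $(i)$ from their $a=0$ versions in \cite[Lemma 3.3]{AMV}, and then to obtain $(ii)$ from $(i)$ via a reproducing identity produced by applying $H-a$ to the one-sided restrictions of $\Phi^a(g)$. For the first step, write $\phi^a=\phi^0+R^a$. Expanding $e^{-\sqrt{m^2-a^2}|x|}$ around $x=0$ in the explicit formula of Lemma \ref{resolvent}, the kernels $\phi^a$ and $\phi^0$ have the same leading singularity $\tfrac{i}{4\pi}\,\alpha\cdot x/|x|^3$, so $R^a(x)=O(|x|^{-1})$ as $x\to0$; together with $(iii)(b)$ of Lemma \ref{resolvent}, the kernel $R^a(x-z)$ is then uniformly integrable in each variable over the $2$-dimensional surface $\Sigma$, and so it defines a bounded (indeed compact) operator $T_{R^a}$ on $L^2(\sigma)^4$ whose single-layer potential is continuous across $\Sigma$. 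Hence $C_\sigma^a=C_\sigma^0+T_{R^a}$ and $C_\pm^a=C_\pm^0+T_{R^a}$ are bounded, and $(i)$ follows from the $a=0$ case. (Equivalently, one may repeat the proof of \cite[Lemma 3.3]{AMV} verbatim, since by Lemma \ref{resolvent} the kernel $\phi^a$ satisfies $(i)$--$(iii)$ of \cite[Section 2.2]{AMV} and solves $(H-a)\phi^a=\delta_0 I_4$.)

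For $(ii)$, note first that $(\alpha\cdot N)^2=I_4$ as an operator on $L^2(\sigma)^4$, because $N$ is a unit vector field and $\alpha_j\alpha_k+\alpha_k\alpha_j=2\delta_{jk}I_4$ (read off from \eqref{freedirac}). Multiplying by $\alpha\cdot N$ on the left, resp.\ on the right, and using $(\alpha\cdot N)^2=I_4$, one sees that both identities in $(ii)$ are equivalent to the single one
\[
C_\sigma^a\,(\alpha\cdot N)\,C_\sigma^a=-\tfrac14\,(\alpha\cdot N)\qquad\text{on }L^2(\sigma)^4 .
\]
I would derive this by inserting $C_+^a=C_\sigma^a-\tfrac i2(\alpha\cdot N)$ into the reproducing relation $i\,C_+^a(\alpha\cdot N)C_+^a=C_+^a$ and expanding: using $(i)$ and $(\alpha\cdot N)^2=I_4$, the cross terms collapse to leave exactly the displayed identity. (Equivalently, the bounded operators $\pm i\,C_\pm^a(\alpha\cdot N)$ turn out to be complementary idempotents and $2i\,C_\sigma^a(\alpha\cdot N)$ is their difference, whose square is therefore $I_4$.) So everything hinges on the relation $i\,C_+^a(\alpha\cdot N)C_+^a=C_+^a$.

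To prove it, fix $g\in L^2(\sigma)^4$; by density one may take $g$ smooth, so that $u:=\Phi^a(g)$ is smooth up to $\Sigma$ from each side, solves $(H-a)u=0$ in $\Omega_\pm$, and has non-tangential traces $u_\pm=C_\pm^a(g)$. Using the distributional identities $\nabla\chi_{\Omega_\pm}=\mp N\sigma$ one computes
\[
(H-a)(\chi_{\Omega_+}u)=i\,(\alpha\cdot N)\,u_+\,\sigma ,\qquad
(H-a)(\chi_{\Omega_-}u)=-\,i\,(\alpha\cdot N)\,u_-\,\sigma .
\]
Now $\chi_{\Omega_\pm}u\in L^2(\mu)^4$, and for every $a\in[-m,m]$ the operator $H-a$ is injective on $L^2(\mu)^4$ because $H$ has no eigenvalues; since $\Phi^a$ maps $L^2(\sigma)^4$ into $L^2(\mu)^4$ with $(H-a)\Phi^a(h)=h\sigma$, the unique $L^2(\mu)^4$-solution of $(H-a)\psi=h\sigma$ is $\psi=\Phi^a(h)$. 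Hence $\chi_{\Omega_+}u=\Phi^a\!\big(i(\alpha\cdot N)u_+\big)$, and taking the non-tangential limit from $\Omega_+$ gives $C_+^a(g)=i\,C_+^a(\alpha\cdot N)C_+^a(g)$ --- the desired relation. (Taking the limit from $\Omega_-$, and repeating with $\chi_{\Omega_-}u=\Phi^a(-i(\alpha\cdot N)u_-)$, likewise yields the companion relations $C_-^a(\alpha\cdot N)C_-^a=i\,C_-^a$ and $C_+^a(\alpha\cdot N)C_-^a=C_-^a(\alpha\cdot N)C_+^a=0$, i.e.\ the idempotency and orthogonality mentioned above.) At the endpoints $a=\pm m$ one may either run the same argument --- checking directly from the explicit formula that $\Phi^{\pm m}$ still lands in $L^2(\mu)^4$ and that $\pm m$ are not eigenvalues of $H$ --- or simply pass to the limit, since $a\mapsto T_{R^a}$, hence $a\mapsto C_\sigma^a=C_\sigma^0+T_{R^a}$ and $a\mapsto C_\pm^a$, is norm-continuous on $[-m,m]$.

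Boundedness and $(i)$ are thus essentially bookkeeping on top of \cite[Lemma 3.3]{AMV}. The one step that genuinely requires care is the reproducing relation: one must justify, on a merely $\CC^2$ surface, the distributional jump formula for $(H-a)(\chi_{\Omega_\pm}u)$, the representation $\chi_{\Omega_\pm}u=\Phi^a(\pm i(\alpha\cdot N)u_\pm)$, and the interchange of the non-tangential limit with these identities --- all of which rest on the trace and layer-potential machinery of \cite[Section 2]{AMV} together with the boundedness established above.
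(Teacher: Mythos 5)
Your proof is correct, and for $(i)$ and boundedness it matches the paper's route, which is simply to cite the $a=0$ case in \cite[Lemma 3.3]{AMV} and observe that the same argument (or a compact perturbation of it) gives the general $a$. The kernel decomposition $\phi^a=\phi^0+R^a$ with $R^a=O(|x|^{-1})$ near the origin is a clean way to package that reduction: since $R^a$ is weakly singular on the $2$-dimensional surface, its layer potential has no jump, so $C_\sigma^a=C_\sigma^0+T_{R^a}$ and the Plemelj--Sokhotski formulae carry over verbatim. This is the same idea used later in the paper's Lemma \ref{spec l1} and in (\ref{pirris}).

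For $(ii)$, the paper's own text only records the equivalence between the two forms of $(ii)$ by conjugating with $\alpha\cdot N$, referring to \cite{AMV} for the substance. Your derivation of the substantive identity $-4(C_\sigma^a(\alpha\cdot N))^2=I_4$ via the reproducing relation $i\,C_+^a(\alpha\cdot N)C_+^a=C_+^a$ is the natural argument and is the one this framework is built around: the distributional jump formula for $(H-a)(\chi_{\Omega_\pm}u)$ that you use is exactly (\ref{conf l1 eq1}) (for general $a$), which the paper itself employs in the confinement section, and the identification $\chi_{\Omega_+}u=\Phi^a(i(\alpha\cdot N)u_+)$ follows from $(H-a)$-injectivity on $L^2(\mu)^4$ since $\pm m$ and every $a\in(-m,m)$ lie outside the point spectrum of the free Dirac operator. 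The algebra reducing $i\,C_+^a(\alpha\cdot N)C_+^a=C_+^a$ to $C_\sigma^a(\alpha\cdot N)C_\sigma^a=-\tfrac14(\alpha\cdot N)$ via $(i)$ is right. Two small remarks: at the endpoints $a=\pm m$ you should prefer the limiting argument (or note explicitly that $\pm m$ are not eigenvalues even though they lie in the essential spectrum), and, when invoking uniqueness, the step that the difference $\psi_1-\psi_2$ lies in $W^{1,2}(\mu)^4$ follows because $H(\psi_1-\psi_2)=a(\psi_1-\psi_2)\in L^2(\mu)^4$; you state this implicitly and it is worth a phrase. No genuine gap.
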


\begin{proof}
The proof of the lemma is completely analogous to the one of \cite[Lemma 3.3]{AMV}, so we omit it. Concerning the second term in $(ii)$, once we know that $-4(C_\sigma^a(\alpha\cdot N))^2=I_4$, then, multiplying the equation by $\alpha\cdot N$ from the left and from the right and using that $(\alpha\cdot N)^2=I_4$, we obtain $-4((\alpha\cdot N)C_\sigma^a)^2=I_4$.
\end{proof}

In accordance with the notation introduced in \cite{AMV}, for the case $a=0$, we write $\Phi$, $\Phi_\sigma$, $C_\pm$ and $C_\sigma$ instead of $\Phi^0$, $\Phi_\sigma^0$, $C_\pm^0$ and $C_\sigma^0$, respectively.

Finally, we recall our main tool to construct domanis where $H+V$ is self-adjoint, namely \cite[Theorem 2.11]{AMV}. Actually, the following theorem is a direct application of \cite[Theorem 2.11]{AMV} to $H+V$, and we state it here in order to make the exposition more self-contained. Given an operator between vector spaces $S:X\to Y$, denote 
$$\Ker(S)=\{x\in X:\, S(x)=0\}\quad\text{and}\quad
\Ran(S)=\{S(x)\in Y:\, x\in X\}.$$
 
\begin{teo}\label{pre t1}
Let $\Lambda:L^2(\sigma)^4\to L^2(\sigma)^4$ be a bounded operator. Set $$D(T)=\{\Phi(G+g): G\mu+g\sigma\in\XX,\,\Phi_\sigma(G)=\Lambda(g)\}\subset L^2(\mu)^4
\text{ and $T=H+V$ on $D(T)$,}$$ where $V(\varphi)=-g\sigma$ and $(H+V)(\varphi)=G$ for all $\varphi=\Phi(G+g)\in D(T)$. If $\Lambda$ is self-adjoint and $\Ran(\Lambda)$ is closed, then $T:D(T)\to L^2(\mu)^4$ is an essentially self-adjoint operator. In that case, if $\{{\Phi(h)}:\,h\in\Ker(\Lambda)\}$ is closed in $L^2(\mu)^4$, then $T$ is self-adjoint.
\end{teo}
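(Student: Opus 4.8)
The plan is to obtain the statement as a specialisation of the abstract result \cite[Theorem 2.11]{AMV}, so the task reduces to verifying that its hypotheses hold in the present setting. First I would assemble the structural input. By Lemma \ref{resolvent} applied with $a=0$, the matrix kernel $\phi:=\phi^0$ is a fundamental solution of the symmetric operator $H$ and enjoys properties $(i)$, $(ii)$, $(iii)$ of \cite[Section 2.2]{AMV}; as recalled above, $H$ extends by duality to $\XX$ and satisfies $H(\Phi(G+g))=G\mu+g\sigma$ in the sense of distributions for all $G\mu+g\sigma\in\XX$; and $\Sigma$ is of class $\CC^2$ with $\sigma$ a $2$-dimensional measure. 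This is precisely the data under which \cite[Theorem 2.11]{AMV} operates, here applied to the potential $V$ given by $V(\Phi(G+g))=-g\sigma$, so that $(H+V)(\Phi(G+g))=G$.

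Next I would feed in the boundary operators attached to $\phi$. By Lemma \ref{l jump} with $a=0$, the operators $C_\sigma$ and $C_\pm$ are bounded on $L^2(\sigma)^4$, they satisfy the Plemelj--Sokhotski relations $C_\pm=\mp\frac{i}{2}(\alpha\cdot N)+C_\sigma$ and the algebraic identity $-4(C_\sigma(\alpha\cdot N))^2=I_4$, while $\Phi_\sigma$ is bounded from $L^2(\mu)^4$ to $L^2(\sigma)^4$; moreover property $(ii)$ of $\phi$ makes $C_\sigma$ self-adjoint on $L^2(\sigma)^4$. With these in hand, the symmetry of $T$ is checked by pairing $H(\Phi(G_1+g_1))=G_1\mu+g_1\sigma$ against $\Phi(G_2+g_2)$ — the coupling against the singular part being read, as in \cite{AMV}, through the average $\frac{1}{2}(\varphi_{2,+}+\varphi_{2,-})=(\Lambda+C_\sigma)(g_2)$ of the non-tangential traces once the domain constraint $\Phi_\sigma(G_2)=\Lambda(g_2)$ is imposed — and subtracting the symmetric term; the resulting boundary form collapses to $\langle(\Lambda+C_\sigma)g_1,g_2\rangle_\sigma-\langle g_1,(\Lambda+C_\sigma)g_2\rangle_\sigma$, which vanishes for all admissible pairs exactly when $\Lambda=\Lambda^*$. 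Hence $T$ is symmetric.

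It remains to upgrade symmetry to essential self-adjointness, which I would do by identifying $T^*$ and showing $\overline T=T^*$; this is where the hypothesis that $\Ran(\Lambda)$ be closed enters, since for a bounded self-adjoint $\Lambda$ it provides the orthogonal splitting $L^2(\sigma)^4=\Ran(\Lambda)\oplus\Ker(\Lambda)$ and a bounded inverse of $\Lambda$ on $\Ran(\Lambda)$, which makes the constraint $\Phi_\sigma(G)=\Lambda(g)$ well posed and lets one compute the deficiency spaces of $T$ (the identity $-4(C_\sigma(\alpha\cdot N))^2=I_4$ being the algebraic ingredient that forces them to be trivial). Finally, the step from essential self-adjointness to self-adjointness is the question whether the graph of $T$ is already closed: for $\varphi=\Phi(G+g)$ one has $T\varphi=G$, so a graph-norm Cauchy sequence $\varphi_n=\Phi(G_n+g_n)$ forces $G_n\to G$ and therefore $\Phi(g_n)\to\varphi-\Phi(G)$ in $L^2(\mu)^4$; since $g$ is fixed by the constraint only modulo $\Ker(\Lambda)$, the limit lies in $D(T)$ precisely when $\{\Phi(h):h\in\Ker(\Lambda)\}$ is closed in $L^2(\mu)^4$, the remaining hypothesis. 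All this is exactly the mechanism of \cite[Theorem 2.11]{AMV}. I do not expect a genuine analytic obstacle here: every estimate required is already recorded in Lemmas \ref{resolvent} and \ref{l jump}, and the only point demanding care is the bookkeeping — matching the notation and hypotheses of \cite[Theorem 2.11]{AMV} with the operators $\Phi$, $\Phi_\sigma$, $C_\sigma$, $C_\pm$ above, and checking that nothing in the reduction tacitly used $a\neq0$.
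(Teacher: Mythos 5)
Your proposal takes exactly the route the paper itself takes: the paper offers no independent proof of Theorem~\ref{pre t1} but states, immediately before it, that ``the following theorem is a direct application of \cite[Theorem 2.11]{AMV} to $H+V$,'' and your plan is likewise a reduction to that same abstract result, with the hypotheses matched by Lemma~\ref{resolvent} ($a=0$) and Lemma~\ref{l jump}. The further speculation about the internal mechanism of \cite[Theorem 2.11]{AMV} (in particular the role you assign to $-4(C_\sigma(\alpha\cdot N))^2=I_4$ in the deficiency-index computation) cannot be checked against the present paper since it gives no proof, but the overall approach is the same.
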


In particular, if $\Lambda$ is self-adjoint and semi-Fredholm (see for example \cite[Definition 1.40]{Aiena}), then the operator $T$ in Theorem \ref{pre t1} is self-adjoint. Recall also that any bounded, semi-Fredholm and self-ajoint operator on a Hilbert space is actually Fredholm.

\begin{remark}\label{r regularity}
All the results in this article which are proved without the use of Fredholm's theorem are valid when $\Sigma$ is just Lipschitz (but not necessarily of class $\CC^2$), or when it is the graph of a Lipschitz function from $\R^2$ to $\R$. Actually, the smoothness and boundedness assumptions on $\Sigma$ are exclusively required for compactness purposes, in order to use Fredholm's theory.
\end{remark}

\section{On the point spectrum}\label{s spec}
In this section, we show a criterion for the existence of eigenvalues in $(-m,m)$ for $H+V$, namely Proposition \ref{spec p1}. This criterion relates the eigenvalues with a spectral property of certain bounded operators in $L^2(\sigma)^4$. Afterwards, we show some applications to the case of electrostatic shell potentials.

\begin{propo}\label{spec p1}
Let $T$ be as in Theorem \ref{pre t1}. Given $a\in(-m,m)$, there exists $\varphi=\Phi(G+g)\in D(T)$ such that $T(\varphi)=a\varphi$ if and only if $\Lambda(g)=(C_\sigma^a-C_\sigma)(g)$ and $G=a\Phi^a(g)$.
Therefore, $\Ker(T-a)\neq0$ if and only if $\Ker(\Lambda+C_\sigma-C_\sigma^a)\neq0$.
\end{propo}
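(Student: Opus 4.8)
The plan is to unwind what the eigenvalue equation $T(\varphi) = a\varphi$ means at the level of the data $(G,g)$ defining $\varphi = \Phi(G+g)$, and to use the fundamental solution $\phi^a$ of $H-a$ to rewrite everything in terms of the boundary operators on $\Sigma$. First I would record that, by construction in Theorem \ref{pre t1}, $\varphi = \Phi(G+g) \in D(T)$ carries the constraint $\Phi_\sigma(G) = \Lambda(g)$ and satisfies $(H+V)(\varphi) = G$ with $V(\varphi) = -g\sigma$, i.e. $(H)(\varphi) = G\mu + g\sigma$ in the sense of distributions. The equation $T(\varphi) = a\varphi$ then reads $G = a\varphi = a\Phi(G+g)$ as elements of $L^2(\mu)^4$; equivalently $(H-a)(\varphi) = G\mu + g\sigma - a\varphi\mu = g\sigma$ in the sense of distributions, with $\varphi \in L^2(\mu)^4$.

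Next I would invoke the mapping properties of $\phi^a$ from Lemma \ref{resolvent} and the bound $\|\phi^a * g\sigma\|_\mu \le C\|g\|_\sigma$: the function $\Phi^a(g) = \phi^a * g\sigma$ lies in $L^2(\mu)^4$ and solves $(H-a)(\Phi^a(g)) = g\sigma$ distributionally. Hence $\varphi - \Phi^a(g)$ is an $L^2(\mu)^4$ function annihilated by $H-a$; since $a \in (-m,m)$ lies in the resolvent set of the free operator $H$ on $L^2(\R^3)^4$, this forces $\varphi = \Phi^a(g)$. Combining with $G = a\varphi$ gives $G = a\Phi^a(g)$, which is the second asserted identity. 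For the first identity I would then impose the domain constraint: $\Lambda(g) = \Phi_\sigma(G) = \Tr_\sigma(\Phi(G)) = \Tr_\sigma(\varphi - \Phi(g))$, using $\varphi = \Phi(G+g) = \Phi(G) + \Phi(g)$. Taking non-tangential boundary values from either side and applying the Plemelj--Sokhotski formulae of Lemma \ref{l jump}$(i)$, the $\mp\frac{i}{2}(\alpha\cdot N)$ terms cancel in the average (or one notes $\Tr_\sigma \Phi^b = C^b_\sigma$ directly on the relevant trace), yielding $\Tr_\sigma(\varphi) = \Tr_\sigma(\Phi^a(g)) = C_\sigma^a(g)$ and $\Tr_\sigma(\Phi(g)) = C_\sigma(g)$; hence $\Lambda(g) = (C_\sigma^a - C_\sigma)(g)$. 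The converse direction is then a matter of reversing these steps: given $g \in \Ker(\Lambda + C_\sigma - C_\sigma^a)$ with $g \neq 0$, set $G = a\Phi^a(g)$ and $\varphi = \Phi(G+g)$; one checks $\varphi = \Phi^a(g)$ (again by the resolvent identity, since $\Phi(G+g) - \Phi^a(g)$ solves $(H-a)(\cdot) = 0$ in $L^2$), that the domain constraint $\Phi_\sigma(G) = \Lambda(g)$ holds by hypothesis, and that $T(\varphi) = G = a\varphi$; moreover $\varphi = \Phi^a(g) \neq 0$ because $g\sigma \neq 0$ forces $(H-a)\varphi \neq 0$. The final equivalence $\Ker(T-a) \neq 0 \iff \Ker(\Lambda + C_\sigma - C_\sigma^a) \neq 0$ is immediate from this correspondence once one observes it is a bijection between the two kernels (with inverse $\varphi \mapsto g = -V(\varphi)/\sigma$).

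The main obstacle I anticipate is the careful justification that $\varphi = \Phi^a(g)$ rather than merely $\varphi - \Phi^a(g) \in \Ker(H-a)$: this rests on knowing that $H-a$ has trivial kernel on $L^2(\R^3)^4$ for $a \in (-m,m)$ (the spectrum of the free Dirac operator being $\R\setminus(-m,m)$), and that $\Phi^a(g) \in L^2(\mu)^4$, which is exactly the $d=2$ case of the bound quoted after Lemma \ref{resolvent}. A secondary subtlety is verifying that the two traces $\Tr_\sigma(\Phi^a(g))$ and $\Tr_\sigma(\Phi(g))$ really coincide with $C_\sigma^a(g)$ and $C_\sigma(g)$ — i.e. that the solid-integral convolution has non-tangential boundary value equal to the principal-value singular integral on $\Sigma$; this is precisely the content of Lemma \ref{l jump} applied at parameters $a$ and $0$, so once that lemma is in hand the identification is bookkeeping. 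Everything else is linear algebra on the data $(G,g)$ and direct substitution into the definitions from Theorem \ref{pre t1}.
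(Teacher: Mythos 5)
Your proposal is correct and follows essentially the same path as the paper's proof: read off $(H-a)\varphi=g\sigma$ from the eigenvalue equation, use the mapping properties of $\phi^a$ together with $a\in(-m,m)$ lying in the resolvent set of the free Dirac operator to identify $\varphi=\Phi^a(g)$ (hence $G=a\Phi^a(g)$), and then average the Plemelj--Sokhotski boundary values of Lemma \ref{l jump}$(i)$ to convert the domain constraint $\Phi_\sigma(G)=\Lambda(g)$ into $\Lambda(g)=(C_\sigma^a-C_\sigma)(g)$. The one small difference is that you argue directly on $\varphi$ rather than on $G$, which avoids the extra factor of $a$ in the paper's chain $a\Phi(G+g)=a\Phi^a(g)$ and therefore sidesteps the separate treatment of the case $a=0$ that the paper handles in two additional lines.
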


\begin{proof}
Let $a\in(-m,m)$ and assume that $T(\varphi)=a\varphi$ for some $\varphi=\Phi(G+g)\in D(T)$. Then, 
\begin{equation}\label{spec p1 eq1}
G=(H+V)(\varphi)=a\varphi=a\Phi(G+g),
\end{equation}
and therefore $H(G)=aG\mu+ag\sigma$ in the sense of distributions. This yields $(H-a)(G)=ag\sigma$, and applying $\Phi^a$ we conclude that $G=a\Phi^a(g)$. In particular, we have seen that $a\Phi(G+g)=a\Phi^a(g)$, and thus Lemma \ref{l jump}$(i)$ yields
\begin{equation*}
a\left(\Lambda\mp\frac{i}{2}\,(\alpha\cdot N)+C_\sigma\right)(g)=a(\Phi_\sigma(G)+C_\pm(g))=aC_\pm^a(g)
=a\left(\mp\frac{i}{2}\,(\alpha\cdot N)+C_\sigma^a\right)(g).
\end{equation*}
Summing these equations, we obtain $a(\Lambda+C_\sigma)(g)=aC_\sigma^a(g)$, which is equivalent to $\Lambda(g)=(C_\sigma^a-C_\sigma)(g)$ when $a\neq0$. For the case $a=0$, from (\ref{spec p1 eq1}) we have that $G=0$ and, since $\varphi\in D(T)$, 
$\Lambda(g)=\Phi_\sigma(G)=0=(C_\sigma-C_\sigma^0)(g)$.

On the contrary, assume that there exists $g\in L^2(\sigma)^4$ such that $\Lambda(g)=(C_\sigma^a-C_\sigma)(g)$. Set $G=a\Phi^a(g)$ and $\varphi=\Phi(G+g)$. As we did in (\ref{spec p1 eq1}), $G=a\Phi^a(g)$ implies $(H-a)(G)=ag\sigma$, so $G=a\Phi(G+g)$ and
$$T(\varphi)=(H+V)(\varphi)=G=a\Phi(G+g)=a\varphi.$$
In particular, $a\Phi^a(g)=a\Phi(G+g)$, thus Lemma \ref{l jump}$(i)$ once again gives 
\begin{equation*}
a\left(\mp\frac{i}{2}\,(\alpha\cdot N)+C_\sigma^a\right)(g)
=a\Phi_\sigma(G)+a\left(\mp\frac{i}{2}\,(\alpha\cdot N)+C_\sigma\right)(g).
\end{equation*}
For $a\neq0$, this and the assumption on $g$ imply $\Phi_\sigma(G)=(C_\sigma^a-C_\sigma)(g)=\Lambda(g)$, thus $\varphi\in D(T)$.
As before, the case $a=0$ can be easily treated separately. The proposition is proved.
\end{proof}

\subsection{Electrostatic shell potentials}\label{ss electro}
In \cite[Theorem 3.8]{AMV} we proved that, 
if $\lambda\in\R\setminus\{0,\pm2\}$ and $T$ is the operator defined by
\begin{equation}\label{domain}
\begin{split}
D(T)=\big\{\Phi&(G+g): G\mu+g\sigma\in\XX,\,\Phi_\sigma(G)=\Lambda(g)\big\}\\
&\text{and}\quad T=H+V_\lambda \text{ on } D(T),
\end{split}
\end{equation} 
where 
$$\Lambda=-(1/\lambda+C_\sigma),\qquad V_\lambda(\varphi)=\frac{\lambda}{2}(\varphi_++\varphi_-)\sigma$$ and 
$\varphi_\pm=\Phi_\sigma(G)+C_\pm (g)$ for $\varphi=\Phi(G+g)\in D(T)$, then $T:D(T)\subset L^2(\mu)^4\to L^2(\mu)^4$ is self-adjoint. Moreover, we also showed that $V_\lambda=V$ on $D(T)$ for all $\lambda\neq0$, so the self-adjointness was a consequence of Theorem \ref{pre t1}.

\begin{lema}\label{spec l1}
Set $\|C_\sigma^a\|=\inf\left\{C>0:\|C_\sigma^a (g)\|_\sigma\leq C\|g\|_\sigma\text{ for all }g\in L^2(\sigma)^4\right\}.$ Then, we have $\sup_{a\in(-m,m)}\|C_\sigma^a\|<\infty.$
\end{lema}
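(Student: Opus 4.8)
The plan is to establish the uniform bound $\sup_{a\in(-m,m)}\|C_\sigma^a\|<\infty$ by exploiting the explicit formula for $\phi^a$ from Lemma \ref{resolvent} together with the known $L^2(\sigma)^4$-boundedness of $C_\sigma^a$ for each fixed $a$ (from Lemma \ref{l jump}) and the known boundedness for $a=0$ (the classical singular integral operator $C_\sigma$). The key observation is that $\phi^a(x) = \phi^0(x) + (\phi^a(x)-\phi^0(x))$, where $\phi^0$ gives the $a=0$ operator and the difference $\phi^a-\phi^0$ should have a much milder singularity at the origin, so that it defines an operator with integrable kernel whose norm can be bounded uniformly in $a\in(-m,m)$.

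First I would write out $\phi^a(x)-\phi^0(x)$ explicitly using the formula in Lemma \ref{resolvent}, splitting the analysis into the factor $e^{-\sqrt{m^2-a^2}|x|}/(4\pi|x|)$ versus $1/(4\pi|x|)$ and the polynomial-in-$x$ matrix part. Near $x=0$, the leading $|x|^{-2}$ singularities coming from the $i\alpha\cdot x/|x|^2$ terms must cancel between $\phi^a$ and $\phi^0$ (since both equal $i\alpha\cdot x/(4\pi|x|^3) + O(|x|^{-1})$), leaving a kernel that is $O(|x|^{-1})$ near the origin, hence locally integrable on the $2$-dimensional set $\Sigma$; one must track that the constants in these estimates depend only on $m$ and not on $a$, which is clear because $\sqrt{m^2-a^2}\in(0,m]$ is bounded. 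For large $|x|$ the exponential decay ensures integrability with $a$-independent constants as well (using $\sqrt{m^2-a^2}$ could be small, but $\Sigma$ is bounded, so only behavior on a fixed compact set matters, and even the difference $1-e^{-\sqrt{m^2-a^2}|x|}\le \sqrt{m^2-a^2}|x|\le m\,\diam(\Sigma)$ is controlled). Therefore the operator with kernel $\phi^a-\phi^0$ acting on $L^2(\sigma)^4$ is bounded, by Schur's test or the Cauchy--Schwarz/Young-type estimate, with norm bounded uniformly in $a$, since $\Sigma$ is a fixed bounded $2$-dimensional set and $\sup_{x\in\Sigma}\int_\Sigma |\phi^a(x-z)-\phi^0(x-z)|\,d\sigma(z)$ is finite uniformly in $a$.

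Combining this with $\|C_\sigma^0\| < \infty$ (the classical result, valid since $\Sigma\in\CC^2$, or already contained in \cite{AMV}) and the triangle inequality $\|C_\sigma^a\| \le \|C_\sigma^0\| + \|C_\sigma^a - C_\sigma^0\|$ gives the claim. I would also remark that the principal-value integrals defining $C_\sigma^a$ and $C_\sigma^0$ can be taken with respect to the \emph{same} truncation $|x-z|>\epsilon$, so that the difference $C_\sigma^a - C_\sigma^0$ is genuinely the (non-principal-value, absolutely convergent) integral operator with kernel $\phi^a-\phi^0$; the limit $\epsilon\searrow 0$ then passes through harmlessly by dominated convergence.

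The main obstacle I anticipate is making the cancellation of the $|x|^{-2}$ singularity in $\phi^a - \phi^0$ rigorous and quantitative with $a$-independent constants: one must expand $e^{-\sqrt{m^2-a^2}|x|} = 1 - \sqrt{m^2-a^2}|x| + O((m^2-a^2)|x|^2)$ carefully inside the matrix expression, checking that after multiplying by $(1+\sqrt{m^2-a^2}|x|)\,i\alpha\cdot x/|x|^2$ and by $(a+m\beta)$ the surviving terms are all $O(|x|^{-1})$ with constants depending only on $m$ (and, for the $a$ appearing as a scalar multiple, on the bound $|a|<m$). This is a routine but slightly delicate Taylor-expansion bookkeeping; once it is done, the integrability on $\Sigma$ and the uniform Schur bound follow immediately.
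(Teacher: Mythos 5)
Your decomposition $\phi^a = \phi^0 + (\phi^a - \phi^0)$ differs only cosmetically from the paper's splitting $\phi^a = \omega_1 + \omega_2 + \omega_3$, where $\omega_3 = \frac{i}{4\pi}\,\alpha\cdot x/|x|^3$ is the $a$-independent Riesz-type kernel: in both cases the leading $|x|^{-2}$ singularity cancels, leaving an $a$-independent bounded singular integral plus a remainder with an $O(|x|^{-1})$ kernel (via the mean value theorem applied to $e^{-\sqrt{m^2-a^2}|x|}-1$), which is then controlled by a Schur/generalized Young estimate on the bounded $2$-dimensional set $\Sigma$ with constants uniform over $a\in(-m,m)$ because $\sqrt{m^2-a^2}\le m$. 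This is essentially the paper's argument.
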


\begin{proof}
We write
\begin{equation*}
\begin{split}
\phi^a(x)&=\frac{e^{-\sqrt{m^2-a^2}|x|}}{4\pi|x|}\left(a+m\beta
+i\sqrt{m^2-a^2}\,\alpha\cdot\frac{x}{|x|}\right)+\frac{e^{-\sqrt{m^2-a^2}|x|}-1}{4\pi}\,i\left(\alpha\cdot\frac{x}{|x|^3}\right)\\
&\quad+\frac{i}{4\pi}\,\left(\alpha\cdot\frac{x}{|x|^3}\right)
=\omega_1(x)+\omega_2(x)+\omega_3(x).
\end{split}
\end{equation*}
Note that 
\begin{equation}\label{spec l1 eq1}
\sup_{a\in(-m,m)}\sup_{1\leq k,l\leq 4}|(\omega_1)_{k,l}(x)|=O(|x|^{-1})\quad\text{for }|x|\to0,
\end{equation}
and by the mean value theorem we have the same estimate for $\omega_2$. Using (\ref{spec l1 eq1}), that $\sigma$ is 2-dimensional and rather standard arguments (essentially, that $\Sigma$ is bounded and the generalized Young's inequality), it is easy to show that the convolution operator with kernel $\omega_1+\omega_2$ is bounded in $L^2(\sigma)^4$ uniformly on $a\in(-m,m)$. Finally, the $L^2(\sigma)^4$ boundedness of the singular intergal operator with convolution kernel $\omega_3$ follows, for example, by \cite[Theorem 20.15]{Mattila-llibre}, working component by component. Note that this last operator does not depend on $a$, so the lemma is proved.
\end{proof}

\begin{teo}\label{spec t1}
Let $\lambda\in\R\setminus\{0\}$, let $T=H+V_\lambda$ be as in $(\ref{domain})$, and $a\in(-m,m)$. Then $\Ker(T-a)\neq0$ if and only if $\Ker(1/\lambda+C_\sigma^a)\neq0$. In particular, $H+V_\lambda$ and $H+V_{-4/\lambda}$ have the same eigenvalues in $(-m,m)$.

Moreover, if $|\lambda|\not\in[1/\|C_\sigma^a\|,4\|C_\sigma^a\|]$ then $a$ is not an eigenvalue of $H+V_\lambda$, and if
$|\lambda|\not\in[1/C,4C]$ then $H+V_\lambda$ has no eigenvalues in $(-m,m)$, where
$C=\sup_{a\in(-m,m)}\|C_\sigma^a\|<\infty$.
\end{teo}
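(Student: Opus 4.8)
The plan is to reduce everything to Proposition \ref{spec p1} together with the $\Lambda$ coming from the electrostatic potential. Recall from the discussion around (\ref{domain}) that for $V_\lambda$ the relevant operator is $\Lambda=-(1/\lambda+C_\sigma)$, and on $D(T)$ we have $V_\lambda=V$, so Proposition \ref{spec p1} applies verbatim. According to that proposition, $a\in(-m,m)$ is an eigenvalue of $T=H+V_\lambda$ if and only if $\Ker(\Lambda+C_\sigma-C_\sigma^a)\neq 0$. Substituting $\Lambda=-(1/\lambda+C_\sigma)$, the operator $\Lambda+C_\sigma-C_\sigma^a$ collapses to $-(1/\lambda+C_\sigma^a)$, whose kernel is the same as that of $1/\lambda+C_\sigma^a$. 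This is exactly the claimed equivalence $\Ker(T-a)\neq 0 \iff \Ker(1/\lambda+C_\sigma^a)\neq 0$, so the first part is essentially a one-line algebraic simplification.

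For the isospectral statement, I would observe that $\Ker(1/\lambda + C_\sigma^a)$ depends on $\lambda$ only through the scalar $1/\lambda$: if $g$ is in the kernel then $C_\sigma^a(g) = -(1/\lambda)g$, i.e.\ $g$ is an eigenvector of $C_\sigma^a$ with eigenvalue $-1/\lambda$. By Lemma \ref{l jump}$(ii)$ we have $-4(C_\sigma^a(\alpha\cdot N))^2 = I_4$, equivalently $(\alpha\cdot N)(C_\sigma^a)^{-1}$-type relations; more directly, the identity $-4((\alpha\cdot N)C_\sigma^a)^2=I_4$ says that $(\alpha\cdot N)C_\sigma^a$ has square $-\tfrac14 I_4$, so its spectrum is $\{\pm i/2\}$ and it is invertible. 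The point to extract is that $C_\sigma^a$ itself is invertible and satisfies a relation forcing a symmetry of its spectrum: from $-4(C_\sigma^a(\alpha\cdot N))^2 = I_4$ one gets that if $\mu$ is an eigenvalue of $C_\sigma^a$ then so is $-1/(4\mu)$, because conjugating by $\alpha\cdot N$ (which is an involution) and using the relation swaps $C_\sigma^a$ with $-\tfrac14(C_\sigma^a)^{-1}$ up to this conjugation. Hence $\Ker(1/\lambda+C_\sigma^a)\neq 0$ iff $-1/\lambda \in \mathrm{spec}_p(C_\sigma^a)$ iff $-\lambda/4 = -1/(4(-1/\lambda))^{-1}$\dots — cleaning this up, $-1/\lambda$ and $\lambda/4$ are paired, and $\lambda/4 = -1/(-4/\lambda)$, which matches the parameter $-4/\lambda$. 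Therefore $H+V_\lambda$ and $H+V_{-4/\lambda}$ have the same eigenvalues in $(-m,m)$.

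For the threshold statements I would argue by norm estimates. If $g\in\Ker(1/\lambda+C_\sigma^a)$ with $g\neq 0$, then $\|g\|_\sigma = |\lambda|\,\|C_\sigma^a(g)\|_\sigma \leq |\lambda|\,\|C_\sigma^a\|\,\|g\|_\sigma$, which forces $|\lambda|\geq 1/\|C_\sigma^a\|$. For the upper bound I would use the isospectral symmetry already proved: $a$ is an eigenvalue of $H+V_\lambda$ iff it is an eigenvalue of $H+V_{-4/\lambda}$, and applying the lower bound to $-4/\lambda$ gives $4/|\lambda| \geq 1/\|C_\sigma^a\|$, i.e.\ $|\lambda|\leq 4\|C_\sigma^a\|$. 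Thus a nontrivial eigenvector at the level $a$ forces $|\lambda|\in[1/\|C_\sigma^a\|,\,4\|C_\sigma^a\|]$. The uniform-in-$a$ statement then follows immediately by taking $C=\sup_{a\in(-m,m)}\|C_\sigma^a\|$, which is finite by Lemma \ref{spec l1}: if $|\lambda|\notin[1/C,4C]$ then for every $a\in(-m,m)$ we have $|\lambda|\notin[1/\|C_\sigma^a\|,4\|C_\sigma^a\|]$, so no $a$ can be an eigenvalue.

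The only slightly delicate point is extracting the spectral symmetry $\mu \leftrightarrow -1/(4\mu)$ of $C_\sigma^a$ cleanly from Lemma \ref{l jump}$(ii)$; I expect this to be the main obstacle, since one must be careful that the relation $-4(C_\sigma^a(\alpha\cdot N))^2=I_4$ is about the composition $C_\sigma^a(\alpha\cdot N)$ rather than $C_\sigma^a$ alone, and transfer it to an eigenvector statement for $C_\sigma^a$ by multiplying suitable vectors by $\alpha\cdot N$. Everything else — the algebraic collapse of $\Lambda+C_\sigma-C_\sigma^a$, the norm inequalities, and the appeal to Lemma \ref{spec l1} for finiteness of $C$ — is routine.
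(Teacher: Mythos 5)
Your proposal is correct and follows essentially the same route as the paper's proof: the first equivalence is the algebraic collapse $\Lambda+C_\sigma-C_\sigma^a=-(1/\lambda+C_\sigma^a)$ under Proposition \ref{spec p1}, the isospectral symmetry comes from conjugating $C_\sigma^a$ by $\alpha\cdot N$ and using Lemma \ref{l jump}$(ii)$ (yielding $(\alpha\cdot N)C_\sigma^a(\alpha\cdot N)=-\tfrac14(C_\sigma^a)^{-1}$, hence the pairing $\mu\leftrightarrow -1/(4\mu)$), and the thresholds come from norm estimates. The only minor variation is that you obtain the upper bound $|\lambda|\leq 4\|C_\sigma^a\|$ by applying the already-proved lower bound to the parameter $-4/\lambda$, whereas the paper deduces both bounds at once from the chain $\tfrac14\|g\|_\sigma=\|((\alpha\cdot N)C_\sigma^a)^2 g\|_\sigma\leq\|C_\sigma^a\|\,\|C_\sigma^a(g)\|_\sigma$; both routes are valid and rest on the same algebra.
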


\begin{proof}
That $\Ker(T-a)\neq0$ if and only if $\Ker(1/\lambda+C_\sigma^a)\neq0$ is a direct consequence of the definition of $\Lambda$ and Proposition \ref{spec p1}. 

Assume that $\Ker(1/\lambda+C_\sigma^a)\neq0$. Then there exists a non-trivial $g\in L^2(\sigma)^4$ such that $C_\sigma^a(g)=-g/\lambda$. Using Lemma \ref{l jump}$(ii)$ we deduce that 
$-\frac{1}{4}g=-\frac{1}{\lambda}((\alpha\cdot N)C_\sigma^a(\alpha\cdot N))(g)$. This easily implies that $C_\sigma^a(f)=\frac{\lambda}{4}f$ for $f=(\alpha\cdot N)g\neq0$, so $\Ker(-\lambda/4+C_\sigma^a)\neq0$. The same arguments actually show that $\Ker(1/\lambda+C_\sigma^a)\neq0$ if and only if $\Ker(-\lambda/4+C_\sigma^a)\neq0$, and by the first part of the theorem, $H+V_\lambda$ and $H+V_{-4/\lambda}$ have the same eigenvalues in $(-m,m)$.

For the last part of the theorem, since $(\alpha\cdot N)^2=I_4$, we easily have $\|(\alpha\cdot N)g\|_\sigma\leq\|g\|_\sigma$ for all $g\in L^2(\sigma)^4$. Combining this estimate with Lemma \ref{l jump}$(ii)$, we obtain
\begin{equation*}
\frac{1}{4}\|g\|_\sigma=\|((\alpha\cdot N)C_\sigma^a)^2(g)\|_\sigma
\leq\|C_\sigma^a\|\|C_\sigma^a(g)\|_\sigma.
\end{equation*}
Hence,
\begin{equation}\label{spec t1 eq1}
\frac{1}{4C}\leq\frac{1}{4\|C_\sigma^a\|}
\leq\frac{\|C_\sigma^a(g)\|_\sigma}{\|g\|_\sigma}
\leq\|C_\sigma^a\|\leq C
\end{equation}
if $g\neq0$. By the first part of the theorem, if there exists $\varphi\in D(T)$ such that $(H+V_\lambda)(\varphi)=a\varphi$ for some $a\in(-m,m)$, then there exists a non-trivial $g\in L^2(\sigma)^4$ such that $C_\sigma^a(g)=-g/\lambda$. Thus, (\ref{spec t1 eq1}) easily implies
$|\lambda|\in[1/\|C_\sigma^a\|,4\|C_\sigma^a\|]\subset[1/C,4C]$, and the theorem follows.
\end{proof}

\begin{remark}\label{threshold}
Theorem \ref{spec t1} shows that the coupling of the free Dirac operator $H$ with electrostatic shell potentials $V_\lambda$ relative to $\Sigma$ does not generate eigenvalues either for big or small values of $|\lambda|$. Recall that the coupling of $H$ with the Coulomb potential $\lambda/|x|$ generates eigenvalues for any small $|\lambda|$ (see \cite[Theorem 1]{DDEV}, for example) and is not essentially self-adjoint if $|\lambda|$ is big enough. On the other hand, it is not hard to see that there exists a sequence $\{\lambda_j\}_{j\in\N}\subset\R$ with $|\lambda_j|\to\infty$ for $j\to\infty$ such that the coupling of $H$ with the potentials $\lambda_j\chi_{B(0,1)}$ generates eigenvalues.
\end{remark}

\begin{remark}\label{lambda-a}
If we define $\Lambda_\pm^a=1/\lambda\pm C_\sigma^a$, 
by Lemma \ref{l jump}$(ii)$ we have
\begin{equation}\label{spec t1 eq2}
\Lambda_+^a\Lambda_-^a=\frac{1}{\lambda^2}-(C_\sigma^a)^2
=\frac{1}{\lambda^2}-\frac{1}{4}
-C_\sigma^a(\alpha\cdot N)\{\alpha\cdot N,C_\sigma^a\}=b-K,
\end{equation}
where $b=1/\lambda^2-1/4$ and $K=C_\sigma^a(\alpha\cdot N)\{\alpha\cdot N,C_\sigma^a\}$.
Following the arguments of \cite[Lemma 3.5]{AMV}, one can show that $\{\alpha\cdot N,C_\sigma^a\}$ is a compact operator, as well as $K$. Moreover, $K$ is easily seen to be self-adjoint, and hence it has a non-trivial eigenfunction. Therefore, for any $a\in(-m,m)$ there exists some $\lambda$ such that $\Ker(1/\lambda+C_\sigma^a)\neq0$ by (\ref{spec t1 eq2}), so the second part of Theorem \ref{spec t1} is meaningful.
\end{remark}

Note that (\ref{spec t1 eq1}) yields $\|C_\sigma^a\|\geq1/2$ for all $a\in(-m,m)$. In particular, this lower bound of $\|C_\sigma^a\|$ does not depend on $\Sigma$. For an upper bound, this type of result may not be expected because, roughly speaking, the abruptness of $\Sigma$ may play a role in questions concerning upper bounds for the norm of singular integral operators on Lipschitz surfaces (see for example \cite[Chapter 20]{Mattila-llibre} for related topics).

The following theorem generalizes some results of \cite[Theorem 3.8$(ii)$]{AMV}.
\begin{teo}\label{symm a -a}
Assume that $\sigma=s_\#\sigma$, where $s(x)=-x$ for $x\in\R^3$ and $s_\#\sigma$ is the image measure of $\sigma$ with respect to $s$.
Let $\lambda\in\R\setminus\{0\}$ and $H+V_\lambda$ be as in $(\ref{domain})$. If $H+V_\lambda$ has some eigenvalue $a\in(-m,m)$, then $H+V_{-\lambda}$ has $-a$ as an eigenvalue. 
\end{teo}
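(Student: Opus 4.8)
The plan is to push the problem onto the boundary $\Sigma$ by means of Theorem~\ref{spec t1}, and then to exhibit an explicit unitary operator on $L^2(\sigma)^4$ that conjugates $C_\sigma^a$ into $-C_\sigma^{-a}$. By Theorem~\ref{spec t1}, the number $a\in(-m,m)$ is an eigenvalue of $H+V_\lambda$ exactly when $\Ker(1/\lambda+C_\sigma^a)\neq0$, while, applying the same theorem with $-\lambda$ in place of $\lambda$ and $-a$ in place of $a$ (both admissible, since $-\lambda\neq0$ and $-a\in(-m,m)$), the number $-a$ is an eigenvalue of $H+V_{-\lambda}$ exactly when $\Ker(-1/\lambda+C_\sigma^{-a})\neq0$. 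Hence it suffices to produce an invertible map $W\colon L^2(\sigma)^4\to L^2(\sigma)^4$ with $WC_\sigma^aW^{-1}=-C_\sigma^{-a}$: then any nonzero $g$ with $C_\sigma^a(g)=-g/\lambda$ yields a nonzero $Wg$ with $C_\sigma^{-a}(Wg)=Wg/\lambda$, i.e. $Wg\in\Ker(-1/\lambda+C_\sigma^{-a})\setminus\{0\}$.

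The crucial ingredient is a matrix identity for the fundamental solution. Let $\gamma=\left(\begin{smallmatrix}0&I_2\\I_2&0\end{smallmatrix}\right)$; one checks immediately that $\gamma$ is unitary with $\gamma^2=I_4$, that $\gamma$ commutes with each $\alpha_j$, and that $\gamma$ anticommutes with $\beta$. Since $\sqrt{m^2-a^2}=\sqrt{m^2-(-a)^2}$, a direct computation from the explicit formula for $\phi^a$ in Lemma~\ref{resolvent} then gives
$$\gamma\,\phi^a(x)\,\gamma=-\phi^{-a}(-x)\qquad\text{for all }x\in\R^3\setminus\{0\}:$$
conjugation by $\gamma$ leaves the terms $a$ and $i\alpha\cdot x/|x|^2$ untouched and flips the sign of $m\beta$, and these sign changes are absorbed by the outer minus sign together with the reflection $x\mapsto-x$ (which flips the sign of the $i\alpha\cdot x/|x|^2$ term). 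The exponential prefactor is even in $a$, so it plays no role.

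Finally I would feed this into the operators of Lemma~\ref{l jump}. The hypothesis $\sigma=s_\#\sigma$ makes $(\mathcal{U}f)(x)=f(-x)$ a unitary involution of $L^2(\sigma)^4$, and it commutes with the pointwise action of $\gamma$, so $W:=\gamma\,\mathcal{U}$ is again a unitary involution. Writing out $WC_\sigma^aW^{-1}$ from the definition of $C_\sigma^a$, carrying out the change of variables $z\mapsto-z$ in the principal-value integral — legitimate precisely because $s_\#\sigma=\sigma$ — and then using the identity $\gamma\,\phi^a(w-x)\,\gamma=-\phi^{-a}(x-w)$, one is led to $WC_\sigma^aW^{-1}=-C_\sigma^{-a}$. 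Combined with the reduction of the first paragraph, this proves the theorem.

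I do not expect a genuine obstacle here; the one point demanding a bit of insight is the choice of the intertwiner. In particular $\beta$ by itself will not do, since conjugation by a matrix cannot flip the scalar term $a$ relative to $m\beta$; one is forced to use the off-diagonal matrix $\gamma$ together with the spatial reflection $x\mapsto-x$. The remaining steps — the sign bookkeeping in $\gamma\phi^a(x)\gamma=-\phi^{-a}(-x)$, and checking that the substitution $z\mapsto-z$ transforms the excised ball $\{|x-z|\le\epsilon\}$ defining the principal value in the expected way — are entirely routine.
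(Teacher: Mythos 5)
Your argument is correct and is essentially the paper's proof: you use the same reduction via Theorem~\ref{spec t1}, the same intertwining matrix (your $\gamma$ is the paper's $\tau=\left(\begin{smallmatrix}0&I_2\\I_2&0\end{smallmatrix}\right)$), the same identity $\tau\phi^a(x)\tau=-\phi^{-a}(-x)$, and the same change of variables $z\mapsto-z$ justified by $\sigma=s_\#\sigma$. The only cosmetic difference is that you package the bookkeeping as an explicit conjugation $WC_\sigma^aW^{-1}=-C_\sigma^{-a}$, whereas the paper applies the same manipulations directly to the candidate eigenvector $f=-\tau(g\circ s)$.
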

\begin{proof}
From Theorem \ref{spec t1} we see that, if $H+V_\lambda$ has $a$ as an eigenvalue, there exists a nontrivial $g\in L^2(\sigma)^4$ such that $(1/\lambda+C_\sigma^a)(g)=0$. Set $f=-\tau (g\circ s)$, where 
$$\tau
=\left(\begin{array}{cc} 0 & I_2\\
I_2 & 0 \end{array}\right)$$
and $I_2$ denotes the identity operator in $L^2(\sigma)^2$. Obviously, $f\neq0$ and, since $\sigma=s_\#\sigma$, we have $f\in L^2(\sigma)^4$.
It is straightforward to check that 
$-\phi^{-a}(z)\tau=\tau\phi^{a}(-z)$ for all $z\in\R^3\setminus\{0\}$. Therefore, using the assumptions on $\sigma$ and on $g$,
\begin{equation*}
\begin{split}
C^{-a}_\sigma(f)(x)&=\lim_{\epsilon\searrow0}\int_{|x-y|>\epsilon}-\phi^{-a}(x-y)\tau g(-y)\,d\sigma(y)\\
&=\tau\lim_{\epsilon\searrow0}\int_{|x-y|>\epsilon}\phi^a(-x+y)g(-y)\,ds_\#\sigma(y)\\
&=\tau\lim_{\epsilon\searrow0}\int_{|x+y|>\epsilon}\phi^a(-x-y) g(y)\,d\sigma(y)\\
&=\tau C^a_\sigma(g)(-x)
=-\frac{1}{\lambda}\,\tau g(-x)=\frac{1}{\lambda}\,f(x).
\end{split}
\end{equation*}
That is,  $(-1/\lambda+C_\sigma^{-a})(f)=0$ for some $f\neq0$. By Theorem \ref{spec t1} once again, $H+V_{-\lambda}$ has $-a$ as an eigenvalue.
\end{proof}

\begin{teo}\label{spec t2}
Let $\lambda\in\R\setminus\{0\}$ and let $T=H+V_\lambda$ be as in $(\ref{domain})$. If $\Omega_-$ is connected, then $T$ has no eigenvalues in $\R\setminus[-m,m]$.
\end{teo}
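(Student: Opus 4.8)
The plan is to argue by contradiction. Suppose $a\in\R\setminus[-m,m]$ and $\varphi=\Phi(G+g)\in D(T)$ satisfy $T(\varphi)=a\varphi$ with $\varphi\neq0$; I want to reach a contradiction. Exactly as at the beginning of the proof of Proposition \ref{spec p1}, from $T(\varphi)=(H+V_\lambda)(\varphi)=G=a\varphi$ one gets $G\mu=a\varphi\mu$ as measures, and since $V_\lambda(\varphi)=-g\sigma$ this gives $H\varphi=a\varphi\mu+g\sigma$, i.e. $(H-a)\varphi=g\sigma$ in the sense of distributions, with $\varphi\in L^2(\mu)^4$.

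The key point is that $g\sigma$ is supported on the compact set $\Sigma$. First I would fix $R_0>0$ with $\overline{\Omega_+}\subset B(0,R_0)$. Testing $(H-a)\varphi=g\sigma$ against functions supported in $\Omega_-$ (which is disjoint from $\Sigma$) shows that $(H-a)\varphi=0$ in $\Omega_-$; by elliptic regularity for the constant-coefficient system $H-a$, the restriction $\varphi|_{\Omega_-}$ is real-analytic, and applying $H+a$ shows that each component satisfies the Helmholtz equation $(-\Delta-k^2)\varphi_j=0$ with $k:=\sqrt{a^2-m^2}>0$ --- this is the only place where $|a|>m$ is used. Since $\int_{\Omega_-}|\varphi|^2\,d\mu<\infty$, there is a sequence $R_j\to\infty$ with $\int_{|x|=R_j}|\varphi|^2\,dS\to0$, so Rellich's uniqueness lemma forces $\varphi\equiv0$ on $\{|x|>R_0\}$. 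Because $\Omega_-$ is connected and $\varphi|_{\Omega_-}$ is real-analytic, unique continuation then yields $\varphi\equiv0$ on all of $\Omega_-$; this is where the hypothesis on $\Omega_-$ enters, and it is the only genuinely delicate step. (If $\Omega_-$ had a bounded component one could genuinely have nontrivial $L^2$ solutions there, so connectedness is essential.)

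Having $\varphi\equiv0$ in $\Omega_-$, the exterior non-tangential trace vanishes, $\varphi_-=0$. Since $V_\lambda=V$ on $D(T)$ we have $\tfrac{\lambda}{2}(\varphi_++\varphi_-)=-g$, hence $g=-\tfrac{\lambda}{2}\varphi_+$. On the other hand, writing $\varphi_\pm=\Phi_\sigma(G)+C_\pm(g)$ and using Lemma \ref{l jump}$(i)$, one gets $\varphi_+-\varphi_-=(C_+-C_-)(g)=-i(\alpha\cdot N)g$, so $\varphi_+=\tfrac{i\lambda}{2}(\alpha\cdot N)\varphi_+$. Applying $\alpha\cdot N$ and using $(\alpha\cdot N)^2=I_4$ gives $(\alpha\cdot N)\varphi_+=\tfrac{i\lambda}{2}\varphi_+$, and substituting back yields $\varphi_+=-\tfrac{\lambda^2}{4}\varphi_+$, so $(1+\lambda^2/4)\varphi_+=0$ and therefore $\varphi_+=0$ and $g=0$.

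Finally, with $g=0$ one has $(H-a)\varphi=0$ in $\R^3$ with $\varphi\in L^2(\mu)^4$; taking the Fourier transform gives $(\alpha\cdot\xi+m\beta-a)\widehat\varphi(\xi)=0$ for a.e. $\xi$, and since $\det(\alpha\cdot\xi+m\beta-a)=(|\xi|^2+m^2-a^2)^2$ vanishes only on a set of measure zero, $\widehat\varphi=0$ and $\varphi=0$, contradicting $\varphi\neq0$. To summarize, the proof splits into an exterior-region argument (Rellich plus unique continuation, using $|a|>m$ and the connectedness of $\Omega_-$) followed by a short algebraic computation on $\Sigma$ and a trivial Fourier-side argument; the exterior step is the only real obstacle.
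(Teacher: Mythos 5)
Your proof is correct. The exterior-domain argument --- Helmholtz equation with $k=\sqrt{a^2-m^2}>0$ in $\Omega_-$, Rellich to kill the solution near infinity, then real-analytic unique continuation on the connected set $\Omega_-$, followed by the algebraic computation $\bigl(1+\lambda^2/4\bigr)\varphi_+=0$ to force $\varphi_+=0$ --- is essentially identical to the paper's. Where you diverge is the final step, showing $\varphi\equiv0$ in $\Omega_+$: the paper integrates $(H-a)\varphi=0$ against a fundamental solution $\widetilde{\phi^a}$ in $\Omega_+\setminus B(x,\epsilon)$ and lets $\epsilon\to0$ to obtain a boundary representation formula $\varphi(x)=\int_\Sigma\widetilde{\phi^a}(x-z)(i\alpha\cdot N(z))\varphi_+(z)\,d\sigma(z)$, whence $\varphi_+=0$ gives $\varphi|_{\Omega_+}=0$. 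You instead observe that $\varphi_+=0$ forces $g=-\tfrac{\lambda}{2}\varphi_+=0$, so $(H-a)\varphi=g\sigma=0$ globally, and conclude on the Fourier side because $\det(\alpha\cdot\xi+m\beta-a)=(|\xi|^2+m^2-a^2)^2$ vanishes only on a sphere of Lebesgue measure zero. Your Fourier argument is more elementary and avoids the layer-potential representation altogether, at the cost of a (harmless) global viewpoint. One cosmetic imprecision: Rellich's uniqueness lemma is usually stated with $\int_{|x|=R}|\varphi|^2\,dS\to0$ as $R\to\infty$, not merely along a subsequence; but $\varphi\in L^2(\Omega_-)$ plus the Helmholtz equation in an exterior domain already forces the coefficients in the Hankel-function expansion to vanish (otherwise the radial $L^2$ integral diverges), so either formulation is fine here --- the paper simply records the stronger $o(1)$ estimate directly.
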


\begin{proof}
Let $a\in\R\setminus[-m,m]$ and $\varphi\in D(T)$ such that $T(\varphi)=a\varphi$. We will see that $\varphi=0$. Note that $(H-a)(\varphi)=0$ in $\Omega_-$ and hence $(-\Delta+m^2-a^2)(\varphi)=(H+a)(H-a)(\varphi)=0$. Therefore, $(\Delta+k^2)(\varphi)=0$ in $\Omega_-$ for $k=\sqrt{a^2-m^2}>0$.

Since $\varphi\in D(T)$, then $\varphi=\Phi(G+g)$ for some $G\mu+g\sigma\in\XX$ such that $\Phi_\sigma(G)=\Lambda(g)$. Using that $m>0$, it is not hard to show that 
\begin{equation*}
\int_{S^2_R}|\varphi|^2\,d\sigma_R=o(1)\qquad\text{as }R\to\infty,
\end{equation*}
where $S^2_R=\{x\in\R^3:|x|=R\}$ and $\sigma_R$ denotes the surface measure on $S^2_R$. Therefore, Rellich's lemma yields $\varphi=0$ in a neighbourhood of infinity (see \cite{Rellich}, for example), and thus $\varphi=0$ in $\Omega_-$ by unique continuation and the connectivity assumption. In particular 
$\varphi_-=0$ in $\Sigma$, and so Lemma \ref{l jump}$(i)$ and the definition of $\Lambda$ give
\begin{equation}\label{spec t2 eq2}
\begin{split}
\frac{\lambda}{2}\,\varphi_+&=\frac{\lambda}{2}(\varphi_++\varphi_-)=\lambda(\Phi_\sigma(G)+C_\sigma(g))\\
&=\lambda(\Lambda+C_\sigma)(g)
=-g=i(\alpha\cdot N)(\varphi_--\varphi_+)=-i(\alpha\cdot N)\varphi_+.
\end{split}
\end{equation}
This means that $\left(\frac{\lambda}{2}+i(\alpha\cdot N)\right)\varphi_+=0$, so
$$\left(\frac{\lambda^2}{4}+1\right)\varphi_+=\left(\frac{\lambda}{2}-i(\alpha\cdot N)\right)\left(\frac{\lambda}{2}+i(\alpha\cdot N)\right)\varphi_+=0$$
and then $\varphi_+=0$ in $\Sigma$. It only remains to check that $\varphi=0$ in $\Omega_+$, but this goes along well known lines. Since $T(\varphi)=a\varphi$, then $(H-a)(\varphi)=0$ in $\Omega_+$. If one integrates by parts on the identities
$$0=\int_{\Omega_+\setminus B(x,\epsilon)}(H-a)(\varphi)(z)\cdot\overline{\phi^a(z-x)e_j}\,d\mu(z)\quad\text{for }j=1,2,3,4,$$
where $e_1=(1,0,0,0),\ldots, e_4=(0,0,0,1)$ and $B(x,\epsilon)$ is the ball centered at $x\in\Omega_+$ and with radius $\epsilon>0$, and then one takes $\epsilon\searrow0$, one can show that
\begin{equation}\label{spec t2 eq1}
\varphi(x)=\int_\Sigma\widetilde{\phi^a}(x-z)(i\alpha\cdot N(z))\varphi_+(z)\,d\sigma(z)\quad\text{for }x\in\Omega_+,
\end{equation} 
where 
$$\widetilde{\phi^a}(y)=\frac{e^{i\sqrt{a^2-m^2}|y|}}{4\pi|y|}\left(a+m\beta
+\left(1-i\sqrt{a^2-m^2}|y|\right)\,i\alpha\cdot\frac{y}{|y|^2}\right)\quad\text{for }y\in\R^3\setminus\{0\}.$$
Since $\varphi_+=0$ in $\Sigma$, we conclude from (\ref{spec t2 eq1}) that $\varphi$ vanishes in $\Omega_+$, and thus $\varphi=0$.
\end{proof}

Although the definition of $\Lambda$ does not make sense for $\lambda=0$, one can replace $\Phi_\sigma(G)=\Lambda(g)$ by $\lambda\Phi_\sigma(G)=-(1+\lambda C_\sigma)(g)$ in the definition of $D(T)$ in (\ref{domain}), in order to recover the free Dirac operator $H$ when $\lambda=0$. It is well known that $H$ does not have eigenvalues in $\R\setminus[-m,m]$, so Theorem \ref{spec t2} also holds for $\lambda=0$.

With the same arguments, one can check that Theorem \ref{spec t2} also holds for other $L^2(\sigma)^4$-valued potentials different from $V_\lambda$, as far as they yield a suitable $\Lambda$ for which a relation like (\ref{spec t2 eq2}) implies that $\varphi_+=0$ in $\Sigma$.

\section{The sphere: point spectrum and related inequalities}\label{s sphere}
This section is focused on the coupling $H+V_\lambda$ given in Section \ref{ss electro} (see (\ref{domain})), and mostly in the case that $\Sigma$ is the sphere. However, the following two lemmata hold for any $\Sigma$ and $\sigma$ as in Section \ref{s preliminaries}. First, we need some definitions.

For $a\in[-m,m]$ and $\widetilde\sigma=(\sigma_1,\sigma_2,\sigma_3)$, where the $\sigma_j$'s are the family of Pauli matrices (see (\ref{freedirac})), define 
\begin{equation*}
k^a(x)=\frac{e^{-\sqrt{m^2-a^2}|x|}}{4\pi|x|}\,I_2
\quad\text{and}\quad
w^a(x)=\frac{e^{-\sqrt{m^2-a^2}|x|}}{4\pi|x|^3}
\left(1+\sqrt{m^2-a^2}|x|\right)\,i\,\widetilde{\sigma}\cdot x
\end{equation*}
for $x\in\R^3\setminus\{0\}$. Given $f\in L^2(\sigma)^2$ and $x\in \Sigma$, set
\begin{equation*}
K^a(f)(x)=\int k^a(x-z)f(z)\,d\sigma(z) 
\quad\text{and}\quad
W^a(f)(x)=\lim_{\epsilon\searrow0}\int_{|x-z|>\epsilon} w^a(x-z)f(z)\,d\sigma(z).
\end{equation*}
That $K^a$ and $W^a$ are bounded operators in $L^2(\sigma)^2$ can be verified similarly to the case of $C^a_\sigma$ in $L^2(\sigma)^4$, we omit the details. Moreover, note that 
\begin{equation}\label{sphere eq1}
C_\sigma^a
=\left(\begin{array}{cc}  (a+m)K^a& W^a\\
W^a & (a-m)K^a \end{array}\right).
\end{equation}

\begin{lema}\label{sphere l1}
For $a\in[-m,m]$, the following hold:
\begin{itemize}
\item[$(i)$] the anticommutator $\left\{(\widetilde\sigma\cdot N)K^a,(\widetilde\sigma\cdot N)W^a\right\}$ vanishes identically,
\item[$(ii)$] $((\widetilde\sigma\cdot N)W^a)^2+(a^2-m^2)((\widetilde\sigma\cdot N)K^a)^2=-1/4$.
\end{itemize}
\end{lema}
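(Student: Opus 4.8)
The plan is to derive both identities directly from the block structure of $C_\sigma^a$ in \eqref{sphere eq1} together with Lemma~\ref{l jump}$(ii)$, which in this $2$-dimensional Pauli setting reads $-4((\alpha\cdot N)C_\sigma^a)^2 = I_4$. The key observation is that $\alpha\cdot N$ acts on $\C^4=\C^2\oplus\C^2$ by the off-diagonal block matrix with entries $\widetilde\sigma\cdot N$, so that
\begin{equation*}
(\alpha\cdot N)C_\sigma^a
=\left(\begin{array}{cc} (\widetilde\sigma\cdot N)W^a & (a-m)(\widetilde\sigma\cdot N)K^a\\
(a+m)(\widetilde\sigma\cdot N)K^a & (\widetilde\sigma\cdot N)W^a \end{array}\right).
\end{equation*}
Squaring this $2\times 2$ operator-matrix and comparing with $-\tfrac14 I_4$ block by block is the whole computation.

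First I would write out $\bigl((\alpha\cdot N)C_\sigma^a\bigr)^2$ explicitly. The diagonal blocks both equal $((\widetilde\sigma\cdot N)W^a)^2 + (a^2-m^2)((\widetilde\sigma\cdot N)K^a)^2$, and Lemma~\ref{l jump}$(ii)$ forces this to equal $-\tfrac14 I_2$, which is exactly statement $(ii)$. The two off-diagonal blocks equal $(a-m)\bigl\{(\widetilde\sigma\cdot N)W^a,(\widetilde\sigma\cdot N)K^a\bigr\}$ and $(a+m)\bigl\{(\widetilde\sigma\cdot N)W^a,(\widetilde\sigma\cdot N)K^a\bigr\}$ respectively (up to the obvious ordering, but the anticommutator is symmetric in its arguments), and Lemma~\ref{l jump}$(ii)$ forces both to vanish. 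Since $m>0$, at least one of $a-m$, $a+m$ is nonzero, so the anticommutator itself vanishes, giving $(i)$. Actually both coefficients are nonzero whenever $a\in(-m,m)$; the endpoint cases $a=\pm m$ are handled by noting one still has a nonzero coefficient, or by a direct continuity/limiting argument since $K^a$ and $W^a$ depend continuously on $a$ in operator norm.

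I should be a little careful about one bookkeeping point: in the product of the two matrices the off-diagonal entries are $(\widetilde\sigma\cdot N)W^a\cdot(a-m)(\widetilde\sigma\cdot N)K^a + (a-m)(\widetilde\sigma\cdot N)K^a\cdot(\widetilde\sigma\cdot N)W^a$, which is $(a-m)$ times the anticommutator, so no asymmetry issue arises; similarly for the other off-diagonal block with factor $(a+m)$. Likewise for the diagonal blocks the cross terms $(a-m)(a+m)((\widetilde\sigma\cdot N)K^a)^2$ combine cleanly. So the identification with $-\tfrac14 I_4$ is immediate once the matrix product is written down.

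The main obstacle here is essentially nonexistent beyond this routine block-matrix algebra; the only genuine input is Lemma~\ref{l jump}$(ii)$, which is quoted. If I wanted a self-contained alternative not relying on Lemma~\ref{l jump}$(ii)$, the harder route would be to prove these two Pauli-matrix identities directly from the kernels $k^a$ and $w^a$ using the Plemelj jump relations for $K^a$ and $W^a$ and the algebraic identities satisfied by the $\sigma_j$ (namely $\sigma_j\sigma_k+\sigma_k\sigma_j=2\delta_{jk}$), but since Lemma~\ref{l jump} is available the block-decomposition argument is the clean path and I would present only that.
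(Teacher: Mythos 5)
Your proof is correct and is essentially identical to the paper's: both multiply $C_\sigma^a$ by the off-diagonal block form of $\alpha\cdot N$, square the resulting $2\times 2$ block-operator matrix, and identify the diagonal entries with $-\tfrac14 I_2$ (giving $(ii)$) and the off-diagonal entries $(a\mp m)\{(\widetilde\sigma\cdot N)K^a,(\widetilde\sigma\cdot N)W^a\}$ with $0$ (giving $(i)$, since $m>0$ ensures $a-m$ and $a+m$ cannot both vanish). The paper leaves the nonvanishing-coefficient point implicit; your short observation about $a=\pm m$ is a harmless extra.
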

\begin{proof}
From Lemma \ref{l jump}$(ii)$ and 
(\ref{sphere eq1}) we have
\begin{equation*}
\begin{split}
&\left(\begin{array}{cc}  -1/4&0\\
0 & -1/4 \end{array}\right)
=\left(\left(\begin{array}{cc}  0& \widetilde\sigma\cdot N\\
\widetilde\sigma\cdot N & 0 \end{array}\right)
\left(\begin{array}{cc}  (a+m)K^a& W^a\\
W^a & (a-m)K^a \end{array}\right)\right)^2\\
&=\left(\begin{array}{cc}  ((\widetilde\sigma\cdot N)W^a)^2+(a^2-m^2)((\widetilde\sigma\cdot N)K^a)^2 & 
(a-m)\{(\widetilde\sigma\cdot N)K^a,(\widetilde\sigma\cdot N)W^a\} \\
(a+m)\{(\widetilde\sigma\cdot N)K^a,(\widetilde\sigma\cdot N)W^a\} & ((\widetilde\sigma\cdot N)W^a)^2+(a^2-m^2)((\widetilde\sigma\cdot N)K^a)^2 \end{array}\right),
\end{split}
\end{equation*}
and the lemma follows.
\end{proof}

\begin{lema}\label{sphere l2}
$K^a$ is a positive operator in $L^2(\sigma)^2$ for all $a\in(-m,m)$. 
\end{lema}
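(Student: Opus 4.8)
The plan is to show that the kernel $k^a(x) = \frac{e^{-\sqrt{m^2-a^2}|x|}}{4\pi|x|}I_2$ defines a positive operator by exhibiting its Fourier transform as a positive multiplier. Since $K^a$ is a convolution operator (restricted to $\Sigma$) with the scalar kernel $\psi_a(x) = e^{-\sqrt{m^2-a^2}|x|}(4\pi|x|)^{-1}$ times $I_2$, it suffices to treat the scalar kernel. Recall from the proof of Lemma \ref{resolvent} that $\psi_a$ is the fundamental solution of the Helmholtz operator $-\Delta + (m^2-a^2)$ in $\R^3$; hence its Fourier transform is $\widehat{\psi_a}(\xi) = (|\xi|^2 + m^2 - a^2)^{-1}$, which is strictly positive for all $\xi\in\R^3$ precisely because $a\in(-m,m)$ forces $m^2 - a^2 > 0$.

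First I would make precise the sense in which $K^a$ is positive: for $f\in L^2(\sigma)^2$, one wants $\langle K^a f, f\rangle_\sigma \geq 0$, with the natural reading via the double integral $\int\int \psi_a(x-z)\, f(z)\cdot\overline{f(x)}\,d\sigma(z)\,d\sigma(x)$. The standard device is to write the positive-definite function $\psi_a$ on $\R^3$ as an average of exponentials: either via the subordination formula $e^{-t\sqrt{s}} = \frac{t}{2\sqrt\pi}\int_0^\infty u^{-3/2} e^{-t^2/(4u)} e^{-su}\,du$ applied with $s = |\xi|^2 + m^2 - a^2$, or more directly by writing $\psi_a(x) = c\int_{\R^3} \frac{e^{ix\cdot\xi}}{|\xi|^2 + m^2 - a^2}\,d\xi$ and interchanging integrals. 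Plugging this into the double integral over $\Sigma\times\Sigma$ gives $\langle K^a f, f\rangle_\sigma = c\int_{\R^3} \frac{1}{|\xi|^2+m^2-a^2}\,\big|\widehat{f\sigma}(\xi)\big|^2\,d\xi \geq 0$, where $\widehat{f\sigma}(\xi) = \int e^{-iz\cdot\xi} f(z)\,d\sigma(z)$. Positivity is then manifest, and strict positivity (if desired) would follow from injectivity of $f\mapsto \widehat{f\sigma}$.

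The main technical point to justify carefully is the interchange of the $\R^3$-integration (over $\xi$) with the two $\sigma$-integrations over the compact surface $\Sigma$; this needs an absolute-convergence or approximation argument, since the Fourier integral for $\psi_a$ is only conditionally convergent in three dimensions. I would handle this by a regularization: replace $\psi_a$ by $\psi_a * \rho_\delta$ for a smooth approximate identity $\rho_\delta$ (or equivalently insert a factor $e^{-\delta|\xi|^2}$ in the Fourier multiplier), for which Fubini applies without difficulty and positivity is immediate, and then let $\delta\searrow 0$ using the $L^2(\sigma)$-boundedness already recorded for $K^a$ (the kernel $\psi_a$ is weakly singular since $\sigma$ is $2$-dimensional, so the convergence $K^{a}_\delta f \to K^a f$ in $L^2(\sigma)^2$ is routine).

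I expect the interchange/regularization step to be the only real obstacle; everything else is bookkeeping. An alternative, perhaps cleaner, route avoids Fourier analysis entirely: since $\psi_a$ is, up to the positive constant $(4\pi)^{-1}$, the Green kernel of $-\Delta + \kappa^2$ with $\kappa = \sqrt{m^2-a^2}>0$ on $\R^3$, one has the identity $\int_{\R^3}\int_{\R^3} \psi_a(x-z) h(z)\overline{h(x)}\,dz\,dx = \int_{\R^3} \big(|\nabla u|^2 + \kappa^2|u|^2\big)\,dx \geq 0$ where $u$ solves $(-\Delta+\kappa^2)u = h$; applying this with $h$ an approximation of $f\sigma$ (mollified surface measure) and passing to the limit gives positivity of $K^a$ directly. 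Either way, the heart of the matter is that $m^2 - a^2 > 0$ on $(-m,m)$, which makes the relevant symbol strictly positive.
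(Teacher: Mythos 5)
Your primary argument and the paper's proof go by different routes; interestingly, the ``alternative, perhaps cleaner, route'' you sketch at the end is essentially the paper's actual proof. Your main proof is Fourier-analytic: you observe that $\psi_a\in L^1(\R^3)$ has $\widehat{\psi_a}(\xi)=(c|\xi|^2+m^2-a^2)^{-1}>0$ because $a\in(-m,m)$, so $\psi_a$ is a positive-definite kernel, and you correctly flag that the only nontrivial step is to justify $\langle K^a f,f\rangle_\sigma=\int \widehat{\psi_a}(\xi)\,|\widehat{f\sigma}(\xi)|^2\,d\xi$ via a Gaussian regularization (insert $e^{-\delta|\xi|^2}$, apply Fubini for the smoothed kernel, pass to the limit using the $L^2(\sigma)$-boundedness of $K^a$). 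This is sound. The paper instead stays entirely within its layer-potential framework: it forms the single layer $u=k^a*(f\sigma)$, notes that $(-\Delta+m^2-a^2)u=0$ off $\Sigma$ with $\Tr_\sigma u=K^a(f)$, uses the jump relation $(\nabla u)_+\cdot N-(\nabla u)_-\cdot N=f$ (the analogue of \cite[Lemma 3.3$(i)$]{AMV}), and integrates by parts over $\Omega_+\cup\Omega_-$ to arrive at $\langle K^a f,f\rangle_\sigma=\int_{\Omega_+}|\nabla u|^2+\int_{\Omega_-}|\nabla u|^2+(m^2-a^2)\int_{\R^3}|u|^2\geq0$. The trade-off: your Fourier argument is shorter and uses almost nothing about $\Sigma$ beyond $\sigma$ being a compactly supported finite measure, whereas the paper's energy identity is more in keeping with the machinery it has already set up (traces, non-tangential boundary values, jump formulae) and yields an explicit Dirichlet-type expression for the quadratic form rather than a Fourier-side positivity statement. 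One small quibble with your alternative route: mollifying $f\sigma$ and passing to the limit is one way to execute it, but the paper avoids mollification entirely by invoking the jump formula across $\Sigma$; in a surface-measure setting the jump formula is the cleaner tool, so if you were to write up that route you'd do well to use it directly rather than approximating $f\sigma$ by absolutely continuous measures.
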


\begin{proof}
We want to verify that $\int K^a(f)\cdot \overline f\,d\sigma\geq0$ for all $f\in L^2(\sigma)^2$. If we set 
$u(x)=\int k^a(x-z)f(z)\,d\sigma(z)$ for $x\in\R^3$, it is not hard to check that $u\in L^2(\mu)^2$ and that it satisfies
\begin{equation}\label{sphere l2 eq1}
\left\{
\begin{split}
&(-\Delta+m^2-a^2)u=0\quad\text{in }\R^3\setminus\Sigma,\\
&\Tr_\sigma(u)=K^a(f)\in L^2(\sigma)^2.
\end{split}
\right.
\end{equation}
Moreover, since $\nabla k^a(x)=-k^a(x)(1+\sqrt{m^2-a^2}|x|)x/|x|^2$ for all $x\in\R^3\setminus\{0\}$, a proof analogous to the one of \cite[Lemma 3.3$(i)$]{AMV} shows that
\begin{equation}\label{sphere l2 eq2}
(\nabla u)_+\cdot N-(\nabla u)_-\cdot N= f,
\end{equation}
where $(\nabla u)_\pm$ denote the boundary values of $\nabla u$ when one approaches to $\Sigma$ non-tangentially from $\Omega_\pm$. Therefore, using (\ref{sphere l2 eq1}), (\ref{sphere l2 eq2}), and the divergence theorem, we conclude
\begin{equation*}
\begin{split}
\int K^a(f)\cdot \overline f\,d\sigma&=\int_{\Sigma} \Tr_\sigma(u)\cdot\overline{(\left(\nabla u)_+\cdot N-(\nabla u)_-\cdot N\right)}\,d\sigma\\
&=\int_{\Omega_+}(|\nabla u|^2+u\Delta\overline{u})\,d\mu
+\int_{\Omega_-}(|\nabla u|^2+u\Delta\overline{u})\,d\mu\\
&=\int_{\Omega_+}|\nabla u|^2\,d\mu
+\int_{\Omega_-}|\nabla u|^2\,d\mu+(m^2-a^2)\int_{\R^3}|u|^2\,d\mu
\geq0.
\end{split}
\end{equation*}
\end{proof}

\subsection{An uncertainty principle on the sphere}
Throughout this section we set $\Omega_+=\{x\in\R^3:|x|<1\}$, $\Sigma=S^2$, $\sigma$ denotes the surface measure on $S^2$, and $N(x)=x$ for $x\in S^2$. We also use the notation of \cite[Section 4.6.4]{Thaller}. 

Let $Y^l_n$ be the usual spherical harmonics. They are defined for $n=0,1,2,\ldots$, and $l=-n,-n+1,\ldots,n$, and they satisfy $\Delta_{S^2}Y^l_n=n(n+1)Y^l_n$, where $\Delta_{S^2}$ denotes the usual spherical laplacian. Moreover, $Y^l_n$ form a complete orthonormal set in $L^2(\sigma)$.

For $j=1/2,3/2,5/2,\ldots,$ and $m_j=-j,-j+1,\ldots,j$, set
\begin{equation*}
\begin{split}
\psi_{j-1/2}^{m_j}&=\frac{1}{\sqrt{2j}}
\left(\begin{array}{c} \sqrt{j+m_j}\, Y_{j-1/2}^{m_j-1/2}\\
\sqrt{j-m_j}\, Y_{j-1/2}^{m_j+1/2} \end{array}\right)
\quad\text{and}\\
\psi_{j+1/2}^{m_j}&=\frac{1}{\sqrt{2j+2}}
\left(\begin{array}{c} \sqrt{j+1-m_j}\, Y_{j+1/2}^{m_j-1/2}\\
-\sqrt{j+1+m_j}\, Y_{j+1/2}^{m_j+1/2} \end{array}\right).
\end{split}
\end{equation*}
Then $\psi_{j\pm1/2}^{m_j}$ form a complete orthonormal set in $L^2(\sigma)^2$, and
\begin{equation}\label{uncert eq1}
(\widetilde\sigma\cdot N)\psi_{j\pm1/2}^{m_j}=\psi_{j\mp1/2}^{m_j}
\quad\text{and}\quad
(1+\widetilde\sigma\cdot L)\psi_{j\pm1/2}^{m_j}=\pm(j+1/2)\psi_{j\pm1/2}^{m_j},
\end{equation}
where $L=-ix\times\nabla$ (see \cite[equation (4.121) and the remark in page 127]{Thaller}).

\begin{lema}\label{sphere l3}
Given $a\in(-m,m)$, there exist positive numbers $d_{j\pm1/2}$ and purely imaginary numbers $p_{j\pm1/2}$ for all $j=1/2,3/2,5/2,\ldots,$ and $m_j=-j,-j+1,\ldots,j$, such that:
\begin{itemize}
\item[$(i)$] $K^a\left(\psi_{j\pm1/2}^{m_j}\right)=d_{j\pm1/2}\,\psi_{j\pm1/2}^{m_j}$ and\,
$\lim_{j\to\infty}d_{j\pm1/2}=0$. Moreover, 
$$0\leq d_{j\pm1/2}\leq d_0=\frac{1-e^{-2\sqrt{m^2-a^2}}}{2\sqrt{m^2-a^2}}.$$
\item[$(ii)$] $W^a\left(\psi_{j\pm1/2}^{m_j}\right)=p_{j\pm1/2}\,\psi_{j\mp1/2}^{m_j}$ and \,$p_{j+1/2}=-p_{j-1/2}$. Moreover,
$$|p_{j\pm1/2}|^2=\frac{1}{4}-(m^2-a^2)d_{j+1/2}\,d_{j-1/2}\geq
\frac{1}{4}\,e^{-2\sqrt{m^2-a^2}}\left(2-e^{-2\sqrt{m^2-a^2}}\right).$$
\end{itemize}
\end{lema}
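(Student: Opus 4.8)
The plan is to diagonalise $K^a$ and $W^a$ in the orthonormal basis $\{\psi^{m_j}_{j\pm1/2}\}$ of $L^2(\sigma)^2$ and then read off the asserted relations from Lemma \ref{sphere l1}; throughout write $\kappa=\sqrt{m^2-a^2}\in(0,m]$. First consider $K^a$. For $x,z\in S^2$ one has $|x-z|^2=2-2\,x\cdot z$, so its kernel $k^a(x-z)=\frac{e^{-\kappa|x-z|}}{4\pi|x-z|}I_2$ depends on $x,z$ only through $x\cdot z$; hence $K^a$ commutes with rotations of $S^2$ and acts as a scalar $d_n$ on the spherical harmonics of each fixed degree $n$. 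Since both entries of $\psi^{m_j}_{j\pm1/2}$ are spherical harmonics of degree $j\pm1/2$, each $\psi^{m_j}_{j\pm1/2}$ is an eigenfunction with eigenvalue $d_{j\pm1/2}$, where $n=j\pm1/2$ ranges over all non-negative integers. That $d_{j\pm1/2}\ge0$ is Lemma \ref{sphere l2}, and $d_{j\pm1/2}=0$ is impossible, since by the energy identity in the proof of Lemma \ref{sphere l2} it would force the layer potential $u$ attached to the relevant harmonic to vanish, contradicting that its normal-derivative jump across $S^2$ is that nonzero harmonic; as $\{\psi^{m_j}_{j\pm1/2}\}$ is complete and $K^a$ is compact and self-adjoint on $L^2(\sigma)^2$ (a weakly singular kernel on a $\CC^2$ surface), its eigenvalues tend to $0$, so $\lim_{j\to\infty}d_{j\pm1/2}=0$. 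Applying $K^a$ to a constant function and parametrising $z$ by the geodesic angle $\theta$ to $x$ (so $|x-z|=2\sin(\theta/2)$), the substitution $t=\sin(\theta/2)$ gives $\int_{S^2}\frac{e^{-\kappa|x-z|}}{4\pi|x-z|}\,d\sigma(z)=\int_0^1e^{-2\kappa t}\,dt=\frac{1-e^{-2\kappa}}{2\kappa}$ for every $x\in S^2$, which identifies $d_0=\frac{1-e^{-2\kappa}}{2\kappa}$ and, the kernel being nonnegative, shows by Schur's test that $\|K^a\|\le d_0$, hence $0\le d_{j\pm1/2}\le d_0$. This is $(i)$.

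Next consider $W^a$, with kernel $w^a(x-z)=\frac{e^{-\kappa|x-z|}(1+\kappa|x-z|)}{4\pi|x-z|^3}\,i\,\widetilde\sigma\cdot(x-z)$. For any $R\in SO(3)$ with spin lift $U_R$ one checks, using $|R^{-1}y|=|y|$ and $U_R(\widetilde\sigma\cdot v)U_R^{-1}=\widetilde\sigma\cdot Rv$, that $U_R\,w^a(R^{-1}y)\,U_R^{-1}=w^a(y)$; hence $W^a$ commutes with the representation $f(z)\mapsto U_Rf(R^{-1}z)$, under which the $\psi^{m_j}_{j\pm1/2}$ transform, and for each $j$ it block-diagonalises on the two-dimensional multiplicity space spanned by $\psi^{m_j}_{j-1/2}$ and $\psi^{m_j}_{j+1/2}$. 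Since $w^a(-y)=-w^a(y)$, $W^a$ anticommutes with the antipodal map $f(z)\mapsto f(-z)$, which acts with opposite sign on $\psi^{m_j}_{j-1/2}$ and $\psi^{m_j}_{j+1/2}$; together with the self-adjointness of $W^a$ this forces the $2\times 2$ block to be off-diagonal, i.e.\ $W^a\psi^{m_j}_{j\pm1/2}=p_{j\pm1/2}\,\psi^{m_j}_{j\mp1/2}$ for scalars $p_{j\pm1/2}$ depending only on $j$.

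To finish, I would feed this spectral data into Lemma \ref{sphere l1}. On $S^2$ we have $\widetilde\sigma\cdot N=\widetilde\sigma\cdot x=:\omega$, with $\omega^2=1$ and $\omega\psi^{m_j}_{j\pm1/2}=\psi^{m_j}_{j\mp1/2}$ by (\ref{uncert eq1}). Inserting the actions of $\omega,K^a,W^a$ into Lemma \ref{sphere l1}$(ii)$ applied to $\psi^{m_j}_{j\pm1/2}$ yields $p_{j\pm1/2}^2+(a^2-m^2)d_{j+1/2}d_{j-1/2}=-\frac14$; since $0\le d_{j\pm1/2}\le d_0$ gives $(m^2-a^2)d_{j+1/2}d_{j-1/2}\le\kappa^2d_0^2=\frac{(1-e^{-2\kappa})^2}{4}<\frac14$, the $p_{j\pm1/2}$ are purely imaginary and
\begin{equation*}
|p_{j\pm1/2}|^2=\frac14-(m^2-a^2)d_{j+1/2}d_{j-1/2}\ \ge\ \frac14-\kappa^2d_0^2=\frac14\,e^{-2\kappa}\bigl(2-e^{-2\kappa}\bigr),
\end{equation*}
while Lemma \ref{sphere l1}$(i)$ applied to $\psi^{m_j}_{j+1/2}$ gives $d_{j+1/2}(p_{j+1/2}+p_{j-1/2})=0$, so $p_{j+1/2}=-p_{j-1/2}$ because $d_{j+1/2}>0$; this proves $(ii)$. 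The hard part here is the middle step: because $W^a$ is a genuine principal-value operator whose density $\widetilde\sigma\cdot(x-z)$ absorbs part of the singularity, one cannot simply ``factor out $\widetilde\sigma\cdot x$'', and the equivariance-plus-parity argument seems to be the cleanest way to see that $W^a$ interchanges the families $\{\psi^{m_j}_{j-1/2}\}$ and $\{\psi^{m_j}_{j+1/2}\}$; the quantitative bound $d_{j\pm1/2}\le d_0$ of Step 1 and the algebra with Lemma \ref{sphere l1} are then routine.
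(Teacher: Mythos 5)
Your proof is correct, and it takes a genuinely different and somewhat more transparent route than the paper's in both halves. For part $(i)$ the paper diagonalizes $K^a$ by passing to Fourier transforms of an approximating family $h_\epsilon\mu$ and invoking Stein's result on Fourier transforms of spherical-harmonic densities; you instead observe that $k^a(x-z)$ is a function of $x\cdot z$ and use rotation invariance (Funk--Hecke), which is shorter and avoids the limiting argument. You also fill a small gap: the paper's lemma asserts \emph{positive} $d_{j\pm1/2}$, but the paper's proof only shows $d_{j\pm1/2}\geq0$; your energy-identity argument (from Lemma \ref{sphere l2}) that $d_{j\pm1/2}=0$ would force the single-layer potential $u$ to vanish identically, contradicting its nonzero normal-derivative jump (\ref{sphere l2 eq2}), gives the strict inequality. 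For part $(ii)$ the paper uses the jump formula for $-i(\widetilde\sigma\cdot\nabla)u$ together with the splitting $\widetilde\sigma\cdot\nabla=(\widetilde\sigma\cdot N)\partial_r-\tfrac1r(\widetilde\sigma\cdot N)(\widetilde\sigma\cdot L)$ and (\ref{uncert eq1}) to exhibit the off-diagonal structure explicitly; you instead derive it from spin-rotation equivariance of $w^a$ plus the anticommutation of $W^a$ with the antipodal map (since $w^a$ is odd), which via the opposite parities of $Y^l_{j\mp1/2}$ forces the $2\times2$ block on each $(j,m_j)$ multiplicity space to be off-diagonal. Both approaches then feed into Lemma \ref{sphere l1}$(ii)$ identically to get the formula for $|p_{j\pm1/2}|^2$. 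Your derivation of $p_{j+1/2}=-p_{j-1/2}$ from Lemma \ref{sphere l1}$(i)$ and $d_{j+1/2}>0$ is precisely the alternative route the paper records in its closing remark (``Note that, if we know that $d_{j\pm1/2}\neq0$ for all $j$\ldots''), whereas the paper's main argument (\ref{uncert l1 eq5}) uses self-adjointness of $W^a$ and $\widetilde\sigma\cdot N$ directly and does not need $d_{j\pm1/2}>0$. Your symmetry-based approach is more conceptual; the paper's computation is more elementary and extends more readily to verifying, e.g., continuity of $f_{j\pm1/2}'$ at $r=1$, which the paper actually needs in (\ref{uncert l1 eq2}).
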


\begin{proof} For any $n$ and $l$, we identify  the spherical harmonic $Y^l_n$ with its homogeneous extension of degree $0$ to $\R^3\setminus\{0\}$. That is, $Y^l_n(x)=Y^l_n(x/|x|)$ for all $x\in\R^3\setminus\{0\}$. In particular, $|x|^nY_n^l(x)$ is a homogeneous polynomial of degree $n$ which is harmonic. We use the same identification for $\psi^{m_j}_{j\pm1/2}$.

{\em Proof of $(i)$}.
In order to prove the first identity in $(i)$, fix $j$ and $m_j$ and set
\begin{equation}\label{uncert l1 eq11}
u(x)=\int k^a(x-z)\psi^{m_j}_{j\pm1/2}(z)\,d\sigma(z)\quad\text{for $x\in\R^3$.}
\end{equation}

Given $\epsilon>0$, we define $h_\epsilon(x)=\epsilon^{-1}\chi_{(1-\epsilon/2,1+\epsilon/2)}(|x|)$. It is easy to verify that $h_\epsilon\mu$ converges to $\sigma$ in the weak$^*$ topology when $\epsilon\to 0$. In particular, since $\sigma$ and $h_\epsilon\mu$ have compact support and $k^a$ is continuous in $\R^3\setminus\{0\}$ and has exponential decay at infinity, it is not hard to show that actually $u=\lim_{\epsilon\to0}u_\epsilon$ in $L^2(\mu)^2$, where
\begin{equation}\label{uncert l1 eq6}
u_\epsilon(x)=\int k^a(x-z)\psi^{m_j}_{j\pm1/2}(z)h_\epsilon(z)\,d\mu(z)
=k^a*\left(\psi^{m_j}_{j\pm1/2}h_\epsilon\right).
\end{equation}
The term on the right hand side of last equality in (\ref{uncert l1 eq6}) denotes the usual convolution of (matrix and vectors of) functions in $L^2(\mu)$. Applying the Fourier transform to (\ref{uncert l1 eq6}) and using that $k^a$ is a fundamental solution of $(-\Delta+m^2-a^2)I_2$, we obtain
\begin{equation*}
\FF(u_\epsilon)(\xi)
=(4\pi^2|\xi|^2+m^2-a^2)^{-1}\FF\left(\psi^{m_j}_{j\pm1/2}h_\epsilon\right)(\xi).
\end{equation*}
Note that, for any $0<\epsilon<1$, $|x|^{-(j\pm1/2)}h_\epsilon(x)$ is a bounded radial function with compact support, thus \cite[Corollary in page 72]{Stein} shows that 
$$\FF\left(\psi^{m_j}_{j\pm1/2}h_\epsilon\right)(\xi)
=\FF\left(|x|^{j\pm1/2}\psi^{m_j}_{j\pm1/2}(x)\,|x|^{-(j\pm1/2)}h_\epsilon(x)\right)(\xi)
=|\xi|^{j\pm1/2}\psi^{m_j}_{j\pm1/2}(\xi)\,g_\epsilon(\xi)$$ for some radial function $g_\epsilon$ depending on $j\pm1/2$ but not on $m_j$. Hence,
\begin{equation}\label{uncert l1 eq7}
\FF(u_\epsilon)(\xi)
=(4\pi^2|\xi|^2+m^2-a^2)^{-1}g_\epsilon(\xi)\,|\xi|^{j\pm1/2}
\psi^{m_j}_{j\pm1/2}(\xi).
\end{equation}
Since $(4\pi^2|\xi|^2+m^2-a^2)^{-1}g_\epsilon(\xi)$ is also a radial function, we can use that $\FF^4$ is the identity operator and \cite[Corollary of page 72]{Stein} in (\ref{uncert l1 eq7}) to deduce that 
\begin{equation}\label{uncert l1 eq8}
u_\epsilon=f_\epsilon
\psi^{m_j}_{j\pm1/2}
\end{equation}
for some radial function $f_\epsilon$ depending on $j\pm1/2$ but not on $m_j$. 
Finally, using that $u=\lim_{\epsilon\to0}u_\epsilon$ and (\ref{uncert l1 eq8}), we conclude that 
\begin{equation}\label{uncert l1 eq10}
u=f_{j\pm1/2}\psi^{m_j}_{j\pm1/2}
\end{equation} for some radial function $f_{j\pm1/2}$. 
We already know that $K^a$ is a bounded operator in $L^2(\sigma)^2$, and since $k^a(x)=O(1/|x|)I_2$ for $|x|\to0$, one can check that 
\begin{equation}\label{uncert l1 eq9}
K^a\left(\psi^{m_j}_{j\pm1/2}\right)=\Tr_\sigma(u)
=\Tr_\sigma\left(f_{j\pm1/2}\psi^{m_j}_{j\pm1/2}\right),
\end{equation}
which implies that $f_{j\pm1/2}(r)$ is continuous at $r=1$. Then, by setting
$d_{j\pm1/2}=f_{j\pm1/2}(1)$, (\ref{uncert l1 eq9}) shows that $$K^a\left(\psi_{j\pm1/2}^{m_j}\right)
=d_{j\pm1/2}\,\psi_{j\pm1/2}^{m_j}.$$

Concerning the second statement in $(i)$, since $k^a(x)=O(1/|x|)I_2$ for $|x|\to0$, it is easy to check that $K^a$ is a compact operator in $L^2(\sigma)^2$, so the eigenvalues of $K^a$ form a bounded sequence which has $\{0\}$ as the only possible accumulation point (see \cite[Fredholm's Theorem (0.38)$(a)$]{Folland}, for example). Therefore, $\lim_{j\to\infty}d_{j\pm1/2}=0$. 

Let us now prove the last statement in $(i)$. From Lemma \ref{sphere l2}, $K^a$ is a positive operator, which implies that $d_{j\pm1/2}\geq0$ for all $j$. Moreover, 
$d_{j\pm1/2}\leq\|K^a\|_{L^2(\sigma)^2\to L^2(\sigma)^2}$. Following \cite[Generalized Young's Inequality (0.10)]{Folland} and since $S^2$ is invariant under rotations, it is easy to see that 
$$\|K^a\|_{L^2(\sigma)^2\to L^2(\sigma)^2}\leq\|k^a(\cdot-e_3)\|_{L^1(S^2)},$$ where $e_3=(0,0,1)$ (we have identified the matrix $k^a$ with its scalar version). Consider the change of variables to polar coordinates in $S^2$
\begin{equation}\label{polar}
\left\{
\begin{split}
&x=(\sin\varphi\cos\theta,\sin\varphi\sin\theta,\cos\varphi)\quad\text{for }0\leq\theta\leq2\pi,\, 0\leq\varphi\leq\pi,\\
&d\sigma(x)=\sin\varphi\,d\varphi d\theta.
\end{split}
\right.
\end{equation}
Then, we have
\begin{equation*}
\begin{split}
\|k^a(\cdot-e_3)\|_{L^1(S^2)}
&=\int_0^{2\pi}\int_0^\pi\frac{e^{-\sqrt{2(m^2-a^2)(1-\cos\varphi)}}}{4\pi\sqrt{2(1-\cos\varphi)}}\sin\varphi\,d\varphi d\theta
=\frac{1-e^{-2\sqrt{m^2-a^2}}}{2\sqrt{m^2-a^2}},
\end{split}
\end{equation*}
where we used the change of variables $\varphi\to\sqrt{2(1-\cos\varphi)}$ in the last equality above. 

To finish the proof of $(i)$, it only remains to check that $d_0=\|k^a(\cdot-e_3)\|_{L^1(S^2)}$. Note that
\begin{equation}\label{d0 eq}
\psi_0^{1/2}=\left(\begin{array}{c} Y^0_0\\
0 \end{array}\right)
=c\left(\begin{array}{c} 1\\
0 \end{array}\right)\qquad(\text{we take }j=1/2,\,m_j=1/2),
\end{equation}
where $c>0$ is some constant. Therefore,
\begin{equation*}
d_0c\left(\begin{array}{c} 1\\0 \end{array}\right)
=d_0\psi_0^{1/2}(e_3)=K^a\left(\psi_0^{1/2}\right)(e_3)
=c\left(\begin{array}{c} \|k^a(\cdot-e_3)\|_{L^1(S^2)}\\0 \end{array}\right),
\end{equation*}
and we are done.

{\em Proof of $(ii)$}. Fix $j$ and $m_j$. Recall from (\ref{uncert l1 eq11}) and (\ref{uncert l1 eq10}) that, for $x\in\R^3$, we have
\begin{equation}\label{uncert l1 eq1}
u(x)=\int k^a(x-z)\psi^{m_j}_{j\pm1/2}(z)\,d\sigma(z)=f_{j\pm1/2}(r)\,\psi^{m_j}_{j\pm1/2}(\theta)
\end{equation}
for some function $f_{j\pm1/2}\in\CC(0,\infty)$, where $r=|x|$ and $\theta=x/|x|$. Furthermore, $f_{j\pm1/2}\in\CC^\infty((0,1)\cup(1,\infty))$ and, by similar arguments to the ones that prove that $f_{j\pm1/2}(r)$ is continuous at $r=1$, one can show that $\lim_{r\to 1^{+}}f_{j\pm1/2}'(r)$ and $\lim_{r\to 1^{-}}f_{j\pm1/2}'(r)$ exist; we omit the details. 

Since $w^a=-i(\widetilde\sigma\cdot\nabla) k^a$, similarly to (\ref{sphere l2 eq2}) one can check that
\begin{equation}\label{uncert l1 eq2}
2W^a\left(\psi_{j\pm1/2}^{m_j}\right)
=(-i(\widetilde\sigma\cdot\nabla)u)_++(-i(\widetilde\sigma\cdot\nabla)u)_-,
\end{equation}
where $(-i(\widetilde\sigma\cdot\nabla)u)_\pm$ denote the boundary values of $-i(\widetilde\sigma\cdot\nabla)u$ when one approaches to $\Sigma$ non-tangentially from $\Omega_\pm$. It is well known and quite easy to see that 
$\widetilde\sigma\cdot\nabla=(\widetilde\sigma\cdot N)\partial_r
-\frac{1}{r}\,(\widetilde\sigma\cdot N)(\widetilde\sigma\cdot L)$, where $\partial_r=N\cdot\nabla$ and $L=-ix\times\nabla$. Combining this with (\ref{uncert l1 eq2}), (\ref{uncert l1 eq1}) and (\ref{uncert eq1}), we obtain
\begin{equation*}
2W^a\left(\psi_{j\pm1/2}^{m_j}\right)
=-i\left(f_{j\pm1/2}'(1^+)+f_{j\pm1/2}'(1^-)
+2\left(1\mp\left(j+1/2\right)\right)f_{j\pm1/2}(1)\right)\psi_{j\mp1/2}^{m_j},
\end{equation*}
i.e., $W^a\left(\psi_{j\pm1/2}^{m_j}\right)=p_{j\pm1/2}\,\psi_{j\mp1/2}^{m_j}$ for some $p_{j\pm1/2}\in \C$. By $(\ref{uncert eq1})$, Lemmata \ref{sphere l1}$(ii)$ and \ref{sphere l3}$(i)$,
\begin{equation*}
\begin{split}
\left(p_{j\pm1/2}\right)^2\psi_{j\pm1/2}^{m_j}
&=((\widetilde\sigma\cdot N)W^a)^2\left(\psi_{j\pm1/2}^{m_j}\right)\\
&=\left(-1/4+(m^2-a^2)((\widetilde\sigma\cdot N)K^a)^2\right)\left(\psi_{j\pm1/2}^{m_j}\right)\\
&=\left(-1/4+(m^2-a^2)d_{j+1/2}\,d_{j-1/2}\right)
\psi_{j\pm1/2}^{m_j},
\end{split}
\end{equation*}
which yields 
\begin{equation}\label{uncert l1 eq3}
\left(p_{j\pm1/2}\right)^2=-\frac{1}{4}+(m^2-a^2)d_{j+1/2}\,d_{j-1/2}.
\end{equation}
From the last statement in $(i)$, we have
\begin{equation}\label{uncert l1 eq4}
-\frac{1}{4}+(m^2-a^2)d_{j+1/2}\,d_{j-1/2}
\leq\frac{1}{4}\left(\left(1-e^{-2\sqrt{m^2-a^2}}\right)^2-1\right)<0,
\end{equation}
thus $\left(p_{j\pm1/2}\right)^2<0$ by (\ref{uncert l1 eq3}), and that means that $p_{j\pm1/2}$ are purely imaginary numbers. The last statement in $(ii)$ follows by (\ref{uncert l1 eq3}) and (\ref{uncert l1 eq4}), so it only remains to prove that $p_{j+1/2}=-p_{j-1/2}$. For that purpose, we use the first identity in $(ii)$ and that $\widetilde\sigma\cdot N$ and $W^a$ are symmetric operators to see that
\begin{equation}\label{uncert l1 eq5}
\begin{split}
2\Re(ip_{j\pm1/2})\|\psi_{j\mp1/2}^{m_j}\|^2_{L^2(S^2)^2}
&=2\Re\left(\int_{S^2}iW^a\left(\psi_{j\pm1/2}^{m_j}\right)
\cdot\overline{(\widetilde\sigma\cdot N)\psi_{j\pm1/2}^{m_j}}\,d\sigma\right)\\
&=\int_{S^2}i\left((\widetilde\sigma\cdot N)W^a-W^a(\widetilde\sigma\cdot N)\right)
\left(\psi_{j\pm1/2}^{m_j}\right)
\cdot\overline{\psi_{j\pm1/2}^{m_j}}\,d\sigma\\
&=i(p_{j\pm1/2}-p_{j\mp1/2})\|\psi_{j\pm1/2}^{m_j}\|^2_{L^2(S^2)^2}.
\end{split}
\end{equation}
Since $\|\psi_{j\pm1/2}^{m_j}\|_{L^2(S^2)^2}=1$ and we already know that $p_{j\pm1/2}$ are purely imaginary, we obtain from (\ref{uncert l1 eq5}) that
$2ip_{j\pm1/2}=2\Re(ip_{j\pm1/2})=i(p_{j\pm1/2}-p_{j\mp1/2})$, which implies that $p_{j+1/2}=-p_{j-1/2}$. The lemma is finally proved.

Note that, if we know that $d_{j\pm1/2}\neq0$ for all $j$, the relation $p_{j+1/2}=-p_{j-1/2}$ also follows from Lemma \ref{sphere l1}$(i)$.
\end{proof}

The following theorem is based on an uncertainty principle on the sphere and goes on the lines of \cite[Theorem 1]{DDEV}.

\begin{teo}\label{thmineq}
Given $\lambda>0$ and $a\in(-m,m)$, the operator $1/\lambda+(m+a)K^a$ is invertible in $L^2(\sigma)^2$. Furthermore, for any $ f\in L^2(\sigma)^2$ and any $\delta>0$, we have
\begin{equation}\label{ineq}
\begin{split}
\int_{S^2} | f|^2\,d\sigma
&\leq \frac{1}{2M\delta}\int_{S^2} \left(1/\lambda+(m+a)K^a\right)^{-1}(W^a( f))\cdot\overline{W^a(f)}\,d\sigma\\ 
&\quad+\frac{\delta}{2M}\int_{S^2} \left(1/\lambda+(m+a)K^a\right)((\widetilde\sigma\cdot N)f)\cdot\overline{(\widetilde\sigma\cdot N) f}\,d\sigma,
\end{split}
\end{equation}
where $M=\min_{j}|p_{j\pm1/2}|\geq \frac{1}{2}\,e^{-\sqrt{m^2-a^2}}
\sqrt{2-e^{-2\sqrt{m^2-a^2}}}.$
If $j_0$ is such that $M=|p_{j_0\pm1/2}|$, then equality in $(\ref{ineq})$ holds by taking $f=\psi_{j_0\pm1/2}^{m_{j_0}}$ for any $m_{j_0}=-j_0,-j_0+1,\ldots,j_0$ and 
\begin{equation}\label{delta}
\delta=(1/\lambda+(m+a)d_{j_0\mp1/2})^{-1}M.
\end{equation}
\end{teo}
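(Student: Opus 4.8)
The plan is to diagonalize everything against the orthonormal basis $\{\psi_{j\pm1/2}^{m_j}\}$ of $L^2(\sigma)^2$. By Lemma \ref{sphere l3}$(i)$, $K^a$ acts on $\psi_{j\pm1/2}^{m_j}$ by the nonnegative scalar $d_{j\pm1/2}$, so $1/\lambda+(m+a)K^a$ acts by $1/\lambda+(m+a)d_{j\pm1/2}$; since $\lambda>0$, $a>-m$, and $d_{j\pm1/2}\geq0$, every such eigenvalue is $\geq1/\lambda>0$, and $d_{j\pm1/2}\to0$ forces these eigenvalues to accumulate only at $1/\lambda$. Hence the operator is bounded below by $1/\lambda$, self-adjoint and positive, so it is invertible in $L^2(\sigma)^2$; this is the first assertion. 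I will also note for later use that $(\widetilde\sigma\cdot N)$ swaps $\psi_{j\pm1/2}^{m_j}\leftrightarrow\psi_{j\mp1/2}^{m_j}$ by \eqref{uncert eq1}, and $W^a$ sends $\psi_{j\pm1/2}^{m_j}$ to $p_{j\pm1/2}\psi_{j\mp1/2}^{m_j}$ by Lemma \ref{sphere l3}$(ii)$, with $p_{j+1/2}=-p_{j-1/2}$ purely imaginary.

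Next I would reduce the inequality \eqref{ineq} to a scalar estimate on each two-dimensional invariant subspace. Expand $f=\sum_{j,\pm,m_j} c_{j\pm1/2}^{m_j}\psi_{j\pm1/2}^{m_j}$. On the pair spanned by $\psi_{j+1/2}^{m_j}$ and $\psi_{j-1/2}^{m_j}$, the first integrand on the right of \eqref{ineq} contributes, after using $W^a(\psi_{j\pm1/2}^{m_j})=p_{j\pm1/2}\psi_{j\mp1/2}^{m_j}$ and then applying the inverse operator (whose eigenvalue on $\psi_{j\mp1/2}^{m_j}$ is $(1/\lambda+(m+a)d_{j\mp1/2})^{-1}$), a term of the form $|p_{j\pm1/2}|^2(1/\lambda+(m+a)d_{j\mp1/2})^{-1}|c_{j\pm1/2}^{m_j}|^2$; the second integrand contributes $(1/\lambda+(m+a)d_{j\mp1/2})|c_{j\pm1/2}^{m_j}|^2$, since $(\widetilde\sigma\cdot N)$ moves the coefficient onto $\psi_{j\mp1/2}^{m_j}$. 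So, writing $e_{\pm}=1/\lambda+(m+a)d_{j\mp1/2}>0$ and $q=|p_{j\pm1/2}|>0$ (the latter independent of the sign by Lemma \ref{sphere l3}$(ii)$), the required bound on this coefficient is $|c|^2\leq\frac{1}{2M\delta}q^2 e_{\pm}^{-1}|c|^2+\frac{\delta}{2M}e_{\pm}|c|^2$, i.e. $2M\leq \frac{q^2}{\delta e_{\pm}}+\delta e_{\pm}$. By AM--GM the right side is $\geq 2q\geq 2M$ for every choice of $\delta>0$, which is exactly what we need; summing over all $j,\pm,m_j$ gives \eqref{ineq}.

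For the lower bound on $M$: by Lemma \ref{sphere l3}$(ii)$, $|p_{j\pm1/2}|^2\geq\frac14 e^{-2\sqrt{m^2-a^2}}(2-e^{-2\sqrt{m^2-a^2}})$ uniformly in $j$, which after taking square roots gives $M=\min_j|p_{j\pm1/2}|\geq\frac12 e^{-\sqrt{m^2-a^2}}\sqrt{2-e^{-2\sqrt{m^2-a^2}}}$; I should also note $M>0$ (so the $1/(2M\delta)$ factor makes sense), which follows from the same strictly positive bound. Finally, the equality case: if $j_0$ realizes the minimum so that $|p_{j_0\pm1/2}|=M$, take $f=\psi_{j_0\pm1/2}^{m_{j_0}}$ (a single basis vector), so only one coefficient is nonzero and the scalar inequality collapses to $2M\leq \frac{M^2}{\delta e} + \delta e$ with $e=1/\lambda+(m+a)d_{j_0\mp1/2}$; AM--GM is an equality precisely when $\delta e = M$, i.e. $\delta=(1/\lambda+(m+a)d_{j_0\mp1/2})^{-1}M$, which is \eqref{delta}. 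The only mild subtlety — and the step I would be most careful about — is bookkeeping the index shifts: which of $d_{j+1/2}$ or $d_{j-1/2}$ appears in $e_{\pm}$, since $W^a$ and $(\widetilde\sigma\cdot N)$ both flip the subscript by $\mp1/2$; once one checks that both the $W^a$-term and the $(\widetilde\sigma\cdot N)$-term land on $\psi_{j\mp1/2}^{m_j}$ and hence both involve $1/\lambda+(m+a)d_{j\mp1/2}$, everything matches and AM--GM closes the argument cleanly.
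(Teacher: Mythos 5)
Your argument is correct and is essentially the route the paper takes: the paper also diagonalizes $K^a$, $W^a$, and $\widetilde\sigma\cdot N$ against the basis $\psi_{j\pm 1/2}^{m_j}$, and its quantity $A$ together with the bound $A=2|p_{j\pm1/2}|\geq 2M$ followed by the $2xy\leq x^2+y^2$ split is exactly your AM--GM step on each two-dimensional invariant subspace. The only cosmetic difference is that the paper phrases the lower bound through the commutator-type expression $A$ (to highlight the uncertainty-principle flavor), whereas you pass directly to the scalar inequality $2M\leq q^2/(\delta e_\pm)+\delta e_\pm$; your bookkeeping of the $j\pm1/2\leftrightarrow j\mp1/2$ index flips matches the paper's and closes the equality case in the same way.
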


\begin{proof}
Recall that $K^a$ is a positive operator by Lemma \ref{sphere l2}, thus  $1/\lambda+(m+a)K^a$ is positive and invertible in $L^2(\sigma)^2$ for all $\lambda>0$ by \cite[Theorem 12.12$(c)$]{Rudin}, and the inverse is also a positive operator.

Since $d_{j\pm1/2}\geq0$ for all $j$ and $\lim_{j\to\infty}d_{j\pm1/2}=0$ by Lemma \ref{sphere l3}$(i)$, Lemma \ref{sphere l3}$(ii)$ shows that $|p_{j\pm1/2}|\leq1/2$ for all $j$ and 
$\lim_{j\to\infty}|p_{j\pm1/2}|=1/2$. Therefore, there exists some $j_0$ such that $|p_{j_0\pm1/2}|=\inf_{j}|p_{j\pm1/2}|$, and thus $M$ is well defined. The estimate of $M$ from below is stated in Lemma \ref{sphere l3}$(ii)$.

Given $j$ and $m_j$, we define
\begin{equation}\label{eq1}
A=\left|2\Re\left( \int_{S^2}iW^a\left(\psi_{j\pm1/2}^{m_j}\right)\cdot\overline{(\widetilde\sigma\cdot N) \psi_{j\pm1/2}^{m_j}}\,d\sigma\right)\right|.
\end{equation}
We will prove (\ref{ineq}) by estimating $A$ from above and from below.

From Lemma \ref{sphere l3} we know that $p_{j+1/2}=-p_{j-1/2}\neq 0$ are purely imaginary. Therefore, arguing as in (\ref{uncert l1 eq5}), we have
\begin{equation}\label{eq2}
\begin{split}
A=2|p_{j\pm1/2}|\|\psi_{j\pm1/2}^{m_j}\|^2_{L^2(S^2)^2}
\geq 2M\|\psi_{j\pm1/2}^{m_j}\|^2_{L^2(S^2)^2}.
\end{split}
\end{equation}

To estimate $A$ from above, we use Lemma \ref{sphere l3}, (\ref{uncert eq1}), Cauchy-Schwarz inequality, and that $2xy\leq x^2+y^2$ for all $x,y\geq0$, to deduce that
\begin{equation}\label{eq3}
\begin{split}
A&=\left|2\Re \left(\int_{S^2}ip_{j\pm1/2}\,\psi_{j\mp1/2}^{m_j}\cdot\overline{ \psi_{j\mp1/2}^{m_j}}\,d\sigma\right)\right|\\ 
&\leq \int_{S^2}\big(\delta(1/\lambda+(m+a)d_{j\mp 1/2})\big)^{-1}
\left|p_{j\pm1/2}\,\psi_{j\mp1/2}^{m_j}\right|^2\,d\sigma\\
&\quad+ \int_{S^2}\delta(1/\lambda+(m+a)d_{j\mp 1/2})\left|\psi_{j\mp1/2}^{m_j}\right|^2\,d\sigma\\ 
&=\frac{1}{\delta}\int_{S^2}\left(\left(1/\lambda+(m+a)K^a\right)^{-1}W^a\right)\left(\psi_{j\pm1/2}^{m_j}\right)\cdot
\overline{W^a\left(\psi_{j\pm1/2}^{m_j}\right)}\,d\sigma\\
&\quad+\delta\int_{S^2}\Big((1/\lambda+(m+a)K^a)(\widetilde\sigma\cdot N)\Big)\left(\psi_{j\pm1/2}^{m_j}\right)\cdot\overline{(\widetilde\sigma\cdot N)\psi_{j\pm1/2}^{m_j}}\,d\sigma.
\end{split}
\end{equation}

From (\ref{eq2}) and (\ref{eq3}), we see that (\ref{ineq}) holds for $f=\psi_{j\pm1/2}^{m_j}$. The functions $\psi_{j\pm1/2}^{m_j}$ with $j=1/2,3/2,5/2,\ldots,$ and $m_j=-j,-j+1,\ldots,j$ form a complete orthonormal system in $L^2(\sigma)^2$. Hence, to prove (\ref{ineq}) in full generality, we first write any $f\in L^2(\sigma)^2$ as a linear combination of the $\psi_{j\pm1/2}^{m_j}$'s and we expand the left and right hand side of (\ref{ineq}) in terms of this basis. Then, using the orthogonality and, for the right hand side of (\ref{ineq}), that
$$K^a\left(\psi_{j\pm1/2}^{m_j}\right)=d_{j\pm1/2}\,\psi_{j\pm1/2}^{m_j}\quad\text{and}\quad
W^a\left(\psi_{j\pm1/2}^{m_j}\right)=p_{j\pm1/2}\,\psi_{j\mp1/2}^{m_j}$$
from Lemma \ref{sphere l3}, we conclude that (\ref{ineq}) holds for all $f\in L^2(\sigma)^2$ if and only if it holds for all $\psi_{j\pm1/2}^{m_j}$. Therefore, (\ref{ineq}) is finally proved.

It only remains to check the last statement of the theorem. Let $j_0$ be such that $M=|p_{j_0\pm1/2}|$ (we already know that such $j_0$ exists).  Then, for the functions
$$\psi_{j_0\pm1/2}^{m_{j_0}}\quad\text{for any }m_{j_0}=-j_0,-j_0+1,\ldots,j_0,$$ the inequality in (\ref{eq2}) becomes an equality. Furthermore, for $\delta$ satisfying (\ref{delta}), we have
$$\frac{|p_{j_0\pm1/2}|}{\sqrt{\delta
(1/\lambda+(m+a)d_{j_0\mp 1/2})}}\,\psi_{j_0\mp1/2}^{m_{j_0}}
= \sqrt{\delta(1/\lambda+(m+a)d_{j_0\mp 1/2})}\psi_{j_0\mp1/2}^{m_{j_0}},$$
which implies that the inequality in (\ref{eq3}) is an equality for $\psi_{j_0\pm1/2}^{m_{j_0}}$.
\end{proof}

Recall from Lemma \ref{sphere l3}$(ii)$ that 
$|p_{j+1/2}|=|p_{j-1/2}|$ for all $j$. Hence, for any $j_0$ such that $M=|p_{j_0\pm1/2}|$, we have two possible elections of the subindex, say $j_0+1/2$ and $j_0-1/2$, and therefore two possible values of $\delta$ for which equality  in (\ref{ineq}) holds. Hence, we get two (a priori different) sharp inequalities. The same observation applies if such $j_0$ is not unique.

Theorem \ref{thmineq} has an interesting consequence concerning a lower bound for the $2$-dimensional Riesz transform on the sphere. Given a finite Borel measure $\nu$ in $\R^3$, $h\in L^2(\nu)$ and $x\in\R^3$, one defines the $2$-dimensional Riesz transform of $h$ as
$$R_\nu(h)(x)=\lim_{\epsilon\searrow0}\int_{|x-y|>\epsilon}
\frac{x-y}{|x-y|^3}\,h(y)\,d\nu(y),$$
whenever the limit makes sense. It is well known that $R_\nu: L^2(\nu)\to L^2(\nu)^3$ is a bounded operator for 2-dimensional uniformly rectifiable AD regular measures $\nu$ in $\R^3$ (see \cite{DS2} for a deep study on this subject). In particular, $R_\nu: L^2(\nu)\to L^2(\nu)^3$ is a bounded operator when $\nu$ is the surface measure of a bounded Lipschitz domain. This means that 
$$\|R_\nu(h)\|_{L^2(\nu)^3}\leq C\|h\|_{L^2(\nu)}$$ for some constant $C>0$ and all $h\in L^2(\nu)$. 
Much less is said about lower $L^2$-bounds for the Riesz transform. However, in the case of the sphere, from the results in \cite{HMMPT} (see also \cite[equation (4.6.9)]{HMT}) one can easily show that the Riesz transform (multiplied by a suitable constant) is an isometry, providing sharp constants to the above-mentioned inequalities. The following corollary of Theorem \ref{thmineq} yields the sharp constant of the inequality from below on $S^2$.

Set $w(x)=(4\pi|x|^3)^{-1}i\,\widetilde{\sigma}\cdot x$ for $x\in\R^3\setminus\{0\}$, and
\begin{equation*}
W(f)(x)=\lim_{\epsilon\searrow0}\int_{|x-z|>\epsilon} w(x-z)f(z)\,d\sigma(z)\quad\text{for $x\in S^2$ and $f\in L^2(\sigma)^2$.}
\end{equation*}

\begin{coro}\label{riesz}
The following inequalities hold and they are sharp:
\begin{itemize}
\item[$(i)$] $\|f\|_{L^2(\sigma)^2}\leq 2\|W(f)\|_{L^2(\sigma)^2}$ for all $f\in L^2(\sigma)^2$.
\item[$(ii)$] $2\pi\|h\|_{L^2(\sigma)}\leq\|R_\sigma(h)\|_{L^2(\sigma)^3}$ for all real-valued $h\in L^2(\sigma)$.
\end{itemize}
\end{coro}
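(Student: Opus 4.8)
The plan is to obtain both inequalities as limiting cases of Theorem \ref{thmineq}. For part $(i)$, I would let $m>0$ be fixed and take $a\to 0$ (or, more conveniently, treat the bound as a family indexed by $a$ and optimize). The point is that when $\lambda\to\infty$ the operator $1/\lambda+(m+a)K^a$ degenerates to $(m+a)K^a$, but this has no bounded inverse; instead the cleanest route is to send $m\searrow 0$ so that $\sqrt{m^2-a^2}$ is replaced by a purely imaginary quantity — but since the statement of Corollary \ref{riesz} involves only the kernel $w(x)=(4\pi|x|^3)^{-1}i\,\widetilde\sigma\cdot x$, which is exactly $w^a$ with the exponential and the $\sqrt{m^2-a^2}|x|$ terms removed, the honest approach is: observe that $W=W^a$ when $m^2-a^2=0$, i.e. formally at the massless threshold. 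So I would redo the eigenvalue computation of Lemma \ref{sphere l3} directly for the kernel $w$ (equivalently, pass to the limit $m^2-a^2\to 0^+$ in Lemma \ref{sphere l3}$(ii)$): then $d_{j\pm1/2}\to$ the corresponding values of $\|k^0(\cdot-e_3)\|$-type integrals which remain bounded, while $(m^2-a^2)d_{j+1/2}d_{j-1/2}\to 0$, hence $|p_{j\pm1/2}|^2\to 1/4$ for \emph{every} $j$. Thus $W$ diagonalizes on the basis $\psi_{j\pm1/2}^{m_j}$ with all eigenvalues of modulus exactly $1/2$, which immediately gives $\|W(f)\|_{L^2(\sigma)^2}=\tfrac12\|f\|_{L^2(\sigma)^2}$ — an identity, sharper than and implying $(i)$, with equality for every $f$.

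Alternatively, and staying strictly within the stated results, I would apply the inequality \eqref{ineq} with $a=0$, divide through by $\lambda$ (equivalently let $\lambda\to\infty$): the term $\tfrac{\delta}{2M}\int (1/\lambda+mK^0)((\widetilde\sigma\cdot N)f)\cdot\overline{(\widetilde\sigma\cdot N)f}\,d\sigma$ and the behavior of $(1/\lambda+mK^0)^{-1}$ are controlled because $K^0\geq 0$ is bounded and compact, and upon optimizing $\delta$ one is led (using $mK^0\to 0$ in the relevant quadratic form as one also sends $m\searrow 0$, which is legitimate since the whole analysis of $K^a,W^a$ is uniform for $a$ in compact subsets and $m$ small by Lemma \ref{spec l1}) to $\|f\|_{L^2(\sigma)^2}^2\leq \tfrac{1}{M^2}\|W^0(f)\|_{L^2(\sigma)^2}^2$ with $M\to 1/2$, i.e. $\|f\|_{L^2(\sigma)^2}\leq 2\|W(f)\|_{L^2(\sigma)^2}$. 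Sharpness follows from the equality case of Theorem \ref{thmineq}: the extremizers $\psi_{j_0\pm1/2}^{m_{j_0}}$ realize equality, and in the limit $j_0$ can be taken arbitrarily large (since $|p_{j\pm1/2}|\to 1/2=M$), so the constant $2$ cannot be improved.

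For part $(ii)$, I would connect $W$ with the Riesz transform $R_\sigma$. Writing $w(x)=\tfrac{i}{4\pi}\,\widetilde\sigma\cdot\tfrac{x}{|x|^3}$, for a scalar $h\in L^2(\sigma)$ we have $W(h\,\xi)=\tfrac{i}{4\pi}\,(\widetilde\sigma\cdot R_\sigma(h))\,\xi$ acting on a constant spinor $\xi\in\C^2$ in the appropriate sense; more precisely, since $\widetilde\sigma\cdot v$ for $v\in\R^3$ satisfies $(\widetilde\sigma\cdot v)^*(\widetilde\sigma\cdot v)=|v|^2 I_2$ (because $\sigma_j\sigma_k+\sigma_k\sigma_j=2\delta_{jk}$), applying $(i)$ to a function of the form $f(z)=h(z)\xi$ with $|\xi|=1$ constant gives $\|h\|_{L^2(\sigma)}=\|f\|_{L^2(\sigma)^2}\leq 2\|W(f)\|_{L^2(\sigma)^2}=2\cdot\tfrac{1}{4\pi}\|R_\sigma(h)\|_{L^2(\sigma)^3}=\tfrac{1}{2\pi}\|R_\sigma(h)\|_{L^2(\sigma)^3}$ for real-valued $h$, which is exactly $(ii)$. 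One must be slightly careful that $W$ maps $L^2(\sigma)^2$ to $L^2(\sigma)^2$ and the identification $W(h\xi)=\tfrac{i}{4\pi}(\widetilde\sigma\cdot R_\sigma(h))\xi$ is valid $\sigma$-a.e. in the principal-value sense — this is routine given that $R_\sigma$ is already known to be $L^2$-bounded for Lipschitz surface measures. Sharpness of $(ii)$ is inherited from sharpness of $(i)$: taking $f=\psi_{j_0\pm1/2}^{m_{j_0}}$ does not in general have the product form $h\xi$, but one can instead use the known fact (from \cite{HMMPT}, \cite[(4.6.9)]{HMT}) that $2\pi R_\sigma$ is an isometry on the relevant subspace, or more self-containedly note that equality propagates through the Clifford-algebra identity because $(\widetilde\sigma\cdot R_\sigma(h))$ again has all singular values equal when $h$ ranges over eigenfunctions.

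The main obstacle I anticipate is the passage to the massless limit $m^2-a^2\to 0$ cleanly: one must check that the eigenvalue expansions in Lemma \ref{sphere l3} and the boundedness of $K^a,W^a$ are uniform enough to justify interchanging the limit with the sum over $(j,m_j)$, and that the principal-value operator $W$ (with the bare kernel $w$) is genuinely the strong limit of $W^a$. Since Lemma \ref{spec l1} already gives uniform $L^2(\sigma)$-bounds and the difference of kernels $w^a-w$ is weakly singular (bounded by $C|x|^{-1}$ near the origin, with the bound uniform for $m$ small), this limit interchange is legitimate, but it is the one place where a careful (if standard) argument is required rather than a one-line invocation of the preceding results.
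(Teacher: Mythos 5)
Your proposal is correct, and the second (``alternatively'') route is essentially the paper's proof: set $a=0$ in Theorem~\ref{thmineq}, send $m\searrow 0$ while using the uniform eigenvalue bounds from Lemma~\ref{sphere l3} and the operator-norm convergence $\|K-K^0\|,\|W-W^0\|=O(m)$ to pass to the limit, then optimize over $\delta$. One small correction there: the paper does not let $\lambda\to\infty$; it fixes $\lambda=1$ and takes only $m\to 0$. Sending $\lambda\to\infty$ with $m$ fixed is actually dangerous, since $1/\lambda+mK^0\to mK^0$, which is compact and hence not invertible; the reason the paper's choice works is precisely that the constant $1/\lambda$ survives the limit while the compact perturbation $mK^a$ vanishes. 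Your first paragraph is a variant the paper does not carry out explicitly: diagonalize $W$ directly on $\psi^{m_j}_{j\pm1/2}$ and observe that $|p_{j\pm1/2}|^2=1/4-(m^2-a^2)d_{j+1/2}d_{j-1/2}\to 1/4$ for every $j$, so $\|W(f)\|=\tfrac12\|f\|$ is actually an \emph{equality} on all of $L^2(\sigma)^2$. That is a stronger conclusion, makes the sharpness of both $(i)$ and $(ii)$ immediate (equality for every $f$, resp.\ every $h$), and is consistent with the fact recalled in the paper that $2\pi$ times the Riesz transform on $S^2$ is an isometry. The price is that you must either redo the kernel computation of Lemma~\ref{sphere l3} for the bare kernel $w$ or justify passing the eigenvalue formula to the limit $m^2-a^2\to 0^+$; you flag this correctly. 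For $(ii)$, writing $f=h\xi$ and using $(\widetilde\sigma\cdot v)^*(\widetilde\sigma\cdot v)=|v|^2 I_2$ is the same computation the paper does with $\xi=\binom{1}{0}$.
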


\begin{proof}
Set $a=0$ and $\lambda=1$ in Theorem \ref{thmineq}. Given $f\in L^2(\sigma)^2$, (\ref{ineq}) yields
\begin{equation}\label{sphere c1 eq1}
\begin{split}
\int_{S^2} |f|^2\,d\sigma
&\leq \frac{1}{2M\delta}\int_{S^2} \left(1+mK^0\right)^{-1}
(W^0( f))\cdot\overline{W^0(f)}\,d\sigma\\ 
&\quad+\frac{\delta}{2M}\int_{S^2} \left(1+mK^0\right)((\widetilde\sigma\cdot N)f)\cdot\overline{(\widetilde\sigma\cdot N) f}\,d\sigma,
\end{split}
\end{equation}
where $M=\min_j|p_{j\pm1/2}|$.
Notice that, by Lemma \ref{sphere l3}, $d_{j\pm1/2}$ are uniformly bounded and  $|p_{j\pm 1/2}|\to1/2$ uniformly in $j$ when $m\to0$.
In particular, $M\to1/2$ when $m\to0$. Recall that $K^0$ and $W^0$ are defiend by means of the convolution kernels
\begin{equation*}
k^0(x)=\frac{e^{-m|x|}}{4\pi|x|}\,I_2
\quad\text{and}\quad
w^0(x)=\frac{e^{-m|x|}}{4\pi|x|^3}
\left(1+m|x|\right)\,i\,\widetilde{\sigma}\cdot x.
\end{equation*}
We define
$k(x)=(4\pi|x|)^{-1}I_2$ for $x\in\R^3\setminus\{0\}$, and
\begin{equation*}
K(f)(x)=\int k(x-z)f(z)\,d\sigma(z)\quad\text{for $x\in S^2$.} 
\end{equation*}
That $K$ and $W$ are bounded operators in $L^2(\sigma)^2$ follows essentially as in the case of $K^0$ and $W^0$. Moreover, it is not hard to show that 
\begin{equation}\label{pirris}
\begin{split}
\|K-K^0\|_{L^2(\sigma)^2\to L^2(\sigma)^2}&=O(m)\quad\text{for $m\to0$,}\\
\|W-W^0\|_{L^2(\sigma)^2\to L^2(\sigma)^2}&=O(m)\quad\text{for $m\to0$.}
\end{split}
\end{equation}
(see the proof of Lemma \ref{spec l1} for a related argument). Roughly speaking, in $S^2$, $K^0$ and $W^0$ are compact perturbations of $K$ and $W$ which depend on $m$ continuously. Therefore, if we take $m\to0$ in (\ref{sphere c1 eq1}) and we use that $(\widetilde\sigma\cdot N)^2=I_2$, we obtain
$$\int_{S^2}|f|^2\,d\sigma\leq\frac{1}{\delta}\int_{S^2}|W(f)|^2\,d\sigma+\delta\int_{S^2}|f|^2\,d\sigma$$
for all $\delta>0$.
Minimazing in $\delta$, i.e., taking $\delta=\|W(f)\|_{L^2(\sigma)^2}/\|f\|_{L^2(\sigma)^2}$, we get
\begin{equation}\label{sphere c1 eq2}
\|f\|_{L^2(\sigma)^2}\leq 2\|W(f)\|_{L^2(\sigma)^2},
\end{equation}
which is the inequality in Corollary \ref{riesz}$(i)$.
Corollary \ref{riesz}$(ii)$ follows from (\ref{sphere c1 eq2}) by taking 
$f=\binom{h}{0}$ for any real-valued $h\in L^2(\sigma)$.
That the inequalities are sharp is a consequence of the fact that (\ref{sphere c1 eq1}) is sharp for $\delta$ as in (\ref{delta}). Since $M\to1/2$ and $\delta\to1/2$ for $m\to0$ (recall that we have set $a=0$ and $\lambda=1$), using (\ref{sphere c1 eq1}) and (\ref{pirris}) one can check that (\ref{sphere c1 eq2}) is sharp, and the corollary follows. 
\end{proof}

The following lemma gives a specific criterion based on Proposition \ref{spec p1} to generate eigenvectors of $H+V_\lambda$.
\begin{lema}\label{eigenfunction}
Let $H+V_{\lambda}$ be as in $(\ref{domain})$ with $\Sigma=S^2$.
If $\lambda>0$ and $a\in(-m,m)$ satisfy
\begin{equation}\label{a}
\lambda^2/4-\left((m+a)d_{j\mp 1/2}-(m-a)d_{j\pm 1/2}\right)\lambda=1\quad\text{for some $j$},
\end{equation} 
then, for any $m_j$, $\psi_{j\pm1/2}^{m_j}$ gives rise to an eigenfunction of $H+V_{\lambda}$ with eigenvalue $a$.
\end{lema}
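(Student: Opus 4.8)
The plan is to reduce the statement, via Theorem~\ref{spec t1} and Proposition~\ref{spec p1}, to checking that the hypothesis \eqref{a} forces $\psi_{j\pm1/2}^{m_j}\in\Ker(1/\lambda+C_\sigma^a)$. By Theorem~\ref{spec t1}, $a\in(-m,m)$ is an eigenvalue of $H+V_\lambda$ as soon as $\Ker(1/\lambda+C_\sigma^a)\neq0$, and if $g$ is in that kernel then Proposition~\ref{spec p1} tells us that $\varphi=\Phi(G+g)$ with $G=a\Phi^a(g)$ is the corresponding eigenfunction. So the whole task is to exhibit a nonzero element of $\Ker(1/\lambda+C_\sigma^a)$ built from $\psi_{j\pm1/2}^{m_j}$.

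The key computation uses the block form \eqref{sphere eq1} of $C_\sigma^a$ together with the eigenrelations of Lemma~\ref{sphere l3}: $K^a\psi_{j\pm1/2}^{m_j}=d_{j\pm1/2}\psi_{j\pm1/2}^{m_j}$ and $W^a\psi_{j\pm1/2}^{m_j}=p_{j\pm1/2}\psi_{j\mp1/2}^{m_j}$ with $p_{j+1/2}=-p_{j-1/2}$. I would look for an eigenvector of $C_\sigma^a$ of the form
$$
g=\begin{pmatrix} s\,\psi_{j-1/2}^{m_j}\\ t\,\psi_{j+1/2}^{m_j}\end{pmatrix}
\quad\text{(or the analogous combination with the roles of }j\pm1/2\text{ swapped).}
$$
Applying \eqref{sphere eq1} and the eigenrelations turns ``$C_\sigma^a g=-g/\lambda$'' into a $2\times2$ linear system in $(s,t)$ whose coefficients involve $(m+a)d_{j-1/2}$, $(m-a)d_{j+1/2}$, $p_{j+1/2}$ and $1/\lambda$ (using $p_{j-1/2}=-p_{j+1/2}$). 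The system has a nontrivial solution exactly when its determinant vanishes, and by Lemma~\ref{sphere l3}$(ii)$ one has $|p_{j+1/2}|^2=\tfrac14-(m^2-a^2)d_{j+1/2}d_{j-1/2}$; substituting this into the determinant condition and clearing denominators should collapse precisely to \eqref{a}, i.e. $\lambda^2/4-\big((m+a)d_{j\mp1/2}-(m-a)d_{j\pm1/2}\big)\lambda=1$. (The two sign choices $\psi_{j+1/2}$ versus $\psi_{j-1/2}$ as ``leading'' component correspond to the $\mp$/$\pm$ in \eqref{a}.) Having found such $(s,t)\neq(0,0)$, the vector $g$ is a nonzero element of $\Ker(1/\lambda+C_\sigma^a)$, and Proposition~\ref{spec p1} produces $\varphi=\Phi(a\Phi^a(g)+g)\in D(T)$ with $T\varphi=a\varphi$.

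The main obstacle I anticipate is purely bookkeeping: getting the $2\times2$ system right (the off-diagonal entries mix $\psi_{j+1/2}$ and $\psi_{j-1/2}$ because $W^a$ swaps them, so one must be careful about which component equation reads off which coefficient), and then verifying that the determinant condition simplifies to exactly \eqref{a} rather than to something equivalent only after using $|p_{j\pm1/2}|^2=\tfrac14-(m^2-a^2)d_{j+1/2}d_{j-1/2}$ — that identity from Lemma~\ref{sphere l3}$(ii)$ is what makes the quadratic in $\lambda$ come out with the stated coefficients. Once that algebra checks out, nothing else is needed: boundedness and self-adjointness of $T$ are already in place from \eqref{domain}, and the fact that $g\neq0$ gives rise to a genuinely nonzero $\varphi$ is guaranteed by Proposition~\ref{spec p1}. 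A final remark worth including is that, since $|p_{j+1/2}|=|p_{j-1/2}|>0$ by Lemma~\ref{sphere l3}, one may equally well record the construction starting from either member of the pair $\psi_{j\pm1/2}^{m_j}$, which is why the statement allows both sign conventions.
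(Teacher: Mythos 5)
Your proposal is correct and follows essentially the same route as the paper: both reduce via Proposition~\ref{spec p1} (equivalently Theorem~\ref{spec t1}) to producing a nonzero $g=\binom{s\,\psi_{j\mp1/2}^{m_j}}{t\,\psi_{j\pm1/2}^{m_j}}\in\Ker(1/\lambda+C_\sigma^a)$, using the block form \eqref{sphere eq1}, the eigenrelations of Lemma~\ref{sphere l3}, $p_{j+1/2}=-p_{j-1/2}$, and $|p_{j\pm1/2}|^2=\tfrac14-(m^2-a^2)d_{j+1/2}d_{j-1/2}$ to collapse the compatibility condition to \eqref{a}. The only cosmetic difference is that the paper builds $g$ constructively (fixing the lower component $h=\psi_{j\pm1/2}^{m_j}$ and solving for the upper one via the invertibility of $1/\lambda+(a+m)K^a$ from Theorem~\ref{thmineq}, then verifying the second scalar equation), whereas you phrase it as a vanishing $2\times2$ determinant; these are the same linear algebra.
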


\begin{proof}
Let $\lambda$, $a$ and $j$ be as in the lemma. Since $\lambda>0$, $1/\lambda+(a+m)K^a$ is invertible by Theorem \ref{thmineq}. Hence we can define
$$g=\left( \begin{array}{rr}  f  \\  h  \end{array} \right)\in L^2(\sigma)^4,\quad\text{where } h=\psi_{j\pm1/2}^{m_{j}}\text{ and }
f=-\left((1/\lambda+(a+m)K^a)^{-1}W^a\right)( h).$$
In particular, we have the relation
\begin{equation}\label{eigenfunction eq2}
(a+m)K^a( f)+W^a( h)=-\frac{1}{\lambda} f.
\end{equation}

Using Lemma \ref{sphere l3}, we have
\begin{equation}\label{eigenfunction eq1}
\begin{split}
-W^a(f)+(m-a)K^a(h)
&=\left(W^a(1/\lambda+(a+m)K^a)^{-1}W^a\right)( h)+(m-a)K^a( h)\\
&=\left(|p_{j\pm1/2}|^2
(1/\lambda+(a+m)d_{j\mp1/2})^{-1}+(m-a)d_{j\pm1/2}\right)h\\
&=\left(\frac{1/4-(m^2-a^2)d_{j+1/2}\,d_{j-1/2}}
{1/\lambda+(a+m)d_{j\mp1/2}}+(m-a)d_{j\pm1/2}\right)h.
\end{split}
\end{equation}
If $\lambda>0$ and $a\in(-m,m)$ satisfy (\ref{a}), then 
$$\frac{1/4-(m^2-a^2)d_{j+1/2}\,d_{j-1/2}}
{1/\lambda+(a+m)d_{j\mp1/2}}+(m-a)d_{j\pm1/2}=\frac{1}{\lambda},$$
which by (\ref{eigenfunction eq1}) implies that 
\begin{equation}\label{eigenfunction eq3}
W^a(f)+(a-m)K^a(h)=-\frac{1}{\lambda}h.
\end{equation}
Finally, combining (\ref{sphere eq1}), (\ref{eigenfunction eq2}) and (\ref{eigenfunction eq3}), we obtain
\begin{equation*}
\left.C_\sigma^a(g)=
\left( \begin{array}{rr}  
(a+m)K^a( f)+W^a( h)\\
W^a( f)+(a-m)K^a( h)
\end{array} \right)
=-\frac{1}{\lambda}\,g,
\right.
\end{equation*}
which means that  $g\in\Ker(1/\lambda+C_\sigma^a)$. Following Proposition \ref{spec p1} (see also Theorem \ref{spec t1}), if we set $G=a\Phi^a(g)$ and $\varphi=\Phi(G+g)$, then $(H+V_{\lambda})(\varphi)=a\varphi$. The lemma is proved.
\end{proof}

For the case $\lambda<0$, one can develop results analogous to Theorem \ref{thmineq} and Lemma \ref{eigenfunction} by repeating the arguments involved in the proofs but by using the invertibility of $1/\lambda-(m-a)K^a$ instead of $1/\lambda+(a+m)K^a$. We leave the details for the reader.

\subsection{Further comments}\label{further com}
 \subsubsection{Existence of eigenfunctions}
From Lemma \ref{spec l1} we have that $C=\sup_{a\in(-m,m)}\|C_\sigma^a\|<\infty$, and in Theorem \ref{spec t1} we proved that if 
\begin{equation}\label{pepa2}
|\lambda|\not\in\left[C^{-1},
4C\right]
\end{equation} 
then $H+V_\lambda$ has no eigenvalues in $(-m,m)$ (recall that $C\geq1/2$ by (\ref{spec t1 eq1})). Furthermore, in Remark \ref{lambda-a} we showed that, for any $a\in(-m,m)$, there exists some $\lambda$ such that $H+V_\lambda$ has $a$ as an eigenvalue. From these results, it is not clear what can be said positively about the set of $\lambda$'s for which there exist an eigenvalue of $H+V_\lambda$. Thanks to Lemma \ref{eigenfunction}, we can give a bit more of information in the case of the sphere, but first we need to do some computations. 

Lemma \ref{sphere l3}$(i)$ gives a precise value for $d_0$ in terms of $m>0$ and $a\in(-m,m)$, that is,
\begin{equation}\label{d0bis eq}
d_0=\frac{1-e^{-2\sqrt{m^2-a^2}}}{2\sqrt{m^2-a^2}}.
\end{equation}
Following a similar argument, we are going to prove that
\begin{equation}\label{d1 eq}
d_1=\frac{1}{2\sqrt{m^2-a^2}}\left(1-\frac{1}{m^2-a^2}+\left(1+\frac{1}{\sqrt{m^2-a^2}}\right)^2e^{-2\sqrt{m^2-a^2}}\right),
\end{equation}
where $d_1$ corresponds to $d_{j+1/2}$ with $j=1/2$. From (\ref{uncert eq1}) and (\ref{d0 eq}), we have 
\begin{equation*}
\psi^{1/2}_1(x)=\left(\widetilde\sigma\cdot x\right)\psi^{1/2}_0
=c\left(\widetilde\sigma\cdot x\right)\left(\begin{array}{c} 1\\
0 \end{array}\right)= c \left(\begin{array}{c} x_3\\
x_1+ix_2 \end{array}\right).
\end{equation*}
Therefore, if we identify the matrix $k^a$ with its scalar version and we set $e_3=(0,0,1)$, Lemma \ref{sphere l3}$(i)$ yields
\begin{equation}\label{d1 eq1}
d_1c\left(\begin{array}{c} 1\\0 \end{array}\right)
=d_1\psi_1^{1/2}(e_3)=K^a\left(\psi_1^{1/2}\right)(e_3)
=c\left(\begin{array}{c} \int k^a(x-e_3)x_3\,d\sigma(x)\\
\int k^a(x-e_3)(x_1+ix_2)\,d\sigma(x) \end{array}\right).
\end{equation}
That $\int k^a(x-e_3)(x_1+ix_2)\,d\sigma(x)=0$ follows from (\ref{d1 eq1}), but it can be also verified using the change of variables (\ref{polar}) and noting that the resulting integrals contain a $\cos \theta$ or a $\sin\theta$, which integrated on $[0, 2\pi]$ vanish. On the other hand, using (\ref{polar}),
\begin{equation}\label{d1 eq2}
\begin{split}
\int k^a(x-e_3)x_3&\,d\sigma(x)
=\frac{1}{2}\int_0^\pi\frac{e^{-\sqrt{2(m^2-a^2)(1-\cos\varphi)}}}{\sqrt{2(1-\cos\varphi)}}\cos\varphi\sin\varphi\,d\varphi\\
&= \frac{1}{2\sqrt{m^2-a^2}}\left(1-\frac{1}{m^2-a^2}+\left(1+\frac{1}{\sqrt{m^2-a^2}}\right)^2e^{-2\sqrt{m^2-a^2}}\right),
\end{split}
\end{equation}
where we used the change of variables $\varphi\to\sqrt{2(1-\cos\varphi)}$ and integration by parts in the last equality above. Combining (\ref{d1 eq1}) and (\ref{d1 eq2}), we get (\ref{d1 eq}), as desired. Recall that Lemma \ref{sphere l3}$(i)$ states that $d_{j\pm1/2}\geq0$ for al $j$. It is an exercise to check directly from (\ref{d1 eq}) that $d_1\geq0$ for all $a\in(-m,m)$.

We now turn to Lemma \ref{eigenfunction}, which can be used to provide eigenfunctions of $H+V_\lambda$ under some assumptions on $\lambda>0$ and $a\in(-m,m)$. For $j=1/2$, (\ref{a}) reads as
\begin{eqnarray}
\lambda^2/4-\left((m+a)d_{0}-(m-a)d_1\right)\lambda=1,\label{pepaa}\\
\lambda^2/4-\left((m+a)d_1-(m-a)d_0\right)\lambda=1.\label{pepab}
\end{eqnarray}
Using (\ref{d0bis eq}) and (\ref{d1 eq}), it is not difficult to see that if $\lambda$ is very big or very small, then  (\ref{pepaa}) and (\ref{pepab}) do not hold for any $a\in(-m,m)$, so Lemma \ref{eigenfunction} can not be used. This agrees with the above-mentioned result on non-existence of eigenvalues given by (\ref{pepa2}). 

Let us take $m=1$ for simplicity. Figure \ref{fig1} shows the set of points $(a,\lambda)\in(-1,1)\times[0,\infty)$ such that (\ref{pepaa}) and (\ref{pepab}) hold. We see that for any $a\in(-1,1)$ we can take a $\lambda$ for which (\ref{pepaa}) holds, hence there exists an eigenfunction with eigenvalue $a$, by Lemma \ref{eigenfunction}. This agrees with Remark \ref{lambda-a}. 

However, from Figure \ref{fig1} we also see that for any $\lambda$ in some interval there exists an $a\in(-1,1)$ such that (\ref{pepaa}) or (\ref{pepab}) hold. More precisely, using the computer one can show that
if $\lambda\in(-4/3+\sqrt{2}\sqrt{26}/3,4+\sqrt{10}\sqrt{2})$ then there exists $a\in(-1,1)$ such that (\ref{pepaa}) holds, and if $\lambda\in(-4+\sqrt{10}\sqrt{2},4/3+\sqrt{2}\sqrt{26}/3)$ then there exists $a\in(-1,1)$ such that (\ref{pepab}) holds. In particular, if $\lambda\in(-4+\sqrt{10}\sqrt{2},4+\sqrt{10}\sqrt{2})$ then there exists $a\in(-1,1)$ such that either (\ref{pepaa}) or (\ref{pepab}) hold. By Lemma \ref{eigenfunction}, this means that the set of $\lambda$'s for which there exists an eigenfunction of $H+V_\lambda$ contains the interval $(-4+\sqrt{10}\sqrt{2},4+\sqrt{10}\sqrt{2})$ (recall that we have set $m=1$ for these calculations). 
\begin{figure}[ht]
\begin{center}
\scalebox{0.34}{\includegraphics{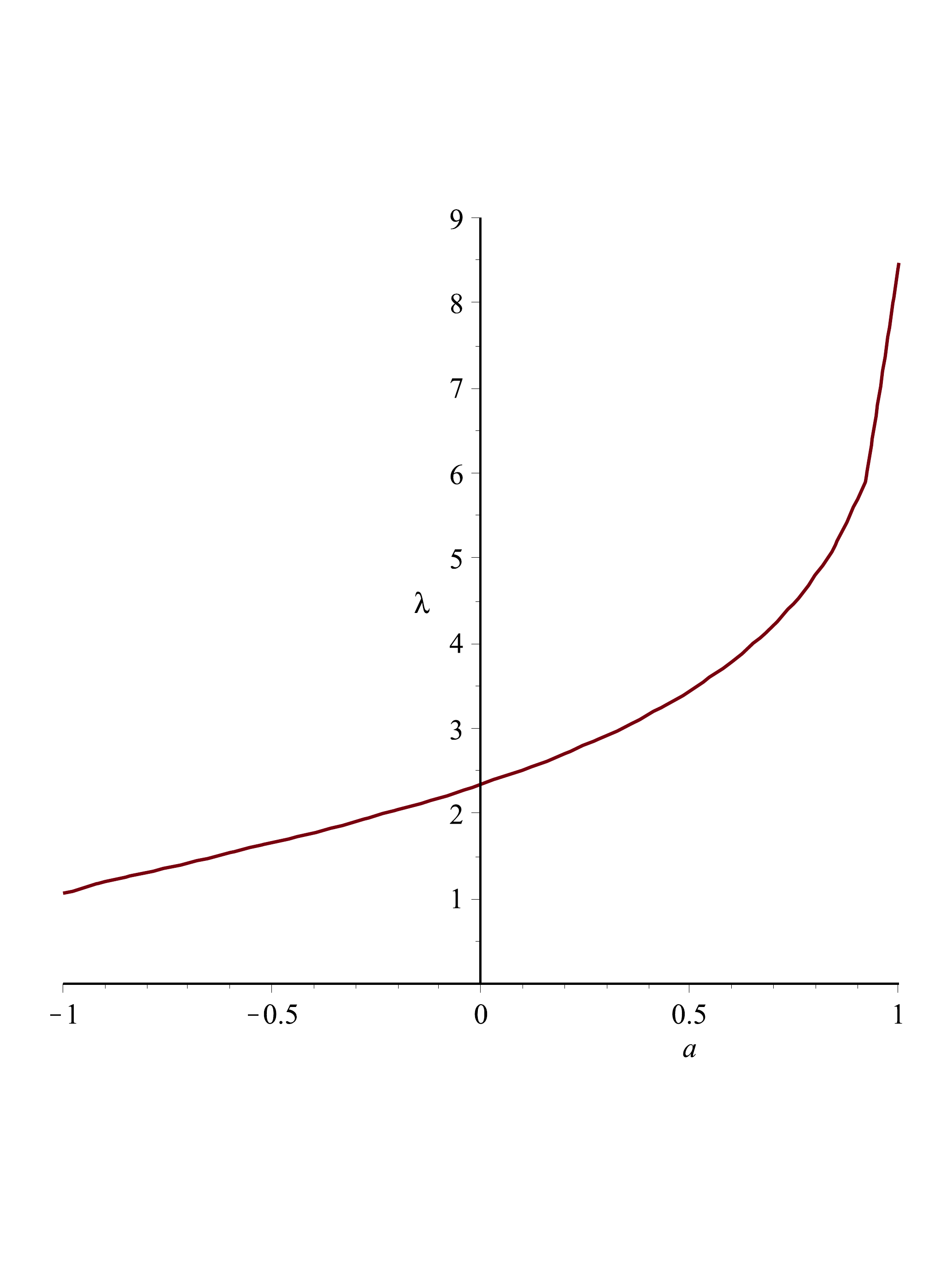}}
\scalebox{0.34}{\includegraphics{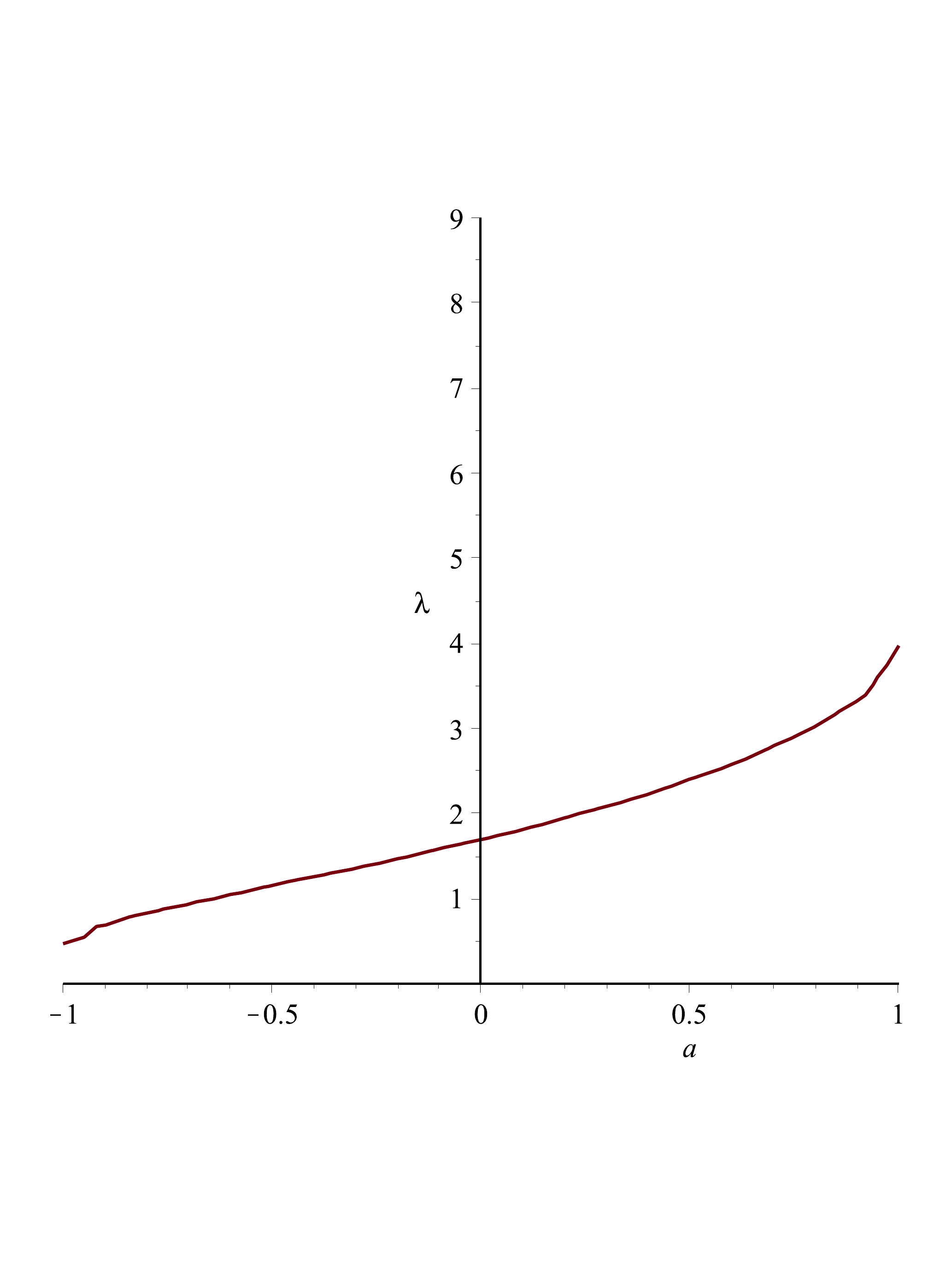}}
\caption{\label{fig1} The curve on the left shows the set of points $$(a,\lambda)\in(-1,1)\times[0,\infty)$$ such that (\ref{pepaa}) holds, and the curve on the right corresponds to (\ref{pepab}). 
The range of admissible $\lambda$'s for the curve on the left is $$(-4/3+\sqrt{2}\sqrt{26}/3,4+\sqrt{10}\sqrt{2})
\approx(1.0703,8.4721),$$ and the range of $\lambda$'s for the curve on the right  is $$(-4+\sqrt{10}\sqrt{2},4/3+\sqrt{2}\sqrt{26}/3)
\approx(0.4721,3.7370).$$}
\end{center}
\end{figure}

\subsubsection{Minimizers and eigenfunctions}\label{delib}
The combination of Theorem \ref{thmineq} and Lemma \ref{eigenfunction} yields an interesting relation between the minimizing functions of (\ref{ineq}) and some eigenfunctions of $H+V_\lambda$. More precisely, let $j_0$ be such that 
$M=\min_j|p_{j\pm1/2}|=|p_{j_0\pm1/2}|$ (recall that $|p_{j+1/2}|=|p_{j-1/2}|$ for all $j$).
Assume that the given $\lambda>0$ and $a\in(-m,m)$ satisfy 
\begin{equation}\label{a1}
\lambda^2/4-\left((m+a)d_{j_0-1/2}-(m-a)d_{j_0+ 1/2}\right)\lambda=1,
\end{equation}
i.e., the election of the first sign in (\ref{a}) holds for this particular $j_0$. Then Lemma \ref{eigenfunction} shows that 
$\psi_{j_0+1/2}^{m_{j_0}}$ gives rise to an eigenfunction of $H+V_{\lambda}$ with eigenvalue $a$, for any $m_{j_0}$.

Fix $\delta$ as in (\ref{delta}), but we chose the first sign on the possible definitions of $\delta$, i.e.,
\begin{equation*}\label{delta1}
\delta=(1/\lambda+(m+a)d_{j_0-1/2})^{-1}M.
\end{equation*}
Then the functions $\psi_{j_0+1/2}^{m_{j_0}}$ are minimizers of (\ref{ineq}), that is, they attain the equality in (\ref{ineq}).

Once $\lambda$, $a$ and $\delta$ are fixed depending on $j_0$, let $J$ be set of $j$'s such that $M=|p_{j\pm1/2}|$ and that one sign election in (\ref{delta}) is satisfied for $j$. In particular, $j_0\in J$. Given $j\in J$, since 
$0<M=|p_{j_0\pm1/2}|=|p_{j\pm1/2}|$ and both $j_0$ and $j$ satisfy (\ref{delta}), we easily deduce that either 
\begin{equation}\label{cases}
d_{j_0\pm1/2}=d_{j\pm1/2}\quad\text{or}\quad
d_{j_0\pm1/2}=d_{j\mp1/2}.
\end{equation}
An inspection of the proof of Theorem \ref{thmineq} shows that, 
\begin{itemize}
\item if $d_{j_0\pm1/2}=d_{j\pm1/2}$ then $\psi_{j+1/2}^{m_{j}}$ are minimizers of (\ref{ineq}), and
\item if $d_{j_0\pm1/2}=d_{j\mp1/2}$, then $\psi_{j-1/2}^{m_{j}}$ are minimizers of (\ref{ineq}).
\end{itemize}
Actually, because of the orthogonality, it can be seen that any minimizer of (\ref{ineq}) must be a linear combination of these functions indexed by $j\in J$ and by a choose of the sign in $j\pm1/2$ depending on (\ref{cases}).
Similarly, for any $j\in J$, (\ref{cases}) and (\ref{a1}) show that $(\ref{a})$ holds for a suitable election of the sign in $j\pm1/2$, and the corresponding functions $\psi_{j\pm1/2}^{m_{j}}$ give rise to eigenfunctions of $H+V_{\lambda}$ with eigenvalue $a$.

Combining the above-mentioned arguments we can conclude that, once $\lambda$, $a$, and $\delta$ are properly chosen, then any function which attains the equality in (\ref{ineq}) give rise to an eigenfunction of $H+V_{\lambda}$ with eigenvalue $a$. Roughly speaking, the minimizers of (\ref{ineq}) provide eigenfunctions of $H+V_{\lambda}$.

\subsubsection{An open question and consequences of a positive answer}
From Lemma \ref{sphere l3}(i), we know that $0\leq d_{j\pm1/2}\leq d_0$ for all $j=1/2, 3/2, 5/2, \ldots$ and $\lim_{j\to\infty}d_{j\pm1/2}=0$, so it is to be expected that the following question has a positive answer. 
\begin{ques}\label{ques}
Is it true that $d_{j+1/2}d_{j-1/2}<d_1d_0$ for all $j=3/2, 5/2, 7/2\ldots$?
\end{ques}
If so, from Lemma \ref{sphere l3}$(ii)$ we see that the minimum in the definition of $M$ in (\ref{ineq}) would be attained only at one particular $j$, namely $j=1/2$. Actually,  $M$ could be calculated explicitely using (\ref{d0bis eq}) and (\ref{d1 eq}), that is,
\begin{equation}\label{M}
M=\frac{1}{2\sqrt{m^2-a^2}}
-\frac{1}{2}\left(1+\frac{1}{\sqrt{m^2-a^2}}\right)
e^{-2\sqrt{m^2-a^2}}.
\end{equation}
The same could be said about the two possible values of $\delta$ in (\ref{delta}), say
$$\delta_0=(1/\lambda +(m+a)d_0)^{-1}M
\quad\text{and}\quad \delta_1=(1/\lambda +(m+a)d_1)^{-1}M.$$

If Question \ref{ques} has a positive answer, the argument of Section \ref{delib} becomes much more transparent, since in this case $J=\{1/2\}$. Furthermore, it would yield the following result: {\em let $a\in(-m,m)$ and $\lambda>0$.
Then, for any $f\in L^2(\sigma)^2$, 
\begin{equation}\label{ineqbis}
\begin{split}
\int_{S^2} | f|^2\,d\sigma
&\leq \frac{1/\lambda +(m+a)d_0}{2M^2}\int_{S^2} \left(1/\lambda+(m+a)K^a\right)^{-1}
(W^a( f))\cdot\overline{W^a(f)}\,d\sigma\\ 
&\quad+\frac{1}{2(1/\lambda +(m+a)d_0)}\int_{S^2} \left(1/\lambda+(m+a)K^a\right)((\widetilde\sigma\cdot N)f)
\cdot\overline{(\widetilde\sigma\cdot N) f}\,d\sigma,
\end{split}
\end{equation}
where $M$ is given by $(\ref{M})$.
The equality in $(\ref{ineqbis})$ is only attained at linear combinations of $\psi_1^{l}$ for $l\in\{-1/2,1/2\}$. Moreover, if
\begin{equation}\label{a2}
\frac{\lambda^2}{4}-\left((m+a)d_{0}-(m-a)d_{1}\right)\lambda=1
\quad\text{(see $(\ref{d0bis eq})$ and $(\ref{d1 eq})$)},
\end{equation}
then the minimizers of $(\ref{ineqbis})$ give rise to eigenfunctions of $H+V_\lambda$. 

These conclusions also hold if we exchange the roles of $d_0$ and $d_1$ in $(\ref{ineqbis})$ and $(\ref{a2})$ and we replace $\psi_1^{l}$ by $\psi_0^{l}$ (that is, we exchange the roles of $j+1/2$ and $j-1/2$ for $j=1/2$).}

\section{On the confinement}\label{s confinement}
In this section, we show a criterion on $H+V$ to generate confinement, namely Theorem \ref{conf t1}. This criterion is stated in terms of an algebraic property of certain bounded operators in $L^2(\sigma)^4$. An application to electrostatic and Lorentz scalar shell potentials is also shown. But before, we need some auxiliary lemmata.
\begin{lema}\label{conf l1}
Let $T$ be as in Theorem \ref{pre t1}. Then, $\chi_{\Omega_\pm}\varphi\in D(T)$ for all $\varphi\in D(T)$ if
\begin{equation}\label{conf l1 eq3}
\{C_\sigma(\alpha\cdot N),\Lambda(\alpha\cdot N)\}
=-\big(\Lambda(\alpha\cdot N)\big)^2.
\end{equation}
\end{lema}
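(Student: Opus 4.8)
The plan is to show that truncation by $\chi_{\Omega_+}$ maps $D(T)$ into itself; the claim for $\chi_{\Omega_-}$ then follows for free, since $D(T)$ is a linear subspace and $\chi_{\Omega_-}\varphi=\varphi-\chi_{\Omega_+}\varphi$ $\mu$-a.e. (recall $\mu(\Sigma)=0$). So fix $\varphi=\Phi(G+g)\in D(T)$, with non-tangential boundary value from $\Omega_+$ equal to $\varphi_+=\Phi_\sigma(G)+C_+(g)=(\Lambda+C_+)(g)$, the last equality because $\varphi\in D(T)$. First I would identify the measure datum of $\chi_{\Omega_+}\varphi$. Since $g\sigma$ is supported on $\Sigma$, we have $H\varphi=G$ classically on $\Omega_+\cup\Omega_-$, while $\chi_{\Omega_+}\varphi\equiv0$ on $\Omega_-$; computing $\partial_j(\chi_{\Omega_+}\varphi)=\chi_{\Omega_+}\partial_j\varphi-N_j\varphi_+\,\sigma$ (outward normal convention, $\partial_j\chi_{\Omega_+}=-N_j\sigma$) — equivalently, the Green's identity for the Dirac operator underlying Lemma \ref{l jump}$(i)$ and formula (\ref{spec t2 eq1}) — gives, in the sense of distributions on $\R^3$,
$$H(\chi_{\Omega_+}\varphi)=(\chi_{\Omega_+}G)\,\mu+i(\alpha\cdot N)\varphi_+\,\sigma.$$
Put $G_+=\chi_{\Omega_+}G\in L^2(\mu)^4$ and $g'=i(\alpha\cdot N)\varphi_+\in L^2(\sigma)^4$, so $G_+\mu+g'\sigma\in\XX$. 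Then $\chi_{\Omega_+}\varphi$ and $\Phi(G_++g')$ both lie in $L^2(\mu)^4$ and $H$ sends both to $G_+\mu+g'\sigma$; since $0$ is not in the spectrum of $H$ (recall $m>0$), $H$ is injective on $L^2(\mu)^4$, whence $\chi_{\Omega_+}\varphi=\Phi(G_++g')$.

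Next I would extract the compatibility condition. Taking the non-tangential limit from $\Omega_-$ in $\chi_{\Omega_+}\varphi=\Phi(G_++g')$ and using that $\chi_{\Omega_+}\varphi$ vanishes on $\Omega_-$ gives $0=\Phi_\sigma(G_+)+C_-(g')$, i.e. $\Phi_\sigma(G_+)=-C_-(g')$. Hence $\chi_{\Omega_+}\varphi\in D(T)$ if and only if $-C_-(g')=\Lambda(g')$, that is $(\Lambda+C_-)(g')=0$. Since $g'=i(\alpha\cdot N)(\Lambda+C_+)(g)$, it suffices to prove the operator identity $(\Lambda+C_-)(\alpha\cdot N)(\Lambda+C_+)=0$ on $L^2(\sigma)^4$. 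Substituting $C_\pm=\mp\frac{i}{2}(\alpha\cdot N)+C_\sigma$ from Lemma \ref{l jump}$(i)$ and using $C_\sigma(\alpha\cdot N)C_\sigma=-\frac14(\alpha\cdot N)$ (a consequence of Lemma \ref{l jump}$(ii)$ together with $(\alpha\cdot N)^2=I_4$), a short expansion collapses the identity to
$$\Lambda(\alpha\cdot N)\Lambda+\Lambda(\alpha\cdot N)C_\sigma+C_\sigma(\alpha\cdot N)\Lambda=0.$$
Multiplying on the right by $\alpha\cdot N$ (invertible), this is equivalent to $\big(\Lambda(\alpha\cdot N)\big)^2+\{C_\sigma(\alpha\cdot N),\Lambda(\alpha\cdot N)\}=0$, which is exactly hypothesis (\ref{conf l1 eq3}). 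This gives $\chi_{\Omega_+}\varphi\in D(T)$; the case $\chi_{\Omega_-}\varphi$ is either the mirror computation (with $C_+$ and $C_-$ interchanged, which yields the same condition) or, more simply, $\chi_{\Omega_-}\varphi=\varphi-\chi_{\Omega_+}\varphi$.

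The genuinely delicate part is the first step rather than the last: one must see correctly that cutting $\varphi$ by $\chi_{\Omega_+}$ creates exactly one new singular layer on $\Sigma$, carried by $g'=i(\alpha\cdot N)\varphi_+$, and that the membership condition $\Phi_\sigma(G_+)=\Lambda(g')$ reduces — through the clean relation $\Phi_\sigma(G_+)=-C_-(g')$ read off from the boundary behavior of $\Phi(G_++g')$ — to a single algebraic identity on $L^2(\sigma)^4$. Once that reduction is in place, the remaining manipulation with the relations of Lemma \ref{l jump} is routine. One should also make sure the Green's identity of the first step is applied with enough regularity, but for $\varphi=\Phi(G+g)$ the non-tangential boundary values and the integration by parts are precisely those established in \cite{AMV}.
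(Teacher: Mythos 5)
Your proof is correct and follows essentially the same route as the paper: you identify the new singular layer $g'=i(\alpha\cdot N)\varphi_+$ created by truncation (the paper's formula for $H(\chi_{\Omega_\pm}\varphi)$), reduce the membership $\chi_{\Omega_+}\varphi\in D(T)$ to the single algebraic condition $(\Lambda+C_-)(\alpha\cdot N)(\Lambda+C_+)=0$ on $L^2(\sigma)^4$, and collapse it to $(\ref{conf l1 eq3})$ via Lemma~\ref{l jump}. The only cosmetic difference is that you read the compatibility relation off the vanishing of $(\chi_{\Omega_+}\varphi)_-$ directly, whereas the paper first records the analogous identity for $\Phi(G)$ (its equation~$(\ref{conf l1 eq2})$) and then substitutes; the two computations are equivalent, and your explicit invocation of the injectivity of $H$ on $L^2(\mu)^4$ (for $m>0$) is exactly what underlies the paper's step as well.
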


\begin{proof}
If $\psi$ is a function which is regular in $\Sigma^c$ and $\psi_\pm$ denote the boundary values of $\psi$ on $\Sigma$ when we approach from $\Omega_\pm$, then
\begin{equation}\label{conf l1 eq1}
H(\psi)=\chi_{\Sigma^c} H(\psi)\mu-i(\alpha\cdot N)(\psi_--\psi_+)\sigma
\end{equation}
in the sense of distributions. The proof of this formula, which follows essentially by Stoke's theorem, is an easy exercise left for the reader.

For any $G\in L^2(\mu)^4$, since $H(\Phi(G))=G$ in the sense of distributions, by (\ref{conf l1 eq1}) we have
\begin{equation*}
H(\chi_{\Omega_{\pm}}\Phi(G))=\chi_{\Omega_{\pm}}G\mu\pm i(\alpha\cdot N)\Phi_\sigma(G)\sigma,
\end{equation*}
which implies that 
$\chi_{\Omega_{\pm}}\Phi(G)=\Phi(\chi_{\Omega_{\pm}}G)\pm i\Phi((\alpha\cdot N)\Phi_\sigma(G))$. Recall that 
$\chi_{\Omega_{\pm}}\Phi(G)=0$ in $\Omega_\mp$, so the boundary values $(\chi_{\Omega_{\pm}}\Phi(G))_\mp$ vanish on $\Sigma$.  Combining this fact with Lemma \ref{l jump}$(i)$, we obtain
\begin{equation}\label{conf l1 eq2}
\begin{split}
0&=(\chi_{\Omega_{\pm}}\Phi(G))_\mp
=\Phi_\sigma(\chi_{\Omega_{\pm}}G)\pm iC_\mp((\alpha\cdot N)\Phi_\sigma(G))\\
&=\Phi_\sigma(\chi_{\Omega_{\pm}}G)+\left(-\frac{1}{2}
\pm iC_\sigma(\alpha\cdot N)\right)\Phi_\sigma(G).
\end{split}
\end{equation}

Given $\varphi=\Phi(G+g)\in D(T)$, let $\varphi_{\pm}=\Phi_\sigma(G)+C_{\pm}(g)$ denote the boundary values of $\varphi$ on $\Sigma$.  Since $H(\varphi)=G$ in $\Sigma^c$ and $\Phi_\sigma(G)=\Lambda(g)$, using (\ref{conf l1 eq1}) and Lemma \ref{l jump}$(i)$ we obtain
\begin{equation}\label{conf l1 eq4}
\begin{split}
H(\chi_{\Omega_{\pm}}\varphi)
&=\chi_{\Omega_\pm}G\mu
\pm i(\alpha\cdot N)\big(\Phi_\sigma(G)+C_{\pm}(g)\big)\sigma\\
&=\chi_{\Omega_\pm}G\mu+
\left(\frac{1}{2}\pm i(\alpha\cdot N)(\Lambda+C_\sigma)\right)(g)\sigma
=\chi_{\Omega_\pm}G\mu+f_\pm\sigma,
\end{split}
\end{equation}
where $f_\pm=\big(\frac{1}{2}\pm i(\alpha\cdot N)(\Lambda+C_\sigma)\big)(g)$. This implies that
$\chi_{\Omega_{\pm}}\varphi=\Phi(\chi_{\Omega_\pm}G+f_\pm)$. Hence, $\chi_{\Omega_{\pm}}\varphi\in D(T)$ if and only if 
$\Phi_\sigma(\chi_{\Omega_\pm}G)=\Lambda(f_\pm)$ which, by (\ref{conf l1 eq2}), the definition of $f_\pm$ and that $\Phi_\sigma(G)=\Lambda(g)$, is equivalent to $C_\sigma(\alpha\cdot N)\Lambda(g)=-\Lambda (\alpha\cdot N)(\Lambda+C_\sigma)(g)$. Therefore, 
$\chi_{\Omega_{\pm}}\varphi\in D(T)$ for all $\varphi\in D(T)$ if
$$C_\sigma(\alpha\cdot N)\Lambda+\Lambda (\alpha\cdot N)C_\sigma=-\Lambda (\alpha\cdot N)\Lambda,$$
which is equivalent to $(\ref{conf l1 eq3})$; the lemma is proved. Note that, since $D(T)$ is a vector space, in the statement of the lemma one only needs to require that $\chi_{\Omega_{+}}\varphi\in D(T)$ for all $\varphi\in D(T)$.
\end{proof}

\begin{remark}\label{rem ult}
From the last part of the proof of Lemma \ref{conf l1}, we actually see that $\chi_{\Omega_{\pm}}\varphi\in D(T)$ for all $\varphi\in D(T)$ if and only if $(\ref{conf l1 eq3})$ holds on the set of functions  $g\in L^2(\sigma)^4$ such that there exists $G\in L^2(\mu)^4$ with $\Phi(G+g)\in D(T)$.
\end{remark}

The following lemma is quite standard (see \cite[Section V]{Dittrich}, for example), but we give a proof of it for the sake of completeness.

\begin{lema}\label{conf l2}
Let $T$ be as in Theorem \ref{pre t1} and assume that $T$ is self-adjoint. 
Define the projections $E_\pm:L^2(\mu)^4\to L^2(\mu)^4$ by $E_\pm(\varphi)=\chi_{\Omega_\pm}\varphi$. Then the following are equivalent:
\begin{itemize}
\item[$(i)$] $E_\pm (D(T))\subset D(T)$,
\item[$(ii)$] $E_\pm T\subset T E_\pm$,
\item[$(iii)$] $L_\pm^2(\mu)^4=\{\varphi\in L^2(\mu)^4: \supp(\varphi)\subset\overline{\Omega_\pm}\}$ is invariant under 
$e^{-iTt}$ for all $t\in\R$. 
\end{itemize}
\end{lema}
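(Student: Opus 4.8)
The plan is to prove the three equivalences in the standard cyclic fashion, $(ii)\Rightarrow(iii)\Rightarrow(i)\Rightarrow(ii)$, using the self-adjointness of $T$ together with Stone's theorem.

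First I would record that $E_\pm$ are bounded, self-adjoint, idempotent operators on $L^2(\mu)^4$ with $E_++E_-=\Id$ (since $\sigma$ is a null set for $\mu$), and that $L^2_\pm(\mu)^4=\Ran(E_\pm)$. For $(ii)\Rightarrow(iii)$: assuming $E_\pm T\subset TE_\pm$, a direct computation shows that $E_\pm$ commutes with the resolvent $(T-z)^{-1}$ for $z\notin\R$. Indeed, for $\varphi\in L^2(\mu)^4$ put $\psi=(T-z)^{-1}\varphi\in D(T)$; then $E_\pm\psi\in D(T)$ and $(T-z)E_\pm\psi=E_\pm(T-z)\psi=E_\pm\varphi$, so $E_\pm(T-z)^{-1}=(T-z)^{-1}E_\pm$. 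Since $e^{-iTt}$ is a strong limit of polynomials (or rational functions) in the resolvent — e.g. via the functional calculus or the formula $e^{-iTt}=\mathrm{s\mbox{-}lim}_{n\to\infty}(\Id+\tfrac{it}{n}T)^{-n}$ — it follows that $E_\pm e^{-iTt}=e^{-iTt}E_\pm$, which is precisely the invariance of $L^2_\pm(\mu)^4$ in $(iii)$.

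Next, $(iii)\Rightarrow(i)$: if $E_\pm$ commutes with $e^{-iTt}$ for all $t$, then for $\varphi\in D(T)$ we have, by Stone's theorem, that $t\mapsto e^{-iTt}\varphi$ is differentiable at $0$ with derivative $-iT\varphi$; applying the bounded operator $E_\pm$ and using commutativity, $t\mapsto e^{-iTt}E_\pm\varphi$ is also differentiable at $0$, hence $E_\pm\varphi$ lies in the domain of the generator, i.e. $E_\pm\varphi\in D(T)$. This gives $(i)$. Finally $(i)\Rightarrow(ii)$: assuming $E_\pm(D(T))\subset D(T)$, we must show $E_\pm T\varphi=TE_\pm\varphi$ for all $\varphi\in D(T)$. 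Here I would use that $T$ is self-adjoint: for $\varphi,\chi\in D(T)$, using that $E_\pm$ is bounded self-adjoint and preserves $D(T)$,
\begin{equation*}
\langle E_\pm T\varphi,\chi\rangle_\mu=\langle T\varphi,E_\pm\chi\rangle_\mu=\langle\varphi,TE_\pm\chi\rangle_\mu,
\end{equation*}
so it suffices to check $TE_\pm=E_\pm T$ on $D(T)$ by a density/adjoint argument; alternatively, and more cleanly, observe that on $D(T)$ the operator $T$ acts by $(H+V)(\Phi(G+g))=G$, that $E_\pm\varphi=\Phi(\chi_{\Omega_\pm}G+f_\pm)$ with $f_\pm$ as in the proof of Lemma \ref{conf l1} (equation (\ref{conf l1 eq4})), and hence $TE_\pm\varphi=\chi_{\Omega_\pm}G=E_\pm T\varphi$ directly from the definition of the action of $T$ on its domain. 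This last identity is in fact the quickest route and avoids any subtlety.

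The main obstacle is the implication $(ii)\Rightarrow(iii)$ (equivalently $(i)\Rightarrow(iii)$): one must pass from an algebraic commutation at the level of the unbounded operator to commutation with the unitary group, and this genuinely requires self-adjointness of $T$ so that $e^{-iTt}$ is well-defined and the resolvent-commutation argument applies. I would present the resolvent argument carefully, since the naive "differentiate $e^{-iTt}\varphi$" manipulation only works in the direction $(iii)\Rightarrow(i)$. Everything else is routine once the decomposition of $E_\pm\varphi$ from Lemma \ref{conf l1}'s proof is invoked.
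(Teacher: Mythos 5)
Your proof is correct and essentially the same as the paper's: both obtain $(i)\Leftrightarrow(ii)$ from the concrete decomposition $\chi_{\Omega_\pm}\varphi=\Phi(\chi_{\Omega_\pm}G+f_\pm)$ established in the proof of Lemma \ref{conf l1}, and both pass from $(ii)$ to $(iii)$ by showing $E_\pm$ commutes with the resolvent (the paper uses $(1+itT)^{-1}$) and then invoking the exponential formula $e^{-iTt}=\lim_{n\to\infty}\big((1+\tfrac{it}{n}T)^{-1}\big)^n$. One remark worth making: your first attempt at $(i)\Rightarrow(ii)$ ``using that $T$ is self-adjoint'' is not merely circular but unsalvageable as an abstract argument, because the implication ``$E(D(T))\subset D(T)$ for a self-adjoint operator $T$ and an orthogonal projection $E$ implies $ET\subset TE$'' is simply false in general; for $T$ bounded one has $D(T)=L^2(\mu)^4$, so $(i)$ always holds, while $T$ and $E$ need not commute. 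Hence the ``cleaner'' route you switch to, namely computing $TE_\pm\varphi=\chi_{\Omega_\pm}G=E_\pm T\varphi$ directly from the explicit action of $T$ on $D(T)$, is not just quicker, it is the indispensable step, and it is precisely what the paper does. Your Stone's-theorem differentiation argument for $(iii)\Rightarrow(i)$ is a small genuine addition: the paper leaves that direction to the reader.
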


\begin{proof}
Let us prove $(i)\Longrightarrow(ii)$. Since $E_\pm$ are bounded operators, $D(E_\pm T)=D(T)$. By $(i)$,
$$D(TE_\pm)=\{\varphi\in L^2(\mu)^2: E_\pm(\varphi)\in D(T)\}\supset D(T),$$
thus  $D(E_\pm T)\subset D(TE_\pm)$. If $\varphi=\Phi(G+g)\in D(T)$ then $E_\pm T(\varphi)=E_\pm G=\chi_{\Omega_\pm}G$, but from (\ref{conf l1 eq4}) we have seen that $\chi_{\Omega_{\pm}}\varphi=\Phi(\chi_{\Omega_\pm}G+f_\pm)$ for some $f_\pm\in L^2(\sigma)^4$, so
$$TE_\pm(\varphi)=T(\chi_{\Omega_\pm}\varphi)
=(H+V)\Phi(\chi_{\Omega_\pm}G+f_\pm)=\chi_{\Omega_\pm}G.$$
Therefore, $E_\pm T=TE_\pm$ on $D(T)=D(E_\pm T)$, and $(ii)$ is proved. The implication $(ii)\Longrightarrow(i)$ is straightforward.

In order to prove $(ii)\Longleftrightarrow(iii)$, recall the well known fact that, if $T$ is self-adjoint, then $1+itT:D(T)\to L^2(\mu)^4$ is invertible for all $t\in\R$. This assertion can be easily verified using the arguments in the proof of \cite[Theorem VIII.3]{RS}, for example. By definition, $(ii)$ is equivalent to
\begin{equation*}
E_\pm(1+itT)(\varphi)=(1+itT)E_\pm(\varphi)\quad\text{for all }\varphi\in D(E_\pm T)=D(T)
\end{equation*}
which, by writting $\varphi=(1+itT)^{-1}(\psi)$, is further equivalent to
\begin{equation*}
E_\pm (\psi)=(1+itT)E_\pm(1+itT)^{-1}(\psi)\quad\text{for all }\psi\in L^2(\mu)^4.
\end{equation*}
In conclusion, 
\begin{equation}\label{conf l2 eq1}
(ii)\quad\Longleftrightarrow\quad
(1+itT)^{-1}E_\pm=E_\pm(1+itT)^{-1}\quad\text{for all }t\in\R.
\end{equation}

It is well know that, if $u(t)\in\CC(\R;L^2(\mu)^4)$ is a solution of 
\begin{equation*}
\left\{\begin{split}
&\partial_t u(t)+iTu(t)=0,\\
&u(0)=\psi_0\in L^2(\mu)^4,
\end{split}\right.
\end{equation*}
then one can write $$u(t)=e^{-iTt}(\psi_0)
=\lim_{n\to\infty}\left(\left(1+\textstyle{\frac{it}{n}}T\right)^{-1}\right)^n(\psi_0)$$
(see \cite[Theorem 7.9]{Brezis} for a similar result).
Assume that $\psi_0\in L^2_+(\mu)^4\cup  L^2_-(\mu)^4$, so $E_\pm (\psi_0)=\psi_0$. By (\ref{conf l2 eq1}), we have
\begin{equation*}
E_\pm(u(t))=\lim_{n\to\infty}E_\pm\left(\left(1+\textstyle{\frac{it}{n}}T\right)^{-1}\right)^n(\psi_0)
=\lim_{n\to\infty}\left(\left(1+\textstyle{\frac{it}{n}}T\right)^{-1}\right)^nE_\pm(\psi_0)
=u(t).
\end{equation*}
Therefore, we have proved that if $(ii)$ holds then $E_\pm(u(t))=u(t)$ for all $t\in\R$, which is a restatement of $(iii)$. The implication $(iii)\Longrightarrow(ii)$ is left for the reader.
\end{proof}

\begin{teo}\label{conf t1}
Let $T=H+V$ be as in Theorem \ref{pre t1} and assume that $T$ is self-adjoint. Then, $H+V$ makes $\Sigma$ impenetrable for the particles if $(\ref{conf l1 eq3})$ holds.
\end{teo}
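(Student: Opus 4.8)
The plan is to obtain this as an immediate consequence of Lemma~\ref{conf l1} and Lemma~\ref{conf l2}, which between them carry all the real content. First I would make precise what ``$H+V$ makes $\Sigma$ impenetrable for the particles'' means: as explained in the introduction and at the start of Section~\ref{s confinement}, it is the assertion that whenever $u\in\CC(\R;L^2(\mu)^4)$ solves $\partial_t u+i(H+V)u=0$ with $u(\cdot,0)$ supported in $\overline{\Omega_+}$ (resp.\ $\overline{\Omega_-}$), then $u(\cdot,t)$ stays supported in $\overline{\Omega_+}$ (resp.\ $\overline{\Omega_-}$) for every $t\in\R$. Since $T$ is self-adjoint, $u(t)=e^{-iTt}u(0)$, so this is exactly the invariance of the subspaces $L^2_\pm(\mu)^4=\{\varphi\in L^2(\mu)^4:\supp(\varphi)\subset\overline{\Omega_\pm}\}$ under $e^{-iTt}$ for all $t\in\R$, namely condition~$(iii)$ of Lemma~\ref{conf l2}.

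Next I would note that the hypothesis $(\ref{conf l1 eq3})$ is precisely the sufficient condition appearing in Lemma~\ref{conf l1}, so it guarantees $\chi_{\Omega_\pm}\varphi\in D(T)$ for all $\varphi\in D(T)$; with the projections $E_\pm(\varphi)=\chi_{\Omega_\pm}\varphi$ of Lemma~\ref{conf l2} this says $E_\pm(D(T))\subset D(T)$, i.e.\ condition~$(i)$ of that lemma. Finally, since $T$ is assumed self-adjoint, Lemma~\ref{conf l2} gives the equivalence $(i)\Longleftrightarrow(iii)$, and chaining the three steps shows that $(\ref{conf l1 eq3})$ forces $(iii)$, which is the confinement statement.

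There is essentially no genuine obstacle at this stage: the theorem is a repackaging of Lemmas~\ref{conf l1} and~\ref{conf l2}. All the substantive work --- the distributional identity $(\ref{conf l1 eq1})$, the computation that $\chi_{\Omega_\pm}\varphi=\Phi(\chi_{\Omega_\pm}G+f_\pm)$ with $f_\pm=\bigl(\tfrac12\pm i(\alpha\cdot N)(\Lambda+C_\sigma)\bigr)(g)$, and the fact that the Trotter-type approximation $e^{-iTt}=\lim_n\bigl((1+\tfrac{it}{n}T)^{-1}\bigr)^n$ commutes with $E_\pm$ once $(i)$ holds --- has already been carried out in the proofs of those two lemmas, so here it suffices to invoke them and to unwind the definition of confinement.
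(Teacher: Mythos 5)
Your proposal is correct and follows exactly the same route as the paper: unwind the meaning of impenetrability to identify it with condition $(iii)$ of Lemma~\ref{conf l2}, invoke Lemma~\ref{conf l1} to pass from $(\ref{conf l1 eq3})$ to condition $(i)$, and conclude via the equivalence $(i)\Longleftrightarrow(iii)$ supplied by Lemma~\ref{conf l2} under self-adjointness. This is precisely the paper's argument, with the same division of labor between the two lemmas.
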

\begin{proof}
That $H+V$ makes $\Sigma$ impenetrable means that the particles under consideration which are initially confined either in $\Omega_+$ or $\Omega_-$ at time $t=0$, remain confined in $\Omega_+$ or in $\Omega_-$ for all $t\in\R$ under the evolution given by $\partial_t=-i(H+V)$, i.e., that Lemma \ref{conf l2}$(iii)$ holds. Thus,
the theorem is a straightforward application of Lemmata \ref{conf l1} and \ref{conf l2}.
\end{proof}

\subsection{Electrostatic and Lorentz scalar shell potentials}
The following theorem is an application of the confinement criterion stated in Theorem \ref{conf t1} to electrostatic and Lorentz scalar shell potentials.
\begin{teo}\label{conf appl}
Assume that $\Sigma$ is $\CC^2$. Given $\lambda_e,\lambda_s\in\R$ such that $|\lambda_e|\neq|\lambda_s|$, let $T$ be the operator defined by
$D(T)=\big\{\Phi(G+g): G\mu+g\sigma\in\XX,\,\Phi_\sigma(G)=\Lambda(g)\big\}$
and $T=H+V_{es}$ on $D(T)$, where 
$$\Lambda=\frac{\lambda_s\beta-\lambda_e}{\lambda_e^2-\lambda_s^2}\,-C_\sigma,\qquad
V_{es}(\varphi)=\frac{1}{2}\,(\lambda_e+\lambda_s\beta)(\varphi_++\varphi_-)\sigma,$$ and 
$\varphi_\pm=\Phi_\sigma(G)+C_\pm (g)$ for $\varphi=\Phi(G+g)\in D(T)$. 
If $\lambda_e^2-\lambda_s^2\neq4$ then $T$ is self-adjoint. In that case, $T$ makes $\Sigma$ impenetrable if and only if $\lambda_e^2-\lambda_s^2=-4$.
\end{teo}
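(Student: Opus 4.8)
The plan is to prove the two assertions separately, in each case reducing an operator statement on $L^2(\sigma)^4$ to a scalar identity, exactly in the spirit of \cite[Theorem 3.8]{AMV} and Theorem \ref{spec t1}. Throughout write $A=\alpha\cdot N$ and $B=\lambda_e+\lambda_s\beta$. Since $|\lambda_e|\neq|\lambda_s|$, $B$ is invertible with $B^{-1}=(\lambda_e^2-\lambda_s^2)^{-1}(\lambda_e-\lambda_s\beta)$, so $\Lambda=\tfrac{\lambda_s\beta-\lambda_e}{\lambda_e^2-\lambda_s^2}-C_\sigma=-(B^{-1}+C_\sigma)$; as $B^{-1}$ is a Hermitian constant matrix and $C_\sigma$ is self-adjoint, $\Lambda$ is self-adjoint. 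By Theorem \ref{pre t1} and the remark following it, self-adjointness of $T$ will follow once we check that $V_{es}=V$ on $D(T)$ and that $\Lambda$ is semi-Fredholm. The first is immediate: for $\varphi=\Phi(G+g)\in D(T)$, Lemma \ref{l jump}$(i)$ gives $\varphi_++\varphi_-=2\bigl(\Phi_\sigma(G)+C_\sigma(g)\bigr)=2\bigl(\Lambda(g)+C_\sigma(g)\bigr)=-2B^{-1}g$, hence $V_{es}(\varphi)=\tfrac12 B(\varphi_++\varphi_-)\sigma=-g\sigma=V(\varphi)$.

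For the semi-Fredholmness, the $\CC^2$-regularity of $\Sigma$ is used to ensure (as in \cite[Lemma 3.5]{AMV}) that the anticommutators $\{A,C_\sigma\}$ and $\{\beta,C_\sigma\}$ are compact, since the principal singular kernel $\tfrac{i}{4\pi}\alpha\cdot\tfrac{x-z}{|x-z|^3}$ of $C_\sigma$ anticommutes both with $\beta$ and with $\alpha\cdot N$; moreover, by Lemma \ref{l jump}$(ii)$ and the compactness of $\{A,C_\sigma\}$ one gets $C_\sigma^2=\tfrac14 I_4+(\text{compact})$, as in Remark \ref{lambda-a}. From $A^2=I_4$ and $A\beta A=-\beta$ one finds $AB^{-1}A=(\lambda_e^2-\lambda_s^2)^{-1}B$, so $A(B^{-1}+C_\sigma)A\equiv(\lambda_e^2-\lambda_s^2)^{-1}B-C_\sigma$ modulo compacts, and therefore
$$(B^{-1}+C_\sigma)\,A(B^{-1}+C_\sigma)A\;\equiv\;\Bigl(\frac{1}{\lambda_e^2-\lambda_s^2}-\frac14\Bigr)I_4,$$
the cross terms $-B^{-1}C_\sigma+(\lambda_e^2-\lambda_s^2)^{-1}C_\sigma B$ collapsing to the compact operator $(\lambda_e^2-\lambda_s^2)^{-1}\lambda_s\{\beta,C_\sigma\}$. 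When $\lambda_e^2-\lambda_s^2\neq4$ the right-hand side is an invertible scalar, so this product is Fredholm; hence $B^{-1}+C_\sigma$ has closed range of finite codimension and, being self-adjoint, is Fredholm, in particular semi-Fredholm. By the remark after Theorem \ref{pre t1}, $T$ is self-adjoint.

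For the confinement statement, set $\beta_0=\tfrac{\lambda_s\beta-\lambda_e}{\lambda_e^2-\lambda_s^2}=-B^{-1}$, $P=\beta_0 A$, $Q=C_\sigma A$, so $\Lambda A=P-Q$. A direct expansion shows that the criterion $(\ref{conf l1 eq3})$, i.e.\ $\{Q,P-Q\}=-(P-Q)^2$, is equivalent to $P^2=Q^2$. By Lemma \ref{l jump}$(ii)$, $Q^2=-\tfrac14 I_4$, while $A^2=I_4$ and $A\beta A=-\beta$ give $P^2=(\beta_0 A)^2=(\lambda_e^2-\lambda_s^2)^{-1}I_4$; thus $(\ref{conf l1 eq3})$ holds if and only if $\lambda_e^2-\lambda_s^2=-4$. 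If $\lambda_e^2-\lambda_s^2=-4$, Theorem \ref{conf t1} yields that $\Sigma$ is impenetrable. Conversely, assume $T$ is self-adjoint and $\Sigma$ is impenetrable; by Lemma \ref{conf l2}, $E_\pm(D(T))\subset D(T)$, and by the proof of Lemma \ref{conf l1} together with Remark \ref{rem ult}, $(\ref{conf l1 eq3})$ must hold on the set of $g$'s occurring in $D(T)$. Since $(\ref{conf l1 eq3})$ reduces to $\bigl(\tfrac{1}{\lambda_e^2-\lambda_s^2}+\tfrac14\bigr)g=0$ and $D(T)$ contains elements $\Phi(G+g)$ with $g\neq0$, we conclude $\lambda_e^2-\lambda_s^2=-4$.

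The main obstacle is the semi-Fredholmness of $\Lambda$: everything hinges on the algebraic cancellation that makes $(B^{-1}+C_\sigma)A(B^{-1}+C_\sigma)A$ a scalar modulo compact operators, which in turn relies on the compactness of $\{A,C_\sigma\}$ and $\{\beta,C_\sigma\}$. This is precisely where the hypothesis $\Sigma\in\CC^2$ is needed (compare Remark \ref{r regularity}), and it explains why the theorem is stated for $\CC^2$ surfaces rather than merely Lipschitz ones. A minor secondary point is to justify, in the ``only if'' direction, that $D(T)$ genuinely contains functions with nonzero singular part $g$, so that the scalar identity really forces $\lambda_e^2-\lambda_s^2=-4$.
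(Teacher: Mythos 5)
Your proof is correct and follows essentially the same route as the paper: you verify $V_{es}=V$ on $D(T)$, reduce the Fredholm property of $\Lambda$ to the compactness of $\{\alpha\cdot N,C_\sigma\}$ and $\{\beta,C_\sigma\}$ via a factorization that is equivalent to the paper's $\Lambda_+\Lambda_-=b-K$, and translate the impenetrability criterion $(\ref{conf l1 eq3})$ into the scalar identity $\tfrac14+\tfrac{1}{\lambda_e^2-\lambda_s^2}=0$, exactly as in $(\ref{kpopkosa})$. Your closing observation about the ``only if'' direction --- that one must exhibit some $\Phi(G+g)\in D(T)$ with $g\neq 0$ --- is a fair point which the paper also passes over in silence; it is easily discharged (e.g.\ if $\Ker(\Lambda)\neq\{0\}$ take $G=0$ and $g\in\Ker(\Lambda)$, and if $\Ker(\Lambda)=\{0\}$ then the self-adjoint Fredholm operator $\Lambda$ is invertible and any $G$ with $\Phi_\sigma(G)\neq0$ yields $g=\Lambda^{-1}\Phi_\sigma(G)\neq 0$), but spelling it out is a genuine improvement over the paper's terseness.
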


\begin{proof}
If $\varphi=\Phi(G+g)\in D(T)$, using Lemma \ref{l jump}$(i)$ and that $\Phi_\sigma(G)=\Lambda(g)$ we have
\begin{equation}\label{conf t2 eq1}
\begin{split}
V_{es}(\varphi)
&=(\lambda_e+\lambda_s\beta)(\Phi_\sigma(G)+C_\sigma(g))\sigma\\
&=\frac{1}{\lambda_e^2-\lambda_s^2}(\lambda_e+\lambda_s\beta)(\lambda_s\beta-\lambda_e)(g)\sigma=-g\sigma.
\end{split}
\end{equation}
Thus $V_{es}=V$ on $D(T)$, and so $T$ is as in Theorem \ref{pre t1}.

For proving the self-adjointness of $T$ when $\lambda_e^2-\lambda_s^2\neq4$, we follow the arguments of the proof of \cite[Theorem 3.8]{AMV}. Set
$$\Lambda_\pm=\frac{\lambda_e\mp\lambda_s\beta}{\lambda_e^2-\lambda_s^2}\,\pm C_\sigma,$$
so $\Lambda=-\Lambda_+$,
and observe that $\Lambda_\pm$ are self-adjoint on $L^2(\sigma)^4$ because $\beta$ and $C_\sigma$ also are. Using that $(\alpha\cdot N)^2=\beta^2=I_4$ and Lemma \ref{l jump}$(ii)$ we have
\begin{equation}\label{conf t2 eq2}
\begin{split}
\Lambda_+\Lambda_-
&=\frac{\lambda_e^2}{(\lambda_e^2-\lambda_s^2)^2}
-\left(\frac{\lambda_s}{\lambda_e^2-\lambda_s^2}\,\beta-C_\sigma\right)^2
=\frac{1}{\lambda_e^2-\lambda_s^2}
+\frac{\lambda_s}{\lambda_e^2-\lambda_s^2}\,\{\beta,C_\sigma\}-C_\sigma^2\\
&=\frac{1}{\lambda_e^2-\lambda_s^2}-\frac{1}{4}
+\frac{\lambda_s}{\lambda_e^2-\lambda_s^2}\,\{\beta,C_\sigma\}-C_\sigma(\alpha\cdot N)\{\alpha\cdot N,C_\sigma\}
=b-K,
\end{split}
\end{equation}
where $b=1/(\lambda_e^2-\lambda_s^2)-1/4$ and 
$K=(\lambda_s^2-\lambda_e^2)^{-1}\lambda_s\{\beta,C_\sigma\}+C_\sigma(\alpha\cdot N)\{\alpha\cdot N,C_\sigma\}$.
In \cite[Lemma 3.5]{AMV} we proved that $\{\alpha\cdot N,C_\sigma\}$ is a compact operator in $L^2(\sigma)^4$, and since $\beta$ anticommutes with the $\alpha_j$'s, we easily have
\begin{equation*}
\{\beta,C_\sigma\}g(x)=\frac{m}{2\pi}\int_\Sigma\frac{e^{-m|x-z|}}{|x-z|}\,g(z)\,d\sigma(z).
\end{equation*}
Thus $\{\beta,C_\sigma\}$ is a compact operator by \cite[Proposition 3.11]{Folland} and hence $K$ is also compact. 

If $\lambda_e^2-\lambda_s^2\neq4$ then $b\neq0$ and Fredholm's theorem applies to $b-K$ (see, \cite[Theorem 0.38$(c)$]{Folland}). If for example $\lambda_e\neq0$, using (\ref{conf t2 eq2}) we can follow the proof of \cite[Lemma 3.7]{AMV} to show that $\Lambda$ has closed range. Moreover, as we did in the first part of the proof of \cite[Theorem 3.8]{AMV}, Fredholm's theorem also shows that $\{\Phi(h): h\in\Ker(\Lambda)\}$ is closed, we omit the details.

In any case, that $\Ran(\Lambda)$ and $\{{\Phi(h)}:\,h\in\Ker(\Lambda)\}$ are closed for all $\lambda_e^2-\lambda_s^2\neq4$ follows by (\ref{conf t2 eq2}), Fredholm's theorem and \cite[Theorem 1.46$(ii)$]{Aiena}, so the restriction $\lambda_e\neq0$ is not necessary. 
These properties of $\Lambda$ together with (\ref{conf t2 eq1}) allow us to apply Theorem \ref{pre t1}, which proves that $T$ is self-adjoint for $\lambda_e^2-\lambda_s^2\neq4$. 

Let us finally check the impenetrability condition relative to $T$ and $\Sigma$. By Theorem \ref{conf t1}, $T$ makes $\Sigma$ impenetrable for the particles if $\Lambda$ satisfies $(\ref{conf l1 eq3})$. By a straightforward computation using Lemma \ref{l jump}$(ii)$, that $(\alpha\cdot N)^2=\beta^2=I_4$, and that $(\alpha\cdot N)$ and $\beta$ anticommute, we have
\begin{equation}\label{kpopkosa}
\begin{split}
\{C_\sigma(\alpha\cdot N),\Lambda(\alpha\cdot N)\}
+\big(\Lambda(\alpha\cdot N)\big)^2
=\frac{1}{4}+\frac{1}{\lambda_e^2-\lambda_s^2}.
\end{split}
\end{equation}
Therefore, $\Lambda$ satisfies $(\ref{conf l1 eq3})$ if and only if $\lambda_e^2-\lambda_s^2=-4$, and in such case $\Sigma$ becomes impenetrable. Furthermore, since the right hand side of (\ref{kpopkosa}) is a constant times $I_4$, from Remark \ref{rem ult} and Lemma \ref{conf l2} we actually deduce that $T$ makes $\Sigma$ impenetrable if and only if $\lambda_e^2-\lambda_s^2=-4$.
\end{proof}

We have seen that $H+V_{es}$ makes $\Sigma$ impenetrable if and only if $\lambda_e^2-\lambda_s^2+4=0$, which is precisely equation $(5.1)$ of \cite[Section V]{Dittrich}. Hence, for the potentials $V_{es}$, our results on confinement generalize the ones stated in \cite{Dittrich}  to regular surfaces.


\begin{thebibliography}{AHMTT}

\bibitem{Aiena} P. Aiena, {\em Semi-Fredholm operators, perturbation theory and localized SVEP}, Ediciones IVIC, Ins\-ti\-tu\-to Venezolano de Investigaciones Cient\'ificas, Venezuela (2007), ISBN 978-980-261-084-6.

\bibitem{AMV} N. Arrizabalaga, A. Mas, and L. Vega, {\em Shell interactions for Dirac operators}, accepted for publication in J. Math. Pures et Appl. (2013), 22 pages.

\bibitem{Brezis} H. Brezis, {\em Functional analysis, Sobolev spaces and partial differential equations}, Universitext, Springer, New York (2011).

\bibitem{DS2} G. David and S. Semmes, {\em Analysis of and on uniformly
rectifiable sets,} Mathematical Surveys and Monographs, 38,
American Mathematical Society, Providence, RI (1993).

\bibitem{Posi3} N. C. Dias, A. Posilicano and J. N. Prata, {\em Self-adjoint, globally defined Hamiltonian operators for systems with boundaries,} Comm. Pure Appl. Anal., 10 (2011), no. 6, pp. 1687--1706.

\bibitem{Dittrich} J. Dittrich, P. Exner, and P. Seba, {\em Dirac operators with a spherically symmetric $\delta$-shell interaction}, J. Math. Phys. 30 (1989), pp. 2875--2882.

\bibitem{DDEV} J. Dolbeault, J .Duoandikoetxea, M. J. Esteban,
L. Vega, {\em Hardy-type estimates for Dirac operators,} Ann. Sci.
\'Ecole Norm. Sup. (4) 40 (2007), no. 6, pp. 885--900.

\bibitem{Folland} G. Folland, {\em Introduction to partial differential
equations}, second edition, Princeton Univ. Press, (1995).

\bibitem{HMT} S. Hofmann, M. Mitrea and M. Taylor, {\em Singular integrals and elliptic boundary problems on regular Semmes-Kenig-Toro domains,} Int. Math. Res. Not. (2010), no. 14, pp. 2567--2865.

\bibitem{HMMPT} S. Hofmann, E. Marmolejo-Olea, M. Mitrea, S. P\'erez-Esteva and M. Taylor, {\em Hardy spaces, singular integrals and the geometry of euclidean domains of locally finite perimeter,} Geometric and Functional Analysis (forthcoming).

\bibitem{Mattila-llibre} P. Mattila, {\em Geometry of sets and measures in Euclidean spaces,} Cambridge Stud. Adv. Math. 44, Cambridge Univ. Press, Cambridge (1995).

\bibitem{Posi1} A. Posilicano, {\em Self-adjoint extensions of restrictions,} Oper. Matrices, 2 (2008), pp. 483--506.

\bibitem{Posi2} A. Posilicano and L. Raimondi, {\em Krein's resolvent formula for self-adjoint extensions of symmetric second order elliptic differential operators,} J. Phys. A: Math. Theor., 42 (2009), 015204.

\bibitem{RS} M. Reed and B. Simon, {\em Methods of modern mathematical physics, Vol I: Functional analysis}, revised and enlarged edition, Academic Press (1980).

\bibitem{Rellich} F. Rellich, {\em Uber das asymptotische Verhalten der Losungen von $\Delta u+\lambda u=0$ in unendlichen
Gebieten}, J. Deutsch. Math. Verein., 53 (1943), pp. 57--65.

\bibitem{Rudin} W. Rudin, {\em Functional analysis}, second edition, International Series in Pure and Applied Mathematics (1991).

\bibitem{Stein} E. M. Stein, {\em Singular Integrals and Differentiability Properties of Functions}, Princeton University Press (1970).

\bibitem{Thaller} B. Thaller, {\em The Dirac equation}, Texts and Monographs in Physics, Springer-Verlag, Berlin (1992).

\end{thebibliography}
\end{document}